\def\({\left(}
\def\){\right)}
\def\[{\left[}
\def\]{\right]}
\def\<{\left\langle}
\def\>{\right\rangle}
\def\tensor{\otimes}
\def\qed{\hfill\Box\smallskip}
\newtheorem{theorem}{Theorem}[section]
\newtheorem{lemma}[theorem]{Lemma}
\newtheorem{corollary}[theorem]{Corollary}
\newtheorem{proposition}[theorem]{Proposition}
\newtheorem{remark}[theorem]{Remark}
\DeclareMathOperator{\arccosh}{arccosh}
\DeclareMathOperator{\sign}{sign}
\DeclareMathOperator{\supp}{supp}
\DeclareMathOperator{\Var}{Var}
\DeclareMathOperator{\dist}{dist}
\DeclareMathOperator{\Pois}{Pois}
\DeclareMathOperator{\Pl}{Pl}
\begin{document}

\title[Entropy of Schur--Weyl Measures]
{Entropy of Schur--Weyl Measures}

\author[S. Mkrtchyan]{Sevak Mkrtchyan}
\email{sevakm@andrew.cmu.edu}
\address{Carnegie Mellon University, Department of Mathematical Sciences\\ Pittsburgh, PA}
%\keywords{Asymptotic representation theory; Schur--Weyl duality; Plancherel-type measures; Entropy.}

\begin{abstract}
Relative dimensions of isotypic components of $N$--th order tensor representations of the symmetric group on $n$ letters give a Plancherel--type measure on the space of Young diagrams with $n$ cells and at most $N$ rows. It was conjectured by G. Olshanski that dimensions of isotypic components of tensor representations of finite symmetric groups, after appropriate normalization, converge to a constant with respect to this family of Plancherel--type measures in the limit when $\frac N{\sqrt{n}}$ converges to a constant.
The main result of the paper is the proof of this conjecture.
\end{abstract}

\keywords{Asymptotic representation theory, Schur--Weyl duality, Plancherel measure, Schur--Weyl measure, Vershik--Kerov conjecture}

\maketitle

\tableofcontents

%\newpage

\section{Introduction}
Let $N$ and $n$ be two positive integers, let $S_n$ be the symmetric group on $n$ letters and let $\mathbb{Y}^n$ be the set of Young diagrams with $n$ cells. The finite dimensional irreducible representations of $S_n$ are parametrized by the set $\mathbb{Y}^n$. Given $\lambda\in\mathbb{Y}^n$ let $V_\lambda$ be the irreducible representation of $S_n$ corresponding to the Young diagram $\lambda$ and denote $\dim\lambda=\dim V_\lambda$. 

The $N$-th order tensor representation of $S_n$ is the action of $S_n$ on the tensor product space $(\mathbb{C}^N)^{\otimes n}$ by permuting the factors in the tensor product. We are interested in isotypic components of these representations.

If $V$ is an irreducible subrepresentation of a representation $U$ of a finite group, the isotypic component of $U$ corresponding to $V$ is defined to be the sum of all subrepresentations of $U$ which are isomorphic to $V$. It is easy to show that $U$ decomposes uniquely into a direct sum of its isotypic components. 

Let $\mathbb{Y}_N^n$ denote the set of Young diagrams with $n$ cells and at most $N$ rows. It follows from Schur--Weyl duality \cite{W,FultonHarris} between the symmetric group $S_n$ and the general linear group $GL(N,\mathbb{C})$ that the irreducible representations of $S_n$ which are subrepresentations of the representation $(\mathbb{C}^N)^{\otimes n}$ are exactly the ones which correspond to Young diagrams in the set $\mathbb{Y}_N^n$. Given $\lambda\in\mathbb{Y}_N^n$ let $E_\lambda$ denote the isotypic component of $(\mathbb{C}^N)^{\otimes n}$ corresponding to $V_\lambda$. Decomposing $(\mathbb{C}^N)^{\otimes n}$ into a direct sum of its isotypic components and looking at dimensions, we obtain
\begin{equation*}
N^n=\sum_{\lambda\in\mathbb{Y}_N^n}\dim E_\lambda.
\end{equation*}
Introduce a probability measure on $\mathbb{Y}_N^n$ given by relative dimensions of the corresponding isotypic components:
\begin{equation*}
\mathbb{P}_N^n(\lambda)=\frac{\dim E_\lambda }{N^n}.
\end{equation*}
We will call the measures $\mathbb{P}_N^n(\lambda)$ Schur--Weyl measures.

The main result of this paper is the following theorem on the asymptotics of Schur--Weyl measures, which was conjectured to be true by G. Olshanski:
\begin{theorem}
\label{thm:main}
For any $c>0$, $c\neq 1$ there exists a positive number $H_c$ such that for any $\varepsilon>0$ we have
\begin{equation*}
%\label{eq:main}
\lim_{\substack{n\rightarrow \infty\\N\rightarrow \infty\\\frac{\sqrt{n}}N\rightarrow c}}\mathbb{P}_N^n\left\{\lambda\in\mathbb{Y}^n_N:\left|-\frac{1}{\sqrt{n}}\ln\frac{\dim E_\lambda}{N^n}-H_c\right|<\varepsilon\right\}=1.
\end{equation*}
\end{theorem}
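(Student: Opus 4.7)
My plan is to reduce the log-probability to an explicit functional of the Young diagram and show that it concentrates at a constant under $\mathbb{P}_N^n$. By Schur--Weyl duality, $\dim E_\lambda=\dim V_\lambda\cdot s_\lambda(1^N)$; combining the hook length formula $\dim V_\lambda=n!/\prod h(i,j)$ with the Weyl dimension formula $s_\lambda(1^N)=\prod_{(i,j)\in\lambda}\frac{N+j-i}{h(i,j)}$ yields
\begin{equation*}
-\frac{1}{\sqrt n}\ln\frac{\dim E_\lambda}{N^n}=\frac{1}{\sqrt n}\Bigl[n\ln N-\ln n!+2\!\!\sum_{(i,j)\in\lambda}\!\!\ln h(i,j)-\!\!\sum_{(i,j)\in\lambda}\!\!\ln(N+j-i)\Bigr].
\end{equation*}
Stirling turns the deterministic part $(n\ln N-\ln n!)/\sqrt n$ into $-\tfrac{\sqrt n}{2}\ln n-\sqrt n\ln c+\sqrt n+o(\sqrt n)$, while each of the two lattice sums is individually of order $\sqrt n\ln n$. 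Thus the proof rests on a precise cancellation of these $\sqrt n\ln n$ terms, leaving an $O(\sqrt n)$ remainder with a deterministic limit.

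To make the cancellation explicit I would pass to Kerov's rescaled profile $\omega_\lambda(x)=\tfrac{1}{\sqrt n}w_\lambda(\sqrt n\,x)$, where $w_\lambda$ is the profile of $\lambda$ in the Russian (rotated-by-$45^\circ$) convention, and express the two lattice sums as integral transforms of Kerov's interlacing measure $\tfrac12(|x|-\omega_\lambda)''$. The hook sum becomes a double logarithmic integral of this measure against itself, while the content sum becomes a single integral of $\ln(\sqrt n/N+x)$ against the same measure. Biane's limit shape theorem for Schur--Weyl measures identifies a weak limit $\Omega_c$ of $\omega_\lambda$ in probability; substituting $\Omega_c$ into these integrals and combining with the Stirling term produces a finite constant, which I would take as the definition of $H_c$. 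The hypothesis $c\ne 1$ enters here because at $c=1$ the support of $(|x|-\Omega_c)''$ meets the singularity of the logarithm in the content functional and the resulting integral diverges.

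The hardest step will be converting weak convergence of $\omega_\lambda\to\Omega_c$ into convergence of these singular integrals at the $o(\sqrt n)$ error scale. Two distinct difficulties arise. First, the logarithmic kernel is sensitive to short hooks, i.e.\ to the edges of the diagram; I would need quantitative tail bounds on the length of the first row and first column of $\lambda$ under $\mathbb{P}_N^n$, obtainable through the determinantal structure of Schur--Weyl measures via the Cauchy identity, RSK, and the Lindstr\"om--Gessel--Viennot lemma. Second, the relevant functionals are not continuous in the weak topology, so the limit shape theorem alone does not suffice; I would complement it with a second-order estimate, for instance a variance bound on linear statistics of $\omega_\lambda$ or a concentration inequality for the profile in a stronger norm, in order to control bulk fluctuations. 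Once the edge tails and the bulk fluctuations are both controlled at the required scale, combining these estimates with the cancellation identified above yields the asserted convergence in probability and completes the proof.
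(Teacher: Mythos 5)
Your reduction correctly identifies that $-\frac{1}{\sqrt n}\ln\frac{\dim E_\lambda}{N^n}$ is a precise cancellation of $\sqrt n\ln n$ terms, and your hook/content decomposition parallels the paper's starting point (Proposition~\ref{prop:intFormMeas}, proved in \cite{M2}). But the plan contains a fundamental gap at the step ``substituting $\Omega_c$ into these integrals \ldots produces a finite constant, which I would take as the definition of $H_c$.'' The constant $H_c$ is \emph{not} obtained by evaluating the relevant functional at the limit shape. In the variational form, what survives the macroscopic cancellation is (up to lower-order hook and content correction terms) the rescaled $\tfrac12$-Sobolev norm $\frac{\sqrt n}{8}\|L_\lambda-\Omega_c\|_{1/2}^2$, which vanishes identically at $\lambda$ replaced by $\Omega_c$, yet for a random $\lambda$ under $\mathbb{P}_N^n$ converges to a strictly positive constant determined by the \emph{microscopic} fluctuation statistics. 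Concretely, in the paper $H_c$ (formula~\eqref{eq:Hc}) is built from expectations $\mathbb{E}_{\mathbb{S}(\phi_a)}$ under the discrete sine process; this information lives at lattice scale and is invisible to any functional of the limit shape. Your proposed toolkit---weak convergence of $\omega_\lambda$ plus edge tail bounds plus a variance bound on macroscopic linear statistics---would reproduce the boundedness result (the paper's Theorem~\ref{thm:typDimBounds}, already in \cite{M2}), but it does not determine the limit, because that limit is a genuinely local quantity.

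What is actually needed is exactly what your plan omits: a proof that the local $k$-point correlations of the particle configuration $\mathcal{P}(\lambda)$ converge to the discrete sine kernel throughout the bulk (the paper does this by poissonizing, identifying the resulting measure with a Plancherel-type process on $U(\infty)$, applying the Borodin--Kuan contour-integral kernel, and depoissonizing); and, more delicately, a quantitative bound on the decay of correlations at mesoscopic separations so that one can sum the local contributions into a law of large numbers for the rescaled Sobolev norm (the paper gets this from Okounkov-style integration by parts in the double contour integral, Lemmas~\ref{lem:kerKbyFuncK}--\ref{lem:estKExtremes}, feeding into Propositions~\ref{prop:covK}--\ref{prop:covOfPnN}). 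Your Cauchy/RSK/LGV route to determinantal structure could in principle reach the same kernel (the poissonization is Johansson's Charlier ensemble), so that is a plausible alternative path to the edge bounds; but without the local-scale convergence and decay-of-correlations program the argument cannot identify $H_c$. As a side remark, your explanation of the $c\neq 1$ restriction (a divergent integral) is not the one borne out by the analysis: the difficulty at $c=1$ is that the left edge is a transversal, not tangential, intersection of the curved and linear parts of $\Omega_c$, so the local fluctuation regime there changes (cf.\ \cite{BOPlancherelType}).
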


We obtain an explicit, albeit quite complicated formula \eqref{eq:Hc} for the constants $H_c$.

\subsection{Entropy of the Plancherel measure}
A major inspiration for this paper is a theorem of A. Bufetov on the entropy of the Plancherel measure. The Plancherel measure is the measure on $\mathbb{Y}^n$ defined by
$$
\Pl^n(\lambda)=\frac{(\dim\lambda)^2}{n!}.
$$
The measure $\mathbb{P}^n_N(\lambda)$ can be thought of as an analog of the Plancherel measure for the tensor representations of $S_n$ since in view of Burnside's theorem $\Pl^n(\lambda)$ can be interpreted as the relative dimension of the isotypic component of the regular representation of $S_n$ corresponding to $V_\lambda$. The measure $\mathbb{P}^n_N(\lambda)$ can also be thought of as a deformation of the Plancherel measure, since for fixed $n$, the measures $\mathbb{P}_N^n$ converge pointwise to the Plancherel measure when $N\rightarrow\infty$ (see, for example, \cite[Section 3]{OlshNotes}).

The theorem of A. Bufetov, which was conjectured by Vershik and Kerov, states:
\begin{theorem}[Theorem 1.1, \cite{Bu}]
\label{thm:Buf}
There exists a positive constant $H$ such that for any $\varepsilon>0$ we have
\begin{equation*}
\lim_{n\rightarrow \infty}\Pl^n\left\{\lambda\in\mathbb{Y}^n:\left|-\frac{1}{\sqrt{n}}\ln\frac{(\dim \lambda)^2}{n!}-H\right|<\varepsilon\right\}=1.
\end{equation*}
\end{theorem}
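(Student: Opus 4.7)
My plan is to use the hook-length formula to convert the entropy statement into an asymptotic for an explicit functional of Young diagrams, and then combine the Vershik-Kerov-Logan-Shepp (VKLS) limit shape theorem with a finer concentration argument at the $\sqrt n$ scale.

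Starting from the hook-length formula $\dim\lambda = n!/\prod_{(i,j)\in\lambda} h_\lambda(i,j)$, I rewrite
$$
-\frac{1}{\sqrt n}\ln\frac{(\dim\lambda)^2}{n!} = \frac{2}{\sqrt n}\sum_{(i,j)\in\lambda}\ln h_\lambda(i,j) - \frac{\ln n!}{\sqrt n}.
$$
After the rescaling $\tilde\lambda := \lambda/\sqrt n$, the scaling $h_\lambda(\sqrt n x, \sqrt n y) \approx \sqrt n\, h_{\tilde\lambda}(x,y)$ together with a Riemann-sum approximation give $\sum \ln h_\lambda \approx \tfrac{n\ln n}{2} + n\iint_{\tilde\lambda}\ln h_{\tilde\lambda}(x,y)\,dx\,dy$; combined with Stirling's $\ln n! = n\ln n - n + o(n)$, the $n\ln n$ contributions cancel and the expression reduces to $\sqrt n\bigl(1 + 2\iint_{\tilde\lambda}\ln h_{\tilde\lambda}\bigr) + o(1)$. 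By VKLS, under $\Pl^n$ the rescaled shape $\tilde\lambda$ concentrates around the limit curve $\Omega$ supported on $[-2,2]$, and the variational identity characterizing $\Omega$ forces the macroscopic $\sqrt n$-contribution to vanish on the limit shape; the constant $H$ therefore appears as a sub-leading correction coming from the $O(1/\sqrt n)$-fluctuations of $\tilde\lambda$ around $\Omega$ and from Riemann-sum errors near the boundary.

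The main obstacle is that the signal lies at the fluctuation scale, so the VKLS theorem on its own does not suffice to pin down $H$. The cleanest route is to pass to the Poissonized Plancherel measure $\Pi_{\sqrt n}$, under which the modified Frobenius coordinates form a determinantal point process on $\mathbb Z+\tfrac12$ with the discrete Bessel kernel. In these coordinates $\ln((\dim\lambda)^2/|\lambda|!)$ can be rewritten as a linear statistic of the point process plus a deterministic piece. Concentration then follows from standard variance estimates for linear statistics of determinantal processes (of order $O(\log n)=o(\sqrt n)$ for sufficiently smooth test functions), while the expectation is computed explicitly against the limiting density of the discrete Bessel kernel, producing $H$ as an explicit (strictly positive) integral.

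A standard de-Poissonization argument---exploiting concentration of a Poisson random variable of mean $n$ on an interval of length $O(\sqrt n\log n)$ around $n$---transfers the result from $\Pi_{\sqrt n}$ to $\Pl^n$. The remaining technical work consists of (a) verifying that the Riemann-sum approximation has error $o(\sqrt n)$ with high probability, for which Tracy-Widom-type edge control of $\lambda$ handles the small-hook singularity near $\partial\lambda$, and (b) showing that the resulting $H$ is strictly positive from its explicit integral representation.
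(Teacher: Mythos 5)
Your outline follows the same high-level strategy as Bufetov's proof in \cite{Bu}, which is also the blueprint this paper adapts for its main theorem (Theorem~\ref{thm:main}): pass through a hook-length/variational formula, observe that the entropy lives at the fluctuation scale because the leading $\sqrt n$ term vanishes on the Vershik--Kerov--Logan--Shepp curve, Poissonize to the discrete Bessel determinantal process, establish concentration, de-Poissonize, and identify $H$ via the bulk local statistics (discrete sine kernel).

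The concentration step is where your sketch has a gap. The quantity $\ln\bigl((\dim\lambda)^2/n!\bigr)$ is \emph{not} a linear statistic of the Bessel-kernel point process, contrary to your claim. Its variational decomposition consists of the $\tfrac12$-Sobolev norm of the fluctuation $L_\lambda-\Omega$ (a quadratic form coupling all pairs of sites through a $|s-t|^{-2}$ weight) and the hook correction $\sum_{i,j}\mathfrak m(h_{i,j})=\sum_{k\geq 1}\mathfrak m(k)\,h_k(\lambda)$, where $h_k(\lambda)$ is a sum of two-site occupation indicators at separation $k$. ``Standard variance estimates for linear statistics'' of order $O(\log n)$ therefore do not apply, and the limit-shape theorem gives no control at the fluctuation scale. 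What is missing is the decomposition that both \cite{Bu} and this paper carry out explicitly: split each of these quantities into a short-range (bulk) part -- which converges to a constant because, after de-Poissonization, the local process converges to the discrete sine kernel -- and a long-range (tail) part, shown negligible via quantitative decay estimates on the correlation kernel obtained by steepest descent. This paper develops the required kernel bounds for the Borodin--Kuan kernel in Sections~\ref{sec:boundForCorrKer}--\ref{sec:Tail}; Bufetov does the corresponding work for the discrete Bessel kernel. With that decomposition and those decay estimates in place, the rest of your outline is essentially Bufetov's argument.
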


By analogy to the Shannon-McMillan-Breiman Theorem, Vershik and Kerov have suggested to call the constant $H$ the entropy of the Plancherel measure. See \cite{Bu} for details. By the same analogy, $H_c$ can be thought of as the entropy of the family of measures $\mathbb{P}_N^{\lfloor c^2N^2\rfloor}$.

\subsection{Outline of the paper}
It was proven by P. Biane \cite{Biane2001} that appropriately scaled boundaries of random Young diagrams sampled from $\mathbb{Y}^n_N$ according to the Schur--Weyl measures converge to a limit shape in the limit $n\rightarrow\infty$, $N\rightarrow\infty$ and $\frac{\sqrt{n}}{N}\rightarrow c$ (Theorem \ref{thm:Biane}). An integral formula for the logarithm of the Schur--Weyl measure $\mathbb{P}_N^n(\lambda)$ in terms of the hook lengths and contents of $\lambda$ and the deviation of the boundary of $\lambda$ from the limit shape was obtained in \cite{M2}. In addition, it was shown in \cite{M2} that the limit shape found by Biane is the unique minimizer of this integral, and the quadratic variation was calculated. The starting point of the proof of Theorem \ref{thm:main} is this variational formula (Proposition \ref{prop:intFormMeas}). Section \ref{sec:background} provides the necessary background.

To study the limit of the variational formula it is necessary to understand the local statistical properties of the boundary of Young diagrams under the Schur--Weyl measures. Toward this end, since it is easier to deal with, we first study the local statistics under the Poissonization of the Schur--Weyl measures. The first step of the proof is to show that the Poissonization of the measures $\mathbb{P}_N^n$ with respect to $n$ are Plancherel--type measures associated with certain extreme characters of the infinite dimensional unitary group (Lemma \ref{lem:poissonization}). Borodin and Kuan have proven that these Plancherel--type measures are determinantal point processes, have obtained a contour--integral representation of the correlation kernel and have found limits of the process in various regimes. In Section \ref{sec:poissonization} we present the proof that in the case which is of relevance to this paper this determinantal process converges to the discrete sine--process, and using the depoissonization 
technique of Borodin, Okounkov and Olshanski \cite{BOO} show that in the limit $N\rightarrow\infty$ the local behavior of the boundary of Young diagrams under the Schur--Weyl measures is characterized by the discrete sine--kernel (Proposition \ref{prop:localStatOfPnN}). We also show in Section \ref{sec:poissonization} that the probability of Young diagrams which extend beyond the limit shape at either edge by at least $N^\delta,\ \delta>\frac 13,$ is exponentially small. This statement for the right edge is an immediate corollary of \cite[Theorem 1.7]{JohDisc2001}.

The next step is to obtain upper bounds for the decay of correlations of the boundary of random Young diagrams. Since the contour--integral formula of Borodin and Kuan is not very suitable for such estimates, using a method of A. Okounkov \cite{OkNASA2001} we obtain a different representation of the correlation kernel and use it to obtain various bounds for the correlation kernel of the poissonized measures (Section \ref{sec:boundForCorrKer}). We use these estimates to obtain upper bounds on the decay of correlations (Section \ref{subsec:decayOfCorr}).

We use the bounds on the decay of correlations to show in Section \ref{sec:mainProof} that the weighted sum of the indicator functions of the presence of a local pattern on the boundary of a Young diagram converges to a constant with respect to the Schur--Weyl measures. This allows us to show that all the terms in the variational formula for $\mathbb{P}^n_N(\lambda)$ which can be characterized in terms of short-range patterns converge to constants. 

In Section \ref{sec:Tail} we show that the terms which correspond to long-range interactions converge to $0$ with respect to the Schur--Weyl measures. 

\subsection{Acknowledgements}
I am deeply grateful to Alexander Bufetov for suggesting this problem to me and for numerous useful discussions on the subject. I am very grateful to Grigori Olshanski for helpful discussions. I am very grateful to Alexei Borodin for pointing out the connection to \cite{JohDisc2001}. I am also very grateful to Philippe Biane for bringing \cite{BOPlancherelType} to my attention.

\section{Background}
\label{sec:background}

\subsection{The limit shape of Young diagrams with respect to Schur--Weyl measures}
\label{sec:LimitShapes}

Represent $\lambda=(\lambda_1\geq\lambda_2\geq\ldots\lambda_N)\in\mathbb{Y}^n_N$ where $\lambda_i\in\mathbb{N}$ and $\sum\lambda_i=n$ by its diagram as shown in Figure \ref{fig:Diagram}. The longest row consists of $\lambda_1$ squares of size $1$, the next longest one of $\lambda_2$ such squares, and so on. Note that for $\lambda\in\mathbb{Y}^n_N$ the integer $N$ is not encoded in the diagram of $\lambda$.
\begin{figure}[t]
\centering
\includegraphics[width=5cm]{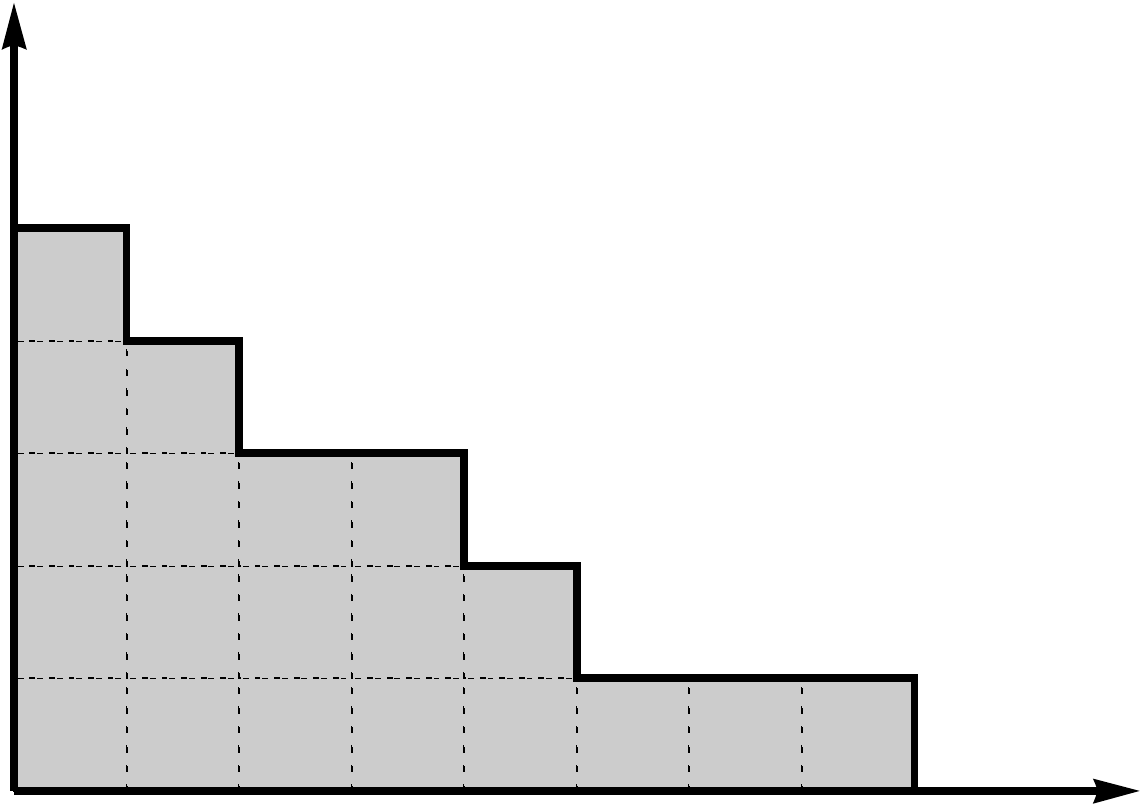}
\caption{\label{fig:Diagram} The Young diagram $\lambda=(8,5,4,2,1,0)\in\mathbb{Y}_{6}^{20}$.}
\end{figure}

Scale down the diagram by $\sqrt{\frac n2}$ in both directions so that the diagram has area $2$ and rotate the scaled diagram by $\frac \pi 4$ radians as in Figure \ref{fig:RotatedDiagram}. Let $L_\lambda(x)$ be the function giving the top boundary of the rotated diagram. Notice that $L_\lambda(x)$ is a piecewise linear function of slopes $\pm 1$ and that $L_\lambda(x)=|x|$ for $x\gg 1$ and $x\leq-\frac{N}{\sqrt{n}}$. 

\begin{figure}[ht]
\includegraphics[width=7cm]{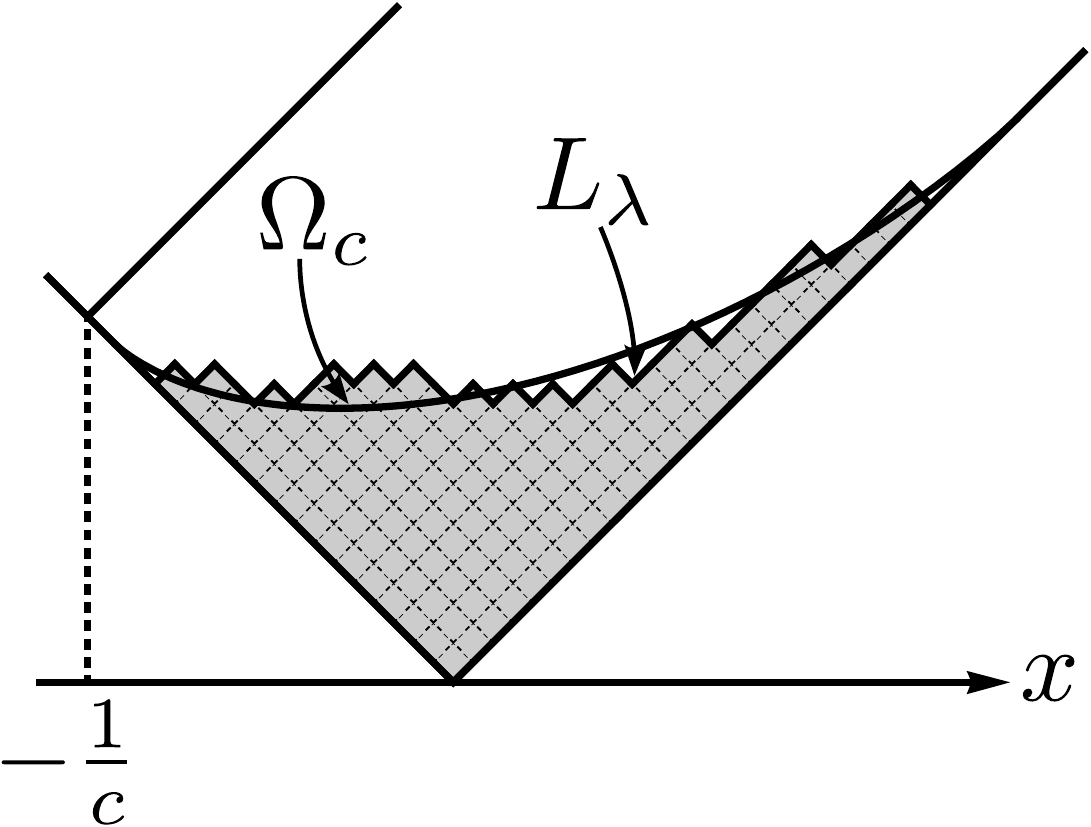}
\caption{\label{fig:RotatedDiagram}A rotated scaled Young diagram.}
\end{figure}

P. Biane \cite{Biane2001} has proven that in the limit $n\rightarrow\infty$, $\sqrt{n}/N\rightarrow c$ the boundary of a random scaled Young diagram sampled from the measure $\mathbb{P}_N^n$ converges in measure to a limit shape. The limit shape $\Omega_c(s)$ is described in the following way. For $x\in[c-2,c+2]$,
% \begin{equation*}
% \Omega_c(x)=\frac{1}{\pi} \(2x \arcsin\(\frac{x + c}{2 \sqrt{1 + x c}}\)
% 		+\frac{2}{c} \arccos\(\frac{2 + x c - c^2}{2 \sqrt{1 + x c}}\)+ \sqrt{4 - (x - c)^2}\),
% \end{equation*}
\begin{multline*}
\Omega_c(x)=\frac{1}{\pi} \(2x \arcsin\(\frac{x + c}{2 \sqrt{1 + x c}}\)
		\right.\\\left.
		+\frac{2}{c} \arccos\(\frac{2 + x c - c^2}{2 \sqrt{1 + x c}}\)+ \sqrt{4 - (x - c)^2}\),
\end{multline*}
otherwise
\begin{equation*}
	\Omega_c(x)=
	\begin{cases}
		|x|,&0<c\leq 1\text{ and }x\notin[c-2,c+2]
		\\|x|,&1<c\text{ and }x\notin[-\frac 1{c},c+2]
		\\x+\frac 2c ,&1<c\text{ and }x\in[-\frac 1{c},c-2]
	\end{cases}.
\end{equation*}

The precise formulation of Biane's theorem is the following law of large numbers for the measures $\mathbb{P}_N^n$.

\begin{theorem}[Theorem 3, \cite{Biane2001}]
\label{thm:Biane}
Let $N=N(n)$ be such that 
\begin{equation*}
\lim_{n\rightarrow\infty}\frac{\sqrt{n}}{N(n)}=c\geq0.
\end{equation*}
For any fixed $\varepsilon>0$ we have
\begin{equation*}
\lim_{n\rightarrow\infty}\mathbb{P}_N^n\{\lambda\in\mathbb{Y}^n_N:  \forall x\in\mathbb{R}, |L_\lambda(x)-\Omega_c(x)|<\varepsilon\}=1.
\end{equation*}
\end{theorem}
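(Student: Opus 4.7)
The plan is to prove uniform convergence of $L_\lambda$ to $\Omega_c$ via the method of moments applied to the Kerov transition measure $m_\lambda$ of a Young diagram, using the Markov--Krein correspondence between continual diagrams and compactly supported probability measures on $\mathbb{R}$. Under the rescaling of $\lambda$ that produces the diagram of area $2$ encoded by $L_\lambda$, the transition measure pushes forward to a probability measure $\widetilde m_\lambda$ supported in a fixed compact interval; since the Markov--Krein transform is continuous in both directions, it suffices to show that for each integer $k\geq 1$ the moment $\int x^k\, d\widetilde m_\lambda(x)$ converges in probability to a deterministic number $m_k(c)$, and then to identify the $m_k(c)$ as the moments of the unique measure whose inverse Markov--Krein transform is $\Omega_c$.

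The first step is to compute the expected moments. By Kerov's polynomial functions formalism, the $k$-th moment of $m_\lambda$ is a polynomial in the quantities $\chi^\lambda(\sigma)/\dim V_\lambda$ taken at $\sigma$ of various cycle types with total size at most $k$, whose leading term is the normalized character at a single $k$-cycle. For the Schur--Weyl measure these expected normalized characters admit an exact evaluation: computing $\mathrm{tr}\bigl(\sigma\mid(\mathbb{C}^N)^{\otimes n}\bigr)$ in two ways (directly, giving $N^{c(\sigma)}$, and via the isotypic decomposition, giving $\sum_\lambda \chi^\lambda(\sigma)\dim M_\lambda$), and using $\mathbb{P}_N^n(\lambda)=\dim V_\lambda\cdot\dim M_\lambda/N^n$, one obtains
\begin{equation*}
\mathbb{E}_{\mathbb{P}_N^n}\!\left[\frac{\chi^\lambda(\sigma)}{\dim V_\lambda}\right]=N^{c(\sigma)-n}.
\end{equation*}
In particular, for a $k$-cycle this expectation equals $N^{1-k}$. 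After the rescaling by powers of $\sqrt n$ intrinsic to the Kerov expansion and using $\sqrt n/N\to c$, the expected $k$-th moment of $\widetilde m_\lambda$ converges to an explicit limit $m_k(c)$.

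The second step is the variance estimate. Applying the same trace identity to a product $\sigma_1\sigma_2$ of cycles gives $\mathbb{E}[\chi^\lambda(\sigma_1\sigma_2)/\dim V_\lambda]=N^{c(\sigma_1\sigma_2)-n}$, which factors exactly as the product of individual expectations when the supports are disjoint. Combined with Kerov--Olshanski approximate factorization of normalized characters, this yields a variance bound of order $1/n$ for each rescaled moment of $\widetilde m_\lambda$, and hence convergence in probability. It remains to recognize $(m_k(c))_{k\geq 1}$ as the moment sequence of a specific compactly supported probability measure on $\mathbb{R}$; one finds, either by a direct recursion on Kerov's formulas or through the free probability interpretation of the tensor product, that this measure is a free convolution of a semicircle distribution with a uniform measure whose parameters depend only on $c$. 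A direct computation of the resulting Stieltjes transform and its inversion then reproduces the piecewise formula for $\Omega_c$ stated in the excerpt.

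The main obstacle is the variance bound with uniformity in $k$: the factorization of expectations on disjoint cycles is exact through the trace identity, but Kerov's expansion of the $k$-th moment introduces pairs of cycles whose supports overlap, and the resulting correction terms must be summed over the growing family of cycle structures contributing to a single moment. Once moment concentration in probability is secured, the deduction of the uniform shape convergence in the theorem is a standard application of the continuity of the Markov--Krein correspondence on probability measures with common compact support, and requires no further input specific to the Schur--Weyl setting.
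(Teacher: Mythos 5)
The paper does not prove Theorem~\ref{thm:Biane}; it is quoted from Biane \cite{Biane2001} and used as an input. Your sketch is in fact a reasonable summary of Biane's own argument: converting shape convergence to moment convergence of the rescaled transition measure via Markov--Krein, computing $\mathbb{E}_{\mathbb{P}_N^n}[\chi^\lambda(\sigma)/\dim V_\lambda]=N^{c(\sigma)-n}$ from $\mathrm{tr}(\sigma\mid(\mathbb{C}^N)^{\otimes n})=N^{c(\sigma)}$, and using exact factorization on disjoint cycles together with Kerov--Biane approximate factorization to control variances. So relative to the present paper this is ``a proof where the paper has none,'' and relative to Biane it is essentially the same method.

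There is, however, a concrete error in the identification of the limit. The expected normalized character on a $k$-cycle being $N^{1-k}$ gives, after the $\sqrt n$ rescaling, limiting free cumulants $R_2=1$, $R_{k+1}=c^{k-1}$ for $k\geq 1$, hence $R$-transform $\frac{z}{1-cz}$ (after centering). Solving $R(G(z))+1/G(z)=z$ gives $G(z)=\frac{c+z-\sqrt{(z-c)^2-4}}{2(1+cz)}$, i.e.\ density $\frac{\sqrt{4-(x-c)^2}}{2\pi(1+cx)}$ on $[c-2,c+2]$. Under the affine map $y=cx+1$ this becomes the Marchenko--Pastur (free Poisson) law with parameter $c^2$; it is not the free additive convolution of a semicircle with a uniform distribution. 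Indeed, subtracting the semicircle's $R$-transform $z$ from $\frac{z}{1-cz}$ leaves $\frac{cz^2}{1-cz}$, which has vanishing second free cumulant but nonvanishing third cumulant, so it is not the $R$-transform of any probability measure, let alone the uniform one. The correct free-probabilistic description is as a (shifted, dilated) free Poisson law, or equivalently as the limit transition measure obtained by Biane's free compression; substituting that gives $\Omega_c$ as stated. With this correction, and with the variance bound spelled out via approximate factorization rather than the exact disjoint-support identity alone (the overlapping-cycle correction terms in the Kerov expansion do need to be controlled, as you rightly flag), your outline is sound.
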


\begin{figure}[t]
\includegraphics[width=12cm]{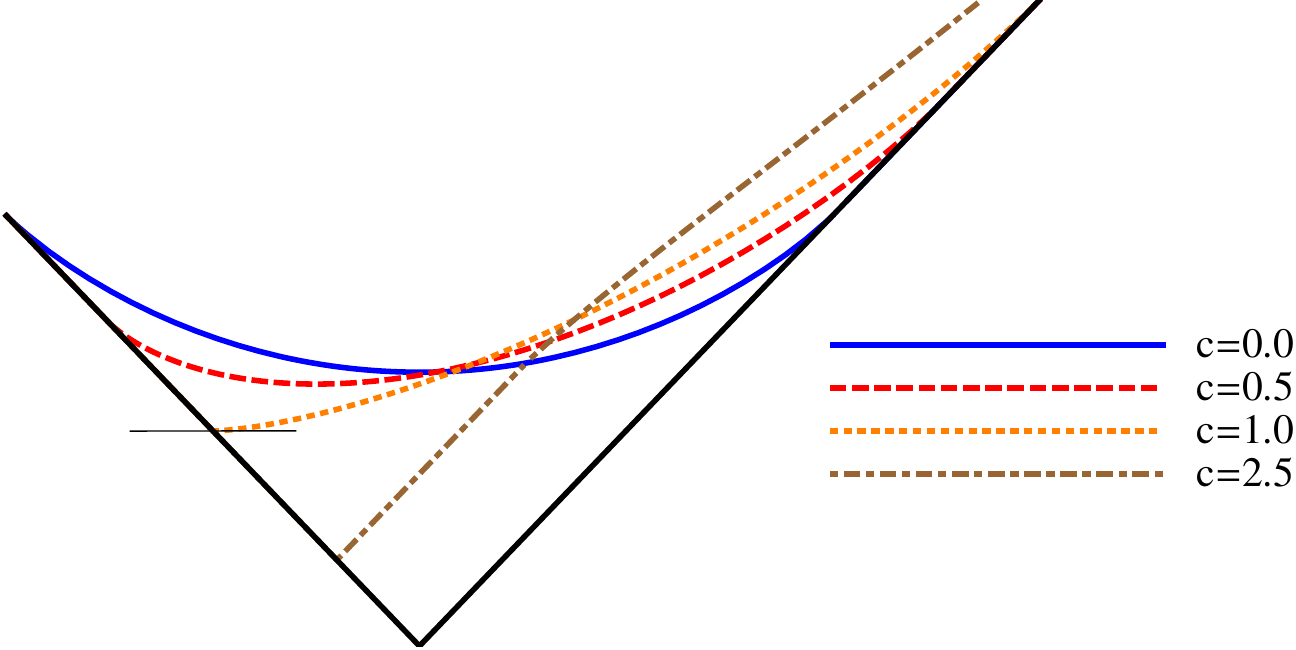}
\caption{\label{fig:OmegaC}Graphs of $\Omega_c(x)$ for $c=0,0.5,1,2.5$.}
\end{figure}

Figure \ref{fig:OmegaC} gives graphs of $\Omega_c(x)$ for several values of $c$. For every $c$ the graph of the function $\Omega_c(x)$ intersects the graph of $|x|$ at two points. All the intersections are tangential except the intersections on the left side for $c\geq 1$. At the left intersection point $\Omega_1(x)$ has slope $0$ from the right, while $\Omega_c(x)$ for $c>1$ has slope $1$ from the right.

{\it Note}: We prove Theorem \ref{thm:main} only in the case $c\neq 1$. The case $c=1$ cannot be treated together with the other cases, because the nature of the fluctuations of $L_\lambda$ near the left intersection point of the graph of $\Omega_1(x)$ with the graph of $|x|$ is different from the other cases. The main reason the nature of the fluctuations changes is the transversal intersection of the nonlinear section of the limit shape with the linear section as indicated in Figure \ref{fig:OmegaC}. The nature of fluctuations near this intersection point has been studied by Borodin and Olshanski \cite{BOPlancherelType}.

Notice that $\Omega_c(x)$ has a rather simple derivative:
\begin{equation}
\label{eq:Omega'}
\Omega_c'(x)=
	\begin{cases}
		\frac 2\pi \arcsin\left(\frac{c+x}{2\sqrt{1+xc}}\right),&x\in[c-2,c+2]
		\\1,&x>c+2\text{, or }1<c\text{ and } x\in[-\frac 1{c},c-2]
		\\-1,&\text{ otherwise }
	\end{cases}.
\end{equation}

The limit shape $\Omega_c(x)$ is a continuous deformation (depending on $c$) of the limit shape of random scaled Young diagrams sampled according to the Plancherel measure, which was found independently and simultaneously by Vershik and Kerov \cite{VK77}, and Logan and Shepp \cite{LoganShepp}. The Vershik-Kerov-Logan-Shepp limit shape is obtained when $c=0$.

\subsection{A variational formula for the measures}
Let $i$ index the rows and $j$ the columns of a Young diagram. For the cell at position $(i,j)$ in a Young diagram $\lambda$ define the numbers $p_{i,j}\in\mathbb{Z}+\frac 12$ and $q_{i,j}\in\mathbb{Z}+\frac 12$ to be $\frac 12$ plus the number of cells to the right of and respectively above the cell as shown in Figure \ref{fig:Frob}. Define the hook length of the cell at position $(i,j)$ to be $h_{i,j}=p_{i,j}+q_{i,j}$ and the content to be $c_{i,j}=j-i$.

\begin{figure}[t]
\includegraphics[width=5cm]{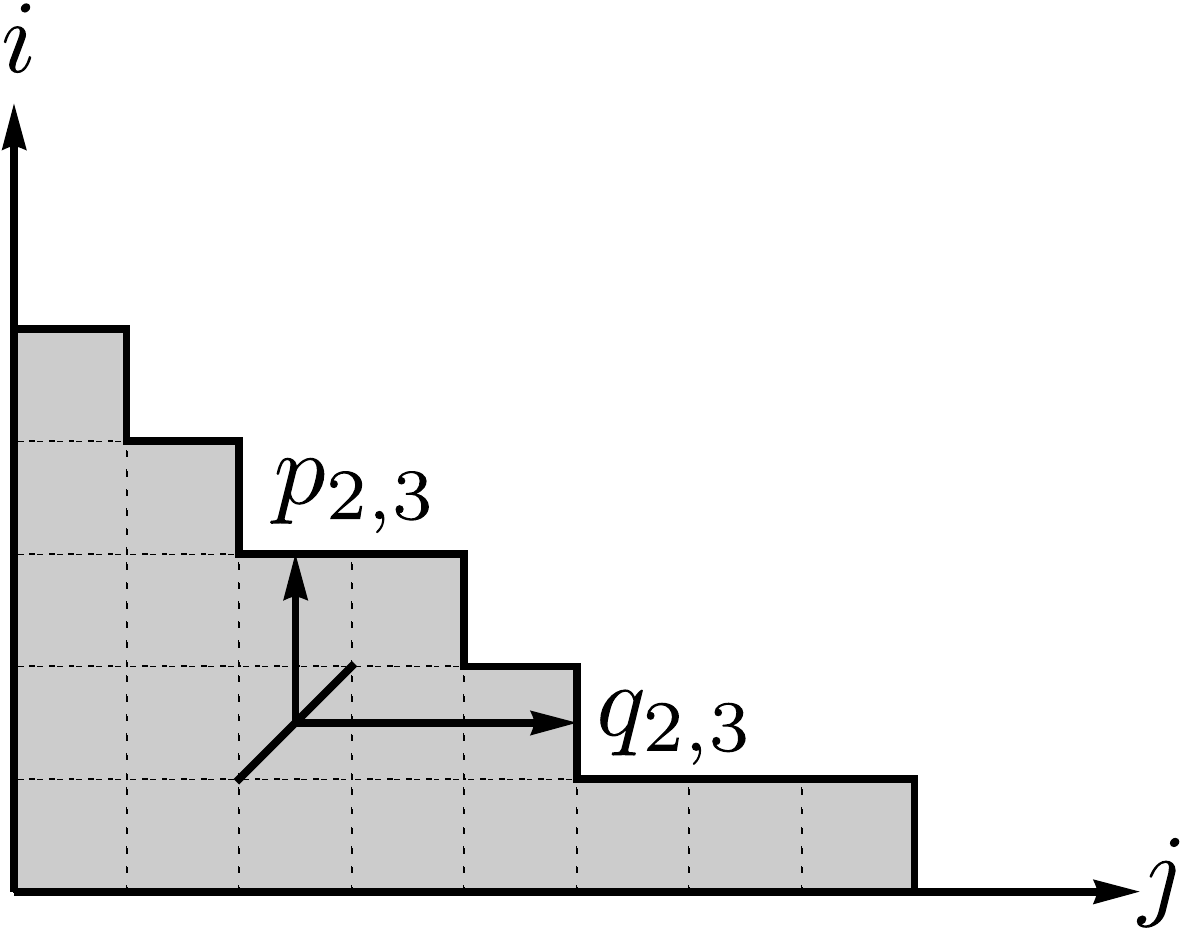}
\caption{\label{fig:Frob} $p_{2,3}=1.5$ and $q_{2,3}=2.5$ for the Young diagram $\lambda=(8,5,4,2,1)$.}
\end{figure}

For a statement $S$ denote
\begin{equation*}
\delta_S=
\begin{cases}
1,&S\text{ is true}\\
0,&S\text{ is false}
\end{cases}.
\end{equation*}

The following variational formula for the measures $\mathbb{P}_N^n$ was obtained in \cite{M2}.
\begin{proposition}[Propositions 2.1 and 3.1,\cite{M2}]
\label{prop:intFormMeas}
Let $c=c_{n,N}=\frac{\sqrt{n}}{N}>0$. We have
\begin{equation}
\label{eq:intFormMeas}
-\frac{\ln \mathbb{P}_N^n(\lambda)}{\sqrt{n}} = \frac {\sqrt{n}}8 \|f_\lambda\|_{\frac 12}^2+\frac{\sqrt{n}}{2}\int_{|x-c|>2}G_c (x)f_\lambda(x)\ dx+\hat{\theta}(\lambda)-\hat{\rho}(\lambda)-\varepsilon_n,
\end{equation}
where $f_\lambda(x)=L_\lambda(x)-\Omega_c(x)$,
\begin{equation*}
\|f\|_{\frac 12}^2=\iint{\(\frac{f(s)-f(t)}{s-t}\)^2}\ ds\ dt
\end{equation*}
is the $\frac 12$--Sobolev norm in the space of piecewise-smooth functions,
% \begin{equation*}
% G_c(x)=\delta_{|x-c|>2}\left(
% \arccosh\left|\frac{x-c}2\right|+\sign(1-c)\arccosh\left|\frac{3c-c^3+(1+c^2)x}{2(1+cx)}\right|\right),
% \end{equation*}
\begin{multline*}
G_c(x)=\\\delta_{|x-c|>2}\left(
\arccosh\left|\frac{x-c}2\right|+\sign(1-c)\arccosh\left|\frac{3c-c^3+(1+c^2)x}{2(1+cx)}\right|\right),
\end{multline*}
%when $|x-c|>2$ and $G_c(x)=0$ otherwise,
\begin{equation*}
\hat{\theta}(\lambda)=\frac {1}{\sqrt{n}}\sum_{i,j}\mathfrak{m}(h_{i,j}),
\end{equation*}
\begin{equation*}
\hat{\rho}(\lambda)=\frac{1}{2\sqrt{n}}\sum_{i,j}\mathfrak{m}(N+c_{i,j}),
\end{equation*}
\begin{equation*}
\mathfrak{m}(x)=\sum_{k=1}^\infty \frac{1}{k(k+1)(2k+1)}\frac 1{x^{2k}},
\end{equation*}
and $\varepsilon_n=o\left(\frac{\ln n}{\sqrt{n}}\right)$ is independent of $\lambda$.
The sums in $\hat{\theta}$ and $\hat{\rho}$ range over all cells of $\lambda$.
\end{proposition}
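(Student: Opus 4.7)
The plan is to derive the variational formula by combining the hook--content expression for $\dim E_\lambda$ with a careful Euler--Maclaurin expansion around the limit shape $\Omega_c$. By Schur--Weyl duality, $\dim E_\lambda=\dim V_\lambda\cdot\dim W_\lambda$, and combining the hook length formula with the hook--content formula for the polynomial $GL(N)$--irreducible yields
\begin{equation*}
\dim E_\lambda=\frac{n!\,\prod_{(i,j)\in\lambda}(N+c_{i,j})}{\prod_{(i,j)\in\lambda}h_{i,j}^{2}},
\end{equation*}
so that
\begin{equation*}
-\ln\mathbb{P}_N^n(\lambda)=-\ln n!+n\ln N+2\sum_{(i,j)}\ln h_{i,j}-\sum_{(i,j)}\ln(N+c_{i,j}).
\end{equation*}
All $\lambda$--dependence is now concentrated in two sums of a logarithm over cells of $\lambda$.

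Next I would decompose each $\ln k$ appearing above into a ``continuous'' piece $\int_{k-1/2}^{k+1/2}\ln x\,dx$ and a discrete remainder whose large--$k$ asymptotic expansion in inverse even powers of $k$ is, up to overall prefactors, the series $\mathfrak{m}(k)$. The remainders aggregated over cells produce exactly $\hat\theta(\lambda)-\hat\rho(\lambda)$. The continuous pieces, via a classical Kerov--type identity, reassemble into a logarithmic energy: $2\sum\ln h_{i,j}$ becomes (modulo boundary terms) a double integral of $-\ln|s-t|$ against the distributional derivatives of the rescaled profile $L_\lambda$, while $\sum\ln(N+c_{i,j})$ becomes a single integral of $L_\lambda$ against a linear external field determined by $N$ and the content range.

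Writing $L_\lambda=\Omega_c+f_\lambda$ and Taylor--expanding the resulting functional to second order, the quadratic part in $f_\lambda$ equals $\tfrac{\sqrt n}{8}\|f_\lambda\|_{1/2}^{2}$, because after rotation by $\pi/4$ and rescaling by $\sqrt{n/2}$ the one--dimensional logarithmic kernel coincides, up to a constant, with the reproducing kernel of the $1/2$--Sobolev norm via the identity $-\iint g'(s)g'(t)\ln|s-t|\,ds\,dt\propto\iint((g(s)-g(t))/(s-t))^{2}\,ds\,dt$. The constant value of the functional at $\Omega_c$, combined with $-\ln n!+n\ln N$ via Stirling, produces a deterministic term of size $o(\ln n/\sqrt n)$ which I absorb into $\varepsilon_n$. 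The linear part in $f_\lambda$ is controlled by the Euler--Lagrange equation for $\Omega_c$: since $\Omega_c$ is the unique minimizer of the continuous functional under the constraint $L\geq|x|$, the linear form must vanish on the strict--inequality region $\{|x-c|<2\}$ and, on the constraint region, reduces to the stated integral of $f_\lambda$ against an explicit function $G_c$.

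The main obstacle is the explicit identification of $G_c$. This amounts to computing the equilibrium logarithmic potential of $\Omega_c$ on the constrained set $\{\Omega_c(x)=|x|\}$, i.e.\ a Hilbert--transform / Riemann--Hilbert computation whose support is a single interval when $c<1$ and two disjoint intervals when $c>1$. The $\arccosh$ integrands in $G_c$ and the factor $\sign(1-c)$ are the natural analytic output of this square--root problem and reflect the change of support topology, which is precisely why the statement excludes $c=1$. The remaining work is careful bookkeeping to confirm that the residual pieces assemble into exactly $\hat\theta-\hat\rho$ together with an $\varepsilon_n=o(\ln n/\sqrt n)$ and no extraneous terms.
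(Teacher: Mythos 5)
This proposition is imported from [M2] (the author's earlier paper) and is not proved in the present paper, so there is no internal proof to compare against; I assess the sketch on its merits. Your global architecture matches what one expects the derivation in [M2] to look like: $\dim E_\lambda=\dim V_\lambda\cdot\dim W_\lambda$ via Schur--Weyl, the hook and hook--content formulas giving $-\ln\mathbb{P}_N^n(\lambda)=-\ln n!+n\ln N+2\sum\ln h_{i,j}-\sum\ln(N+c_{i,j})$, splitting each logarithmic sum into a continuous profile functional plus a pointwise remainder, a Kerov-type transition to a double integral of $L_\lambda$, a second-order expansion around $\Omega_c$ giving the $\|\cdot\|_{1/2}$ energy, and the Euler--Lagrange identification of the first variation with $\int_{|x-c|>2}G_c f_\lambda$, with the $\sign(1-c)$ and the exclusion of $c=1$ coming from the change of support topology.

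The concrete gap is the decomposition you propose for $\ln$. Your midpoint split gives $\ln k-\int_{k-1/2}^{k+1/2}\ln t\,dt=\sum_{j\geq 1}\frac{1}{2j(2j+1)(2k)^{2j}}$, which is \emph{not} $\mathfrak m(k)=\sum_{j\geq 1}\frac{1}{j(j+1)(2j+1)k^{2j}}$, nor is it proportional to it even termwise (the ratios are $\tfrac14,\tfrac{3}{32},\dots$, not constant). The stated $\mathfrak m$ is the remainder of a different approximation, a discrete Laplacian: with $F(x)=\tfrac{x^2}{2}\ln x$ one has $F''(x)=\ln x+\tfrac32$, and the exact identity
\[
F(x+1)+F(x-1)-2F(x)=\int_{-1}^{1}(1-|s|)\,F''(x+s)\,ds,
\qquad
\int_{-1}^{1}(1-|s|)s^{2j}\,ds=\frac{1}{(j+1)(2j+1)},
\]
yields
\[
\ln x=\bigl(F(x+1)+F(x-1)-2F(x)\bigr)-\tfrac32+\tfrac12\,\mathfrak m(x).
\]
Applying this with $x=h_{i,j}$ (prefactor $2$) and $x=N+c_{i,j}$ (prefactor $1$) produces exactly $\sqrt n\,\hat\theta=\sum\mathfrak m(h_{i,j})$ and $\sqrt n\,\hat\rho=\tfrac12\sum\mathfrak m(N+c_{i,j})$, matching the statement. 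This is not cosmetic bookkeeping: it is precisely the discrete second differences of $F$ summed over cells that telescope, via the Vershik--Kerov transition argument, into a bilinear functional of $L_\lambda$ which after rescaling becomes $\tfrac{\sqrt n}{8}\|f_\lambda\|_{1/2}^2$ plus linear terms. A midpoint integral of $\ln$ does not organize itself this way and leaves the wrong remainder series, so your proposed decomposition would not reproduce $\hat\theta-\hat\rho$. The remaining pieces of your sketch (the $1/2$-Sobolev identity, Stirling absorbed into $\varepsilon_n$, the Riemann--Hilbert computation of $G_c$) are plausible but asserted rather than carried out, which is fair for a sketch.
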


Using the varrational formula \eqref{eq:intFormMeas} it is not very hard to prove that the random variables $\frac{1}{\sqrt{n}}\ln\frac{\dim E_\lambda}{N^n}$ are {\it bounded} in measure with respect to $\mathbb{P}^n_N$ \cite{M2}:
\begin{theorem}[Theorem 1.2,\cite{M2}]
\label{thm:typDimBounds}
For any $c>0$ there exist positive numbers $\alpha_c$ and $\beta$ such that if $$\lim\limits_{n\rightarrow \infty}\frac {\sqrt{n}}{N}=c,$$ then
\begin{equation}
\lim_{n\rightarrow \infty}\mathbb{P}_N^n\left\{\lambda:\alpha_c<-\frac{1}{\sqrt{n}}\ln\frac{\dim E_\lambda}{N^n}<\beta\right\}=1.
\end{equation}
\end{theorem}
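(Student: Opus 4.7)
The plan is to analyze each of the four terms in the variational formula \eqref{eq:intFormMeas} from Proposition \ref{prop:intFormMeas}, after restricting to the event $\{\|f_\lambda\|_\infty<\varepsilon_0\}$ on which $f_\lambda = L_\lambda - \Omega_c$ is uniformly small; this event has $\mathbb{P}_N^n$-probability tending to $1$ by Theorem \ref{thm:Biane}.

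For the first two terms, I would exploit that $L_\lambda$ has slopes $\pm 1$ and $\Omega_c$ is Lipschitz with constant at most $1$, so $f_\lambda$ is itself Lipschitz with a controlled constant and supported on an interval of length $O(1/c+1)$. Together with $\|f_\lambda\|_\infty<\varepsilon_0$, splitting the Sobolev double integral into near-diagonal and far-diagonal regions yields $\|f_\lambda\|_{1/2}^2 \leq C\varepsilon_0$; turning this into an $O(1)$ bound on $\sqrt n\,\|f_\lambda\|_{1/2}^2$ requires supplementing Biane's theorem with a quantitative pointwise fluctuation estimate of order $1/\sqrt n$, which can be extracted from a variance bound on $L_\lambda(x)$. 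The integral against $G_c$ is bounded directly: $G_c$ is integrable and $f_\lambda$ is uniformly small with compact support.

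The more substantive estimates are those of $\hat\theta(\lambda)$ and $\hat\rho(\lambda)$, which I would treat as Riemann sums over the profile of $\lambda$. The sum $\hat\rho(\lambda)$ depends only on the number of cells of each content $c_{i,j}=j-i$, and this count is determined by $L_\lambda$; using $|f_\lambda|<\varepsilon_0$ allows one to replace it by the count for the limit shape up to $O(\varepsilon_0\sqrt n)$ error, showing $\hat\rho(\lambda)$ is bounded and converges to an explicit positive integral in $\Omega_c$. For $\hat\theta(\lambda)$ the dominant contribution comes from cells at distance $O(1)$ from the boundary; Abel summation, combined with $\mathfrak{m}(k)-\mathfrak{m}(k+1)=O(k^{-3})$ and the bound that a diagram whose profile approximates $\Omega_c$ has $O(k\sqrt n)$ cells of hook length at most $k$, yields $\hat\theta(\lambda)=O(1)$, with a strictly positive lower bound obtained by isolating cells of hook length one and two.

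The upper bound $\beta$ then follows by summing the four estimates. For the lower bound, the Sobolev term is nonnegative and the integral term is $O(\varepsilon_0)$, so it suffices to prove $\hat\theta(\lambda)-\hat\rho(\lambda)\geq\alpha_c$ for some $\alpha_c>0$; this reduces to an explicit comparison of the leading integrals in terms of $\Omega_c$ and its derivative \eqref{eq:Omega'}. I expect the main obstacle to be the control of $\sqrt n\,\|f_\lambda\|_{1/2}^2$, because the sup-norm estimate provided by Theorem \ref{thm:Biane} is too coarse on its own; passing to an $O(1)$ bound requires a quantitative pointwise fluctuation estimate that must be supplied from outside the variational formula.
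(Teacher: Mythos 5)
This statement is quoted verbatim from \cite{M2}; the present paper does not reprove it, only remarks that it follows ``without great difficulty'' from the variational formula. Even allowing for that, your term-by-term plan has a genuine gap and misses a far shorter route.

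The gap is the one you yourself flag: the term $\tfrac{\sqrt n}{8}\|f_\lambda\|_{1/2}^2$. Restricting to $\|f_\lambda\|_\infty<\varepsilon_0$ via Theorem~\ref{thm:Biane} and using the Lipschitz bound gives $\|f_\lambda\|_{1/2}^2\le C\varepsilon_0$, so one would need $\varepsilon_0=O(1/\sqrt n)$ --- but no such sup-norm concentration holds. Bulk fluctuations of $L_\lambda$ are of order $\sqrt{\log n}/\sqrt n$ and edge fluctuations are of order $n^{-1/3}$, both strictly larger. The fix you propose (a pointwise variance bound on $L_\lambda(x)$) would not close this either: the Sobolev norm is built out of increments $f_\lambda(s)-f_\lambda(t)$, so bounding its expectation requires controlling \emph{covariances} $\mathrm{Cov}(L_\lambda(s),L_\lambda(t))$, and that is precisely the heavy machinery developed in the present paper (Propositions~\ref{prop:covK} and \ref{prop:covOfPnN}). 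Invoking it to establish a statement attributed to \cite{M2} as ``not very hard'' is both circular relative to this paper and disproportionate in weight. The $G_c$-term suffers the same defect: it carries a factor $\sqrt n$, so $\|f_\lambda\|_\infty<\varepsilon_0$ alone gives only $O(\varepsilon_0\sqrt n)$; one needs the quantitative edge estimate of Proposition~\ref{prop:PnNNearEdges} (or an earlier analogue) to force $f_\lambda$ off the region $|x-c|>2$, again well beyond Biane's law of large numbers. (A smaller slip: $\hat\rho(\lambda)$ does not converge to a positive integral; it tends to $0$, cf.\ Lemma~\ref{lem:limitForContents}.)

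The shorter route, which is what the result being ``not very hard'' refers to, is as follows. The lower bound $\alpha_c$ is deterministic: Theorem~\ref{thm:maxDimBounds} gives $\max_\lambda\dim E_\lambda/N^n<e^{-\alpha_c\sqrt n}$, hence $-\tfrac1{\sqrt n}\ln\tfrac{\dim E_\lambda}{N^n}>\alpha_c$ for \emph{every} $\lambda\in\mathbb{Y}^n_N$, with no probabilistic input at all. For the upper bound, pick any $\beta>\pi\sqrt{2/3}$. Every $\lambda$ with $-\tfrac1{\sqrt n}\ln\mathbb{P}_N^n(\lambda)>\beta$ satisfies $\mathbb{P}_N^n(\lambda)<e^{-\beta\sqrt n}$, and there are at most $|\mathbb{Y}_N^n|\le p(n)=e^{\pi\sqrt{2n/3}\,(1+o(1))}$ such $\lambda$; summing, the $\mathbb{P}_N^n$-probability of this event is at most $e^{(\pi\sqrt{2/3}-\beta)\sqrt n\,(1+o(1))}\to0$. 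This two-line union bound requires neither the variational formula, nor Biane's theorem, nor any fluctuation estimate, and gives the stated convergence of the probability to $1$ --- something a Markov-type argument from entropy alone would not even deliver, since it only yields probabilities bounded away from $1$ uniformly in $n$.
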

In contrast, Theorem \ref{thm:main} states that the random variables $\frac{1}{\sqrt{n}}\ln\frac{\dim E_\lambda}{N^n}$ {\it converge} to constants with respect to $\mathbb{P}^n_N$. It was proven in \cite{M2} that the quantities $\frac{1}{\sqrt{n}}\ln\frac{\max_{\lambda\in\mathbb{Y}_N^n}\{\dim E_\lambda\}}{N^n}$ are also bounded.
\begin{theorem}[Theorem 1.1,\cite{M2}]
\label{thm:maxDimBounds}
For any $c>0$ there exist positive numbers $\alpha_c$ and $\beta$ such that for large enough $n\in\mathbb{N}$ and for any $N\in\mathbb{N}$, if $c>\frac{\sqrt{n}}{N}$, then
\begin{equation}
\alpha_c<-\frac{1}{\sqrt{n}}\ln\frac{\max_{\lambda\in\mathbb{Y}_N^n}\{\dim E_\lambda\}}{N^n}<\beta.
\end{equation}
\end{theorem}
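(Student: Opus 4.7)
My plan is to prove the upper bound $\beta$ by a simple counting argument and the lower bound $\alpha_c$ via the variational formula \eqref{eq:intFormMeas}. For the upper bound, since $\mathbb{P}_N^n$ is a probability measure supported on $\mathbb{Y}_N^n$ with $|\mathbb{Y}_N^n|\le p(n)$, averaging gives $\max_\lambda \mathbb{P}_N^n(\lambda)\ge 1/p(n)$. The Hardy--Ramanujan estimate $\ln p(n)=\pi\sqrt{2n/3}(1+o(1))$ then yields $-\frac{1}{\sqrt n}\ln\max_\lambda \mathbb{P}_N^n(\lambda)\le \pi\sqrt{2/3}+o(1)$, so any $\beta$ slightly exceeding $\pi\sqrt{2/3}$ works uniformly in $c$.

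For the lower bound I need to show that the right-hand side of \eqref{eq:intFormMeas} is at least $\alpha_c>0$ uniformly over $\lambda\in\mathbb{Y}_N^n$. The first two terms combine into a shape-dependent energy $E(f_\lambda):=\frac{\sqrt n}{8}\|f_\lambda\|_{1/2}^2+\frac{\sqrt n}{2}\int_{|x-c|>2}G_cf_\lambda\,dx$; both pieces scale as $\sqrt n$. On the far tails, where $\Omega_c(x)=|x|$, one has $f_\lambda\ge 0$ since $L_\lambda(x)\ge|x|$, and a direct calculation (using the identity $(x-c)/2 - \tfrac{3c-c^3+(1+c^2)x}{2(1+cx)} = \tfrac{c((x-c)^2-4)}{2(1+cx)}$) shows the two $\arccosh$ contributions in $G_c$ combine nonnegatively there. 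The transitional interval $[-1/c,c-2]$ for $c>1$ is more delicate, but is handled using the vanishing of $G_c$ at $x=c-2$ and the integrability of its singularity at $x=-1/c$. These estimates, together with the characterization of $\Omega_c$ in \cite{M2} as the unique minimizer of the variational problem, yield $E(f_\lambda)\ge -o(1)$ uniformly in $\lambda$.

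It then suffices to bound the remaining term $\hat\theta(\lambda)-\hat\rho(\lambda)$ from below by a positive constant. I proceed by a dichotomy on $\|f_\lambda\|_{1/2}$: if $\|f_\lambda\|_{1/2}^2\ge K$ for a sufficiently large constant $K$, the Sobolev piece alone contributes $\frac{K\sqrt n}{8}\to\infty$, making the inequality trivial; otherwise $L_\lambda$ is $\tfrac12$-Sobolev-close to $\Omega_c$, and a stability argument shows that the discrete sums $\sum\mathfrak{m}(h_{i,j})$ and $\sum\mathfrak{m}(N+c_{i,j})$ approximate explicit integrals over $\Omega_c$ which combine to a positive constant $H_c>0$. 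Setting $\alpha_c=H_c/2$ then completes the proof.

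The main obstacle is the stability step in the second case of the dichotomy. Since $\mathfrak{m}(h)$ is $\Theta(1)$ on cells of small hook length while $\mathfrak{m}(h)\sim h^{-2}/6$ in the bulk, the sums $\hat\theta$ and $\hat\rho$ are driven by the $\Theta(\sqrt n)$ boundary cells, which contribute a nonvanishing $\Theta(1)$ after dividing by $\sqrt n$; the bulk gives only $o(1)$. Quantifying how $\tfrac12$-Sobolev proximity of $L_\lambda$ to $\Omega_c$ forces these boundary sums to track their limit-shape integral counterparts --- using only the Lipschitz-$1$ regularity of $L_\lambda$ --- is the technical heart of the argument, and verifying $H_c>0$ is then a direct (if tedious) calculation using the explicit form of $\Omega_c$.
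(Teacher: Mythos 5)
This theorem is not proved in the present paper; it is quoted verbatim from~\cite{M2} (it is Theorem~1.1 there), so there is no in-paper proof to compare against. The most I can do is evaluate your argument on its own merits.

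Your upper bound via $\max_\lambda \mathbb{P}^n_N(\lambda)\ge 1/|\mathbb{Y}^n_N|\ge 1/p(n)$ and Hardy--Ramanujan is clean and correct, and gives a $\beta$ uniform in $c$, as required.

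The lower bound, however, has a genuine gap. The heart of the problem is the ``otherwise'' branch of your dichotomy, where you assert that $\|f_\lambda\|_{1/2}^2<K$ for a \emph{constant} $K$ forces $\hat\theta(\lambda)-\hat\rho(\lambda)$ to approximate $H_c>0$. That implication is false. Take a diagram whose point configuration $\mathcal P(\lambda)$ is a ``blocked'' approximation of the limit shape: in each region partition $\mathbb Z$ into consecutive blocks of $0$'s and $1$'s of size of order $a$, tuned so that the local $1$-density matches $(1-\Omega_c')/2$. Then $\|f_\lambda\|_\infty = O(a/\sqrt n)$ and $\|f_\lambda\|_{1/2}^2=O(a/\sqrt n)$ (scaled coordinates, support of length $O(1)$). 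Since $h_k(\lambda)=\sum_i c_i(\lambda)(1-c_{i-k}(\lambda))$ is governed by the block structure, one gets $h_k=O(\sqrt n\, k/a)$ for $k<a$ and $h_k\le \sqrt n$ for $k\ge a$, hence $\hat\theta=O\!\left(\tfrac{\ln a}{a}\right)$. Taking $a=n^{1/4}$ produces $\lambda$ with $\|f_\lambda\|_{1/2}^2 = O(n^{-1/4})\ll K$ yet $\hat\theta(\lambda)\to 0$, not $H_c$. For such $\lambda$ the inequality is actually rescued by the Sobolev term, which is of order $\sqrt n\cdot n^{-1/4}=n^{1/4}\to\infty$ --- but that is not what your ``otherwise'' case appeals to. The correct mechanism is a \emph{trade-off} between the Sobolev energy and $\hat\theta$ (coarse block scale $\Rightarrow$ large Sobolev energy; fine block scale $\Rightarrow$ many short hooks $\Rightarrow$ large $\hat\theta$), not a two-case argument in which one case relies on Sobolev closeness alone. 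Your phrase ``It then suffices to bound $\hat\theta(\lambda)-\hat\rho(\lambda)$ from below by a positive constant'' is likewise not right as stated: for $\lambda=(n)$ one has $\hat\theta(\lambda)=O(n^{-1/2})\to 0$, so there is no uniform positive lower bound on this term.

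Two further points are under-addressed. First, your claim $E(f_\lambda)\ge -o(1)$ is not substantiated in the case $c>1$: on $[-1/c,\,c-2]$ the shape function $\Omega_c$ exceeds $|x|$, so $f_\lambda$ need not be nonnegative there, and $G_c$ has a logarithmic singularity at $x=-1/c$; the sign of $\int G_c f_\lambda$ on that interval has to be handled carefully, not just ``using integrability.'' Second, the theorem requires the bound for \emph{all} $N$ with $\sqrt n/N<c$, so the parameter entering the variational formula is $c_{n,N}=\sqrt n/N$, which ranges freely over $(0,c)$; you therefore need a lower bound uniform in $c_{n,N}\in(0,c)$, which is not the same as working with the single fixed value $c$ and $H_c$.
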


Analogous results to Theorems \ref{thm:typDimBounds} and \ref{thm:maxDimBounds} for the Plancherel measures were obtained by Vershik and Kerov in 1985 \cite{VK85}. Numerical simulations by Vershik and Pavlov \cite{VP} suggest that for the Plancherel measures the typical dimensions converge in measure (Theorem \ref{thm:Buf} by A. Bufetov). However, their simulations suggest that perhaps no such convergence holds for the maximal dimensions.

\subsection{Plancherel type measures for the infinite-dimensional unitary group}
As mentioned in the Introduction, we will need to study the poissonization of the Schur--Weyl measures. The poissonized measures are closely related to measures on signatures of length $N$ corresponding to certain extreme characters of the infinite dimensional unitary group, which we now introduce.

Let $U(N)$ denote the group of all $N\times N$ unitary matrices. There is a natural embedding of $U(N)$ into $U(N+1)$ given by 
\begin{equation*}
U(N)\ni U\mapsto
\left(\begin{array}{cc}
U&0\\
0&1
\end{array}\right)\in U(N+1).
\end{equation*}
Define the infinite dimensional unitary group $U(\infty)$ to be $$U(\infty)=\bigcup_{N=1}^\infty U(N).$$

Let $\mathbb{GT}_N$ be the set of signatures of length $N$, i.e. the set of sequences $\lambda$ of $N$ nonnegative nonincreasing integers: $\lambda=(\lambda_1\geq \lambda_2 \geq \ldots \geq \lambda_N),\ \lambda_i\in\mathbb{Z}$. It is well known that the irreducible highest--weight representations of $U(N)$ are parametrized by the set $\mathbb{GT}_N$. For $\lambda\in\mathbb{GT}_N$ let $W_\lambda$ denote the irreducible representation of $U(N)$ with highest--weight $\lambda$, and let $\chi^\lambda$ and $\dim_N \lambda$ be respectively the character and dimension of $W_\lambda$. Note that $\chi^\lambda(e)=\dim_N \lambda$, where $e$ is the identity. Define the normalized character $\tilde{\chi}^\lambda$ as $\tilde{\chi}^\lambda=\frac{\chi^\lambda}{\dim_N \lambda}$.

The notion of a normalized character can be generalized to groups such as $U(\infty)$. A normalized character of $U(\infty)$ is a positive-definite continuous function $\chi$ which is invariant under conjugation and satisfies the condition $\chi(e)=1$. The set of normalized characters of $U(\infty)$ is a convex set and the extreme characters of $U(\infty)$ are defined to be the extreme points of this set.

Extreme characters of $U(\infty)$ can be approximated by the normalized characters of $U(N)$ when $N$ goes to infinity. Here we will present the exact statement of this result only in the specific case of interest to us. For a more general discussion of extreme characters of $U(\infty)$ and for proofs see for example \cite{VK82}, \cite{OkOlJack}, \cite{BK} or \cite{BOBoundaryGT}. 

A signature $\lambda$ can be represented by two Young diagrams $(\lambda^+,\lambda^-)$ corresponding to its positive and negative parts. If $\lambda^+=(\lambda^+_1\geq\lambda^+_2\geq\ldots\geq 0)$ and $\lambda^-=(\lambda^-_1\geq\lambda^-_2\geq\ldots\geq 0)$, then 
\begin{equation*}
\lambda=(\lambda^+_1\geq\lambda^+_2\geq\ldots\geq-\lambda^-_2\geq-\lambda^-_1).
\end{equation*} 
Let ${\lambda^\pm}'$ be the transposes of $\lambda^\pm$, i.e. the number of cells in the $i$-th row of ${\lambda^\pm}'$ is equal to the number of cells in the $i$-th column of ${\lambda^\pm}$. 

For a Young diagram $\mu$ let $|\mu|$ denote the number of boxes in $\mu$ and let $d(\mu)$ denote the number of cells on the diagonal of $\mu$. The numbers $p_i(\mu):=p_{i,i}(\mu)$ and $q_i(\mu):=q_{i,i}(\mu),\ 1\leq i\leq d(\mu)$ are called Frobenius coordinates of the Young diagram $\mu$ (see Figure \ref{fig:Frob}). They completely determine $\mu$.

%\begin{theorem}[Theorem 2.1, \cite{BK}]
\begin{theorem}[\cite{VK82}]
\label{thm:extremeChars}
For any extreme character $\chi$ of $U(\infty)$ there exists a unique set of constants $\alpha^\pm_1\geq\alpha^\pm_2\geq\dots 0$, $\beta^\pm_1\geq\beta^\pm_2\geq\dots 0$ and $\delta^\pm\geq 0$, satisfying the conditions
\begin{equation*}
\sum_{i=1}^{\infty}(\alpha_i^\pm+\beta_i^\pm)< \delta^\pm,\qquad \beta_1^++\beta_1^-\leq 1,
\end{equation*}
and such that for any sequence of signatures $\lambda(N)\in\mathbb{GT}_N$, if
\begin{equation*}
\lim_{N\rightarrow\infty}\frac{p_i(\lambda(N)^\pm)}{N}=\alpha^\pm_i,\ 
\lim_{N\rightarrow\infty}\frac{q_i(\lambda(N)^\pm)}{N}=\beta^\pm_i,\ \text{and }
\lim_{N\rightarrow\infty}\frac{|\lambda(N)^\pm|}{N}=\delta^\pm,
\end{equation*}
then the normalized characters $\tilde{\chi}^{\lambda(N)}$ approximate $\chi$. %Here $p_i$ and $q_i$ are the Frobenius coordinates of the corresponding Young diagrams.
\end{theorem}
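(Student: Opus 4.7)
The strategy is to use the Gelfand--Tsetlin branching graph $\mathbb{GT} = \bigsqcup_N \mathbb{GT}_N$, where $\mu \in \mathbb{GT}_{N-1}$ is connected to $\lambda \in \mathbb{GT}_N$ whenever $\mu$ interlaces $\lambda$ (equivalently, whenever $W_\mu$ appears in $W_\lambda|_{U(N-1)}$). Any normalized character $\chi$ of $U(\infty)$ determines a coherent family of probability measures $M_N$ on $\mathbb{GT}_N$ via the decomposition $\chi|_{U(N)} = \sum_{\lambda \in \mathbb{GT}_N} M_N(\lambda)\, \tilde{\chi}^\lambda$, and extreme characters of $U(\infty)$ are in bijection with ergodic (i.e.\ extreme harmonic) points of this inverse system. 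My first step would be to establish this correspondence using Choquet theory together with the averaging formula $\tilde{\chi}^\lambda(U) = \int \tilde{\chi}^\lambda(VUV^{-1})\,dV$ on each $U(N)$.

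Next I would extract the parameters $(\alpha_i^\pm, \beta_i^\pm, \delta^\pm)$ attached to an extreme $\chi$ via the Vershik--Kerov ergodic method. For ergodic $\chi$, the tail $\sigma$-algebra of the Markov chain $(\lambda(N))_N$ induced by the coherent system $\{M_N\}$ is trivial, so every tail-measurable functional is $M_\infty$-almost surely constant. Applied to the rescaled Frobenius coordinates $p_i(\lambda(N)^\pm)/N$, $q_i(\lambda(N)^\pm)/N$, and $|\lambda(N)^\pm|/N$---each of which stabilizes as $N$ grows because interlacing forces slow coordinatewise change---this produces the deterministic limits $\alpha_i^\pm,\beta_i^\pm,\delta^\pm$. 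The inequality $\beta_1^+ + \beta_1^- \leq 1$ follows from the fact that the first columns of $\lambda(N)^+$ and $\lambda(N)^-$ together contain at most $N$ cells, while $\sum_i(\alpha_i^\pm + \beta_i^\pm) \leq \delta^\pm$ comes from the Frobenius identity $\sum_i(p_i(\mu)+q_i(\mu)) = |\mu|$.

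The hardest and most technical step is the approximation claim: verifying that for any sequence $\lambda(N) \in \mathbb{GT}_N$ with the prescribed Frobenius convergence, the normalized characters $\tilde{\chi}^{\lambda(N)}$ converge to $\chi$. By conjugation-invariance and continuity it suffices to check pointwise convergence on diagonal matrices of the form $\operatorname{diag}(u_1, \ldots, u_k, 1, \ldots, 1)$ for arbitrary fixed $k$. Applying the Weyl character formula $\chi^\lambda(U) = \det\bigl(u_j^{\lambda_i + N - i}\bigr) / \det\bigl(u_j^{N-i}\bigr)$, removing the trivial eigenvalues by row reduction, and then analyzing the resulting ratio of $k \times k$ determinants via contour-integral representations of Schur polynomials, one expects the ratio to factor in the limit as $\prod_{j=1}^k f(u_j)$, where $f$ is the Edrei--Voiculescu generating function whose parameters are exactly $(\alpha_i^\pm, \beta_i^\pm, \delta^\pm)$. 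The main obstacle is precisely this asymptotic analysis: the long rows with $p_i \sim \alpha_i^\pm N$ must be shown to generate the simple poles $(1-\alpha_i^\pm(u^{\pm 1}-1))^{-1}$, the long columns with $q_i \sim \beta_i^\pm N$ must produce the factors $1+\beta_i^\pm(u^{\pm 1}-1)$, and the residual bulk of $(\delta^\pm - \sum_i(\alpha_i^\pm+\beta_i^\pm))N$ cells must yield the exponential factor $\exp(\delta^\pm(u^{\pm 1}-1))$---all simultaneously and with uniform control in $u_1,\ldots,u_k$. Uniqueness of the parameters then follows from the injectivity of the map from $(\alpha^\pm,\beta^\pm,\delta^\pm)$ to $f$, hence to $\chi$.
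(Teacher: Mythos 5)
The paper provides no proof of this theorem: it is stated with a citation to Vershik--Kerov \cite{VK82} and used as a black box in the remainder of the text, so there is no in-paper proof to compare your attempt against.

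Evaluated on its own, your outline follows the standard Vershik--Kerov ergodic-method route to the Edrei--Voiculescu classification of extreme characters of $U(\infty)$, and the three stages you identify---(i) identifying normalized characters with coherent systems of measures on the Gelfand--Tsetlin graph and extreme characters with ergodic ones via Choquet theory, (ii) extracting deterministic parameters from ergodicity, (iii) the analytic verification that Frobenius-coordinate convergence forces character convergence---are the right skeleton, and your derivations of $\beta_1^+ + \beta_1^- \leq 1$ and $\sum_i(\alpha_i^\pm + \beta_i^\pm) \leq \delta^\pm$ are correct. One step as written is too quick: the assertion that $p_i(\lambda(N)^\pm)/N$, $q_i(\lambda(N)^\pm)/N$, $|\lambda(N)^\pm|/N$ ``stabilize as $N$ grows because interlacing forces slow coordinatewise change'' is not by itself enough. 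Interlacing gives monotonicity of the unnormalized coordinates along a path, but that does not make the $N$-normalized quantities converge; what one actually uses is that suitable functionals of the path $(\lambda(N))_N$ are bounded reverse martingales under $M_\infty$, hence converge almost surely, with ergodicity of $M_\infty$ forcing the limits to be constants. Your step (iii) correctly isolates where the hard analysis lies; the contour-integral and Schur-polynomial machinery you propose is a valid modern route (closer in spirit to the Okounkov--Olshanski and Borodin--Olshanski treatments cited in the paper), whereas the original \cite{VK82} argument proceeded through asymptotics of relative dimensions and generating functions---different packaging of essentially the same estimates. The proposal would still need to be supplemented with the surjectivity half of the classification (that every extreme character actually arises from some admissible parameter set), which your sketch leaves implicit.
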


Set
\begin{equation*}
\gamma^\pm=\delta^\pm-\sum_{i=1}^\infty(\alpha^\pm_i+\beta^\pm_i)\geq 0.
\end{equation*}

Let $\chi^{\gamma^+,\gamma^-}$ denote the characters which according to Theorem \ref{thm:extremeChars} can be approximated by $\tilde{\chi}^{\lambda(N)}$ with $\alpha^\pm_i=\beta^\pm_i=0$. In other words, $\chi^{\gamma^+,\gamma^-}$ correspond to limits of $\tilde{\chi}^{\lambda(N)}$ when the rows and columns of $\lambda^\pm(N)$ grow sublinearly in $N$ and $|\lambda^\pm(N)|$ grow as $\gamma^{\pm}N$.

D. Voiculescu \cite{Voi} gave a complete description of extreme characters of $U(\infty)$. In particular, given $U\in U(\infty)$, for $\chi^{\gamma^+,\gamma^-}$ we have
\begin{equation}
\label{eq:character}
\chi^{\gamma^+,\gamma^-}(U)=\prod_{u\in \text{Spectrum}(U)}e^{\gamma^+(u-1)+\gamma^-(u^{-1}-1)}.
\end{equation}

Given a character $\chi$ of $U(\infty)$, consider its restriction to $U(N)$. It can be decomposed into a nonnegative linear combination of irreducible, and hence normalized irreducible characters of $U(N)$. Write
\begin{equation}
\label{eq:decompMeas}
\chi|_{U(N)}=\sum_{\lambda\in\mathbb{GT}_N}\mathbb{P}^\chi_N(\lambda)\tilde{\chi}^\lambda.
\end{equation}
$\mathbb{P}^\chi_N(\lambda)$ gives a probability measure on $\mathbb{GT}_N$. Let $\mathbb{P}^{\gamma^+,\gamma^-}_N$ be the measure corresponding to the extreme character $\chi^{\gamma^+,\gamma^-}$. 

\section{Poissonization and depoissonization}
\label{sec:poissonization}

All statements that follow are proven for arbitrary $c\in(0,1)\cup(1,\infty)$, however no uniformity in $c$ is established. In particular all constants may depend on $c$, but to simplify notation this dependence will not be indicated explicitly.
\subsection{Poissonization of Schur--Weyl measures}
Recall that the Poisson distribution with rate $\mu$ is
\begin{equation*}
\Pois_\mu(n)=e^{-\mu}\frac{\mu^n}{n!}.
\end{equation*}

If $\{\mathbb{P}^n\}_{n\in\mathbb{N}}$ is a family of measures with distinct supports $\{S_n\}_{n\in\mathbb{N}}$, its poissonization with parameter $\nu$ is the measure $\Pois_{\mathbb{P},\nu}$ with support $S:=\cup_{n\in\mathbb{N}} S_n$ and defined by
\begin{equation*}
\Pois_{\mathbb{P},\nu}(x)=e^{-\nu}\frac{\nu^n}{n!}\mathbb{P}^n(x),
\end{equation*}
where $\mathbb{P}^n$ is naturally extended to $S$ by setting $\mathbb{P}^n(S\backslash S_n)=0$.

Let $\mathbb{P}_{\nu,N}$ denote poissonization of the family of measures $\mathbb{P}^n_N$ with respect to $n$. It is a one--parameter family of measures on $\bigcup_{n\in\mathbb{N}}\mathbb{Y}^n_N$ defined by
\begin{equation*}
\mathbb{P}_{\nu,N}(\lambda)=e^{-\nu}\frac{\nu^n}{n!}\mathbb{P}_N^n(\lambda)\quad\text{ if }\lambda\in\mathbb{Y}^n_N.
\end{equation*}

\begin{lemma}
\label{lem:poissonization}
The measure $\mathbb{P}_N^{\gamma^+,0}$ is the poissonization of the measure $\mathbb{P}_N^n$ with respect to $n$. The poissonization parameter is $\nu=\gamma^+ N$.
\end{lemma}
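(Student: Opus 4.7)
The strategy is to compute the restriction $\chi^{\gamma^+,0}|_{U(N)}$ directly from Voiculescu's formula \eqref{eq:character} and match the resulting expansion against the decomposition \eqref{eq:decompMeas}, identifying the coefficients with the Poissonization $\mathbb{P}_{\nu,N}(\lambda)$.

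First I would substitute $\gamma^-=0$ into \eqref{eq:character}: for $U\in U(N)$ with eigenvalues $u_1,\dots,u_N$ one gets
\[
\chi^{\gamma^+,0}(U)=\prod_{i=1}^{N}e^{\gamma^+(u_i-1)}=e^{-\gamma^+ N}\,e^{\gamma^+\operatorname{tr}(U)}.
\]
Expanding the exponential as a power series yields
\[
\chi^{\gamma^+,0}(U)=e^{-\gamma^+ N}\sum_{n=0}^{\infty}\frac{(\gamma^+)^n}{n!}\,(\operatorname{tr} U)^n.
\]

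Next I would invoke the character identity coming from Schur--Weyl duality applied to $(\mathbb{C}^N)^{\otimes n}$ as a $U(N)\times S_n$--module: taking the $U(N)$--character gives
\[
(\operatorname{tr} U)^n=\sum_{\lambda\in\mathbb{Y}^n_N}\dim\lambda\cdot s_\lambda(u_1,\dots,u_N),
\]
where $s_\lambda$ is the Schur polynomial, which is exactly the irreducible character $\chi^\lambda$ of $U(N)$ for $\lambda\in\mathbb{Y}^n_N\subset\mathbb{GT}_N$ (a signature with no negative parts). Writing $\chi^\lambda=\dim_N\lambda\cdot\tilde\chi^\lambda$ and using $\dim E_\lambda=\dim\lambda\cdot\dim_N\lambda$ together with $\dim E_\lambda=N^n\,\mathbb{P}^n_N(\lambda)$ gives
\[
(\operatorname{tr} U)^n=\sum_{\lambda\in\mathbb{Y}^n_N}N^n\,\mathbb{P}^n_N(\lambda)\,\tilde\chi^\lambda(U).
\]

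Finally I would substitute this into the exponential expansion, group the prefactors, and set $\nu=\gamma^+ N$ to obtain
\[
\chi^{\gamma^+,0}|_{U(N)}=\sum_{n=0}^{\infty}\sum_{\lambda\in\mathbb{Y}^n_N}e^{-\nu}\frac{\nu^n}{n!}\,\mathbb{P}^n_N(\lambda)\,\tilde\chi^\lambda=\sum_{\lambda\in\bigcup_n\mathbb{Y}^n_N}\mathbb{P}_{\nu,N}(\lambda)\,\tilde\chi^\lambda.
\]
Since normalized irreducible characters of $U(N)$ are linearly independent, comparison with \eqref{eq:decompMeas} forces $\mathbb{P}^{\gamma^+,0}_N(\lambda)=\mathbb{P}_{\nu,N}(\lambda)$ for all $\lambda$ with nonnegative parts, and the expansion shows the measure is supported there (so $\mathbb{P}^{\gamma^+,0}_N$ vanishes on signatures with negative entries), completing the identification.

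The one point requiring care is the Schur--Weyl character identity: I need to justify that the Schur polynomial $s_\lambda$ on the eigenvalues of $U\in U(N)$ coincides with the irreducible $U(N)$--character $\chi^\lambda$ when $\lambda\in\mathbb{Y}^n_N$ is viewed as an element of $\mathbb{GT}_N$. This is standard but is the conceptual bridge that links the symmetric--group side (the $\dim\lambda$ factor) to the unitary--group side (the $\dim_N\lambda$ factor), so the main work is invoking the right classical identity; the rest is rearranging series and reading off coefficients.
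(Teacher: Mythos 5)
Your proof is correct and takes essentially the same approach as the paper's: both rest on Voiculescu's formula for $\chi^{\gamma^+,0}$, the Schur--Weyl character identity $(\operatorname{tr}U)^n=\sum_{\lambda\in\mathbb{Y}^n_N}\dim\lambda\cdot\chi^\lambda(U)$, and the factorization $\dim E_\lambda=\dim\lambda\cdot\dim_N\lambda$. The paper merely runs the computation in the other direction, evaluating $\sum_\lambda\mathbb{P}_{\gamma^+N,N}(\lambda)\tilde\chi^\lambda$ and showing it collapses to $e^{\gamma^+\operatorname{tr}U-\gamma^+N}$, while you expand $\chi^{\gamma^+,0}$ and read off the coefficients.
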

\begin{proof}
% Let $\mathbb{P}_{\nu,N}$ denote the poissonization of $\mathbb{P}_N^n$ with respect to $n$, with parameter $\nu$. We have 
% \begin{equation*}
% \mathbb{P}_{\nu,N}(\lambda)=e^{-\nu}\frac{\nu^n}{n!}\mathbb{P}_N^n(\lambda).
% \end{equation*}
We need to show that $\mathbb{P}_{\gamma^+N,N}=\mathbb{P}_N^{\gamma^+,0}$. By \eqref{eq:decompMeas} it is enough to show that 
\begin{equation*}
\chi^{\gamma^+,0}|_{U(N)}=\sum_{\lambda\in\mathbb{GT}_N}\mathbb{P}_{\gamma^+N,N}(\lambda)\frac{\chi^\lambda}{\dim_N\lambda}.
\end{equation*}
By \eqref{eq:character}, for $U\in U(N)$,
\begin{equation*}
\chi^{\gamma^+,0}(U)=e^{\gamma^+ tr U - \gamma^+ N}.
\end{equation*}
It is a consequence of Schur--Weyl duality \cite{FultonHarris} that $E_\lambda=V_\lambda\tensor W_\lambda$. Hence
\begin{equation*}
\mathbb{P}_N^n(\lambda)=\frac{\dim E_\lambda }{N^n}=\frac{\dim\lambda \dim_N\lambda}{N^n},
\end{equation*}
which implies
\begin{align*}
\sum_{\lambda\in\mathbb{GT}_N}\mathbb{P}_{\gamma^+N,N}(\lambda)\frac{\chi^\lambda(U)}{\dim_N\lambda}
&=\sum_{\lambda\in\mathbb{GT}_N}e^{-\gamma^+N}\frac{(\gamma^+N)^n}{n!}\mathbb{P}_N^n(\lambda)\frac{\chi^\lambda(U)}{\dim_N\lambda}
\\&=\sum_{\lambda\in\mathbb{GT}_N}e^{-\gamma^+N}\frac{(\gamma^+N)^n}{n!}\frac{\dim\lambda \dim_N\lambda}{N^n}\frac{\chi^\lambda(U)}{\dim_N\lambda}
\\&=\sum_{\lambda\in\mathbb{GT}_N}e^{-\gamma^+N}\frac{(\gamma^+)^n}{n!}\chi^\lambda(U) \dim\lambda.
\end{align*}
Let $\mathbb{GT}_N^n$ be the set of signatures $\lambda\in\mathbb{GT}_N$ which have only nonnegative terms and for which $\sum_i \lambda_i=n$. Note that $\mathbb{GT}_N^n$ coincides with the set $\mathbb{Y}_N^n$. We obtain
\begin{align*}
\sum_{\lambda\in\mathbb{GT}_N}\mathbb{P}_{\gamma^+N,N}(\lambda)
%&
\frac{\chi^\lambda(U)}{\dim_N\lambda}
%\\
&=\sum_{n=0}^\infty e^{-\gamma^+ N}\frac{(\gamma^+)^n}{n!}\sum_{\lambda\in\mathbb{GT}_N^n}\chi^\lambda(U) \dim\lambda
\\&=\sum_{n=0}^\infty e^{-\gamma^+ N}\frac{(\gamma^+)^n}{n!}\chi^{(\mathbb{C}^N)^{\otimes n}}(U)
\\&=\sum_{n=0}^\infty e^{-\gamma^+ N}\frac{(\gamma^+)^n}{n!}(tr U)^n
\\&=e^{-\gamma^+ N} \sum_{n=0}^\infty \frac{(\gamma^+ tr U)^n}{n!}=e^{\gamma^+ tr U - \gamma^+ N},
\end{align*}
which completes the proof.

$\qed$
\end{proof}

If certain conditions are met (see Lemma \ref{lem:depoissonization}), properties of a family of measures $\mathbb{P}^n$ when $n\rightarrow\infty$ can be obtained from analogous properties of the poissonization $\Pois_{\mathbb{P},\nu}$ of those measures when $\nu\rightarrow\infty$:
\begin{equation*}
\mathbb{P}^n(x)\approx\Pois_{\mathbb{P},\nu}(x),\quad\text{ when }\nu\approx n\gg 1.
\end{equation*}

According to Lemma \ref{lem:poissonization} the poissonization of $\mathbb{P}_N^n$ with respect to $n$ with parameter $\nu$ gives $\mathbb{P}_N^{\frac \nu N,0}$. Since we are interested in properties of $\mathbb{P}^n_N$ in the limit when $n\rightarrow\infty$ so that $\frac{\sqrt{n}}N\rightarrow c$, the relevant limit of the poissonized measures $\mathbb{P}^{\gamma^+,0}_N$ for us is when the poissonization parameter $\gamma^+N$ converges to infinity so that $\frac{\sqrt{\gamma^+N}}N\rightarrow c$, or equivalently that $\frac{\gamma^+}N\rightarrow c^2$.

\subsection{The poissonized measures as determinantal point processes}
\label{sec:PointProc}
Associate with each $\lambda\in\mathbb{GT}_N$ the point configuration 
\begin{equation*}
\mathcal{P}(\lambda):=\{\lambda_1-1,\lambda_2-2,\ldots,\lambda_N-N\}\subset\mathbb{Z}.
\end{equation*}
Under this correspondence the pushforward of $\mathbb{P}_N^{\gamma^+,0}$ is a random $N$-point process on $\mathbb{Z}$. See Figure \ref{fig:Points} for a visualization of this correspondence. Note, that since the measure $\mathbb{P}_N^{\gamma^+,0}$ is supported on Young diagrams with at most $N$ rows, we are working with configurations which are subsets of $[-N,\infty)$.

\begin{figure}[t]
\includegraphics[width=7cm]{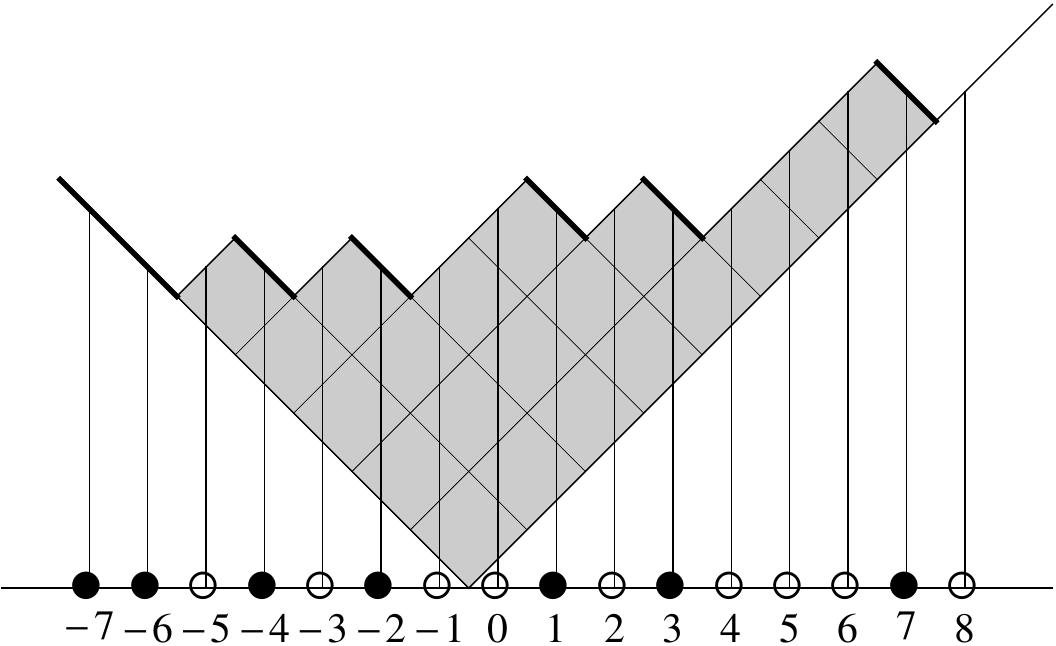}
\caption{\label{fig:Points}Black dots are points in the configuration while white dots are not.}
\end{figure}

Borodin and Kuan have proven that the point process corresponding to $\mathbb{P}_N^{\gamma^+,0}$ is determinantal. 
\begin{theorem}[Theorem 3.2, \cite{BK}]
\label{thm:intFormK}
The point process $\mathbb{P}_N^{\gamma^+,0}$ is determinantal: for arbitrary $x_1,\ldots,x_k\in\mathbb{Z}$, 
\begin{equation*}
\mathbb{P}_N^{\gamma^+,0}\left\{\lambda:\{x_1,\ldots,x_k\}\subset\mathcal{P}(\lambda)\right\}=\det[K_{N,\gamma^+}(x_i,x_j)]_{1\leq i,j\leq k}.
\end{equation*}
The correlation kernel $K_{N,\gamma^+}$ is given by
\begin{equation}
\label{eq:intFormK}
K_{N,\gamma^+}(x,y)=\frac{1}{(2\pi\mathfrak{i})^2}\oint_{|u|=r}\oint_{|w-1|=r}\frac{e^{\gamma^+u^{-1}}}{e^{\gamma^+w^{-1}}}\frac{u^{x}}{w^{1+y}}\frac{(1-u)^N}{(1-w)^N}\frac{du\ dw}{u-w},
\end{equation}
where $r$ is any constant in $(0,\frac 12)$.
\end{theorem}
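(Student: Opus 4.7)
The plan is to recognize $\mathbb{P}_N^{\gamma^+,0}$ (pushed forward to $\mathcal{P}(\lambda)$) as an Okounkov Schur measure with two explicit specializations, and then invoke Okounkov's theorem that such a measure gives a determinantal point process on $\mathbb{Z}$ with correlation kernel expressible as a double contour integral. By Lemma \ref{lem:poissonization}, for $\lambda\in\mathbb{Y}_N^n$ with $n=|\lambda|$,
\begin{equation*}
\mathbb{P}_N^{\gamma^+,0}(\lambda)
= e^{-\gamma^+ N}\frac{(\gamma^+)^n}{n!}\dim\lambda\cdot\dim_N\lambda
= \frac{s_\lambda(1^N)\,s_\lambda(\mathrm{exp}_{\gamma^+})}{e^{\gamma^+ N}},
\end{equation*}
where the first factor is the principal specialization $s_\lambda(\underbrace{1,\dots,1}_{N})=\dim_N\lambda$ (Weyl character at the identity) and the second is the exponential (``Plancherel'') specialization $p_1\mapsto\gamma^+$, $p_k\mapsto 0$ for $k\ge 2$, which sends $s_\lambda$ to $\dim\lambda\cdot(\gamma^+)^n/n!$. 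The normalization $e^{\gamma^+ N}$ is precisely the Cauchy sum $\sum_\lambda s_\lambda(X)\,s_\lambda(Y)=\prod_{i,j}(1-x_iy_j)^{-1}$ specialized to these two inputs, and the support constraint $\ell(\lambda)\le N$ is automatic since $s_\lambda(1^N)=0$ otherwise.

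The next step is to apply Okounkov's formula \cite{OkNASA2001} for a Schur measure: with $\mathcal{H}(X;z)=\sum_k h_k(X)z^k$ and similarly for $Y$, the correlation kernel of the point process $\{\lambda_i-i\}$ is
\begin{equation*}
K(x,y)=\frac{1}{(2\pi\mathfrak{i})^2}\oint\!\!\oint\frac{\mathcal{H}(X;u)}{\mathcal{H}(X;w)}\cdot\frac{\mathcal{H}(Y;1/w)}{\mathcal{H}(Y;1/u)}\cdot\frac{u^x}{w^{y+1}}\cdot\frac{du\,dw}{u-w},
\end{equation*}
taken over nested contours whose mutual position controls the geometric expansion of $(u-w)^{-1}$. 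For our two specializations $\mathcal{H}(1^N;z)=(1-z)^{-N}$ and $\mathcal{H}(\mathrm{exp}_{\gamma^+};z)=e^{\gamma^+ z}$; substituting yields, up to a relabeling of the two integration variables, exactly the integrand in \eqref{eq:intFormK}. The prescribed contours $|u|=r$ and $|w-1|=r$ with $r\in(0,\tfrac12)$ are the correct concrete realization of the nesting: a small circle around $0$ reads off the Laurent coefficient in $u^x$, a small circle around $1$ encloses the order-$N$ pole of $(1-w)^{-N}$, and $r<\tfrac12$ keeps the two disjoint so that no spurious residue at $u=w$ is collected.

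The main obstacle is Okounkov's determinantal formula itself. The cleanest route, and Okounkov's original one, is the infinite wedge (fermionic Fock space) formalism: one writes the correlation generating series as a vacuum expectation of a product of vertex operators whose Bogoliubov action is controlled by $\mathcal{H}(X;\cdot)$ and $\mathcal{H}(Y;\cdot)$, and Wick's theorem then both produces the determinantal structure and identifies the two-point function with the stated integral. A more elementary alternative is to use the Weyl dimension formula together with the Jacobi--Trudi identity to rewrite the measure, with $\ell_i=\lambda_i+N-i$, as
\begin{equation*}
\mathbb{P}_N^{\gamma^+,0}(\lambda)\propto\det\bigl[\ell_i^{j-1}\bigr]_{i,j=1}^{N}\cdot\det\Bigl[\tfrac{(\gamma^+)^{\ell_i-N+j}}{(\ell_i-N+j)!}\Bigr]_{i,j=1}^{N},
\end{equation*}
and then to invoke the Eynard--Mehta theorem for biorthogonal ensembles. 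The price is that one must invert the explicit Gram matrix and repackage its inverse into a double contour integral via generating functions --- computationally more involved, but conceptually the same step that Okounkov's formalism performs once for all.
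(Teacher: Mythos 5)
You are re-deriving a theorem that the paper itself does not prove but cites from Borodin--Kuan \cite{BK}, and the paper's own note even makes the point of comparison explicit: what is stated here is a special case of a theorem in \cite{BK} about determinantal structure of Plancherel-type measures on paths in the Gelfand--Tsetlin graph arising from extreme characters of $U(\infty)$ with \emph{arbitrary} parameters $(\alpha_i^\pm,\beta_i^\pm,\gamma^\pm)$. Your route is genuinely different: you observe that in the special case $\alpha^\pm_i=\beta^\pm_i=\gamma^-=0$ the measure collapses to an Okounkov Schur measure $Z^{-1}\,s_\lambda(1^N)\,s_\lambda(\mathrm{exp}_{\gamma^+})$ and you invoke the by-now-standard double-contour-integral kernel for Schur measures from \cite{OkNASA2001}. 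This is a valid and arguably lighter argument for exactly the specialization that Lemma~\ref{lem:poissonization} produces. The identification $\dim_N\lambda=s_\lambda(1^N)$, $\dim\lambda\cdot(\gamma^+)^n/n!=s_\lambda(\mathrm{exp}_{\gamma^+})$, the Cauchy normalization $e^{\gamma^+N}$, and the automatic truncation to $\ell(\lambda)\le N$ via $s_\lambda(1^N)=0$ are all correct. What you give up relative to \cite{BK} is generality: signatures with negative entries (i.e.\ $\gamma^-\neq 0$ or Frobenius parameters on the negative side) do \emph{not} fit the Schur-measure framework, so your derivation could not be used as a drop-in replacement for the full Borodin--Kuan theorem were one to need it elsewhere.

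One caution on the final matching step: after substituting $\mathcal{H}(1^N;z)=(1-z)^{-N}$ and $\mathcal{H}(\mathrm{exp}_{\gamma^+};z)=e^{\gamma^+z}$ you obtain $(1-u)^{-N}(1-w)^{N}e^{\gamma^+(w^{-1}-u^{-1})}$, i.e.\ the reciprocal of the target integrand's non-monomial part, so it is \emph{not} just a relabeling of $u$ and $w$. A bare swap $u\leftrightarrow w$ also swaps the monomial powers and flips the sign of $1/(u-w)$, yielding $-K(y,x)$ rather than $K(x,y)$, and the sign is not absorbed by a gauge transformation. The reconciliation is instead that which specialization you attach to which side of the $u$/$w$ pair (equivalently which variable you name $u$) is itself a convention, and different sources for ``Okounkov's Schur-measure kernel'' differ exactly here; you need to choose the convention in which the $X$-specialization's $\mathcal{H}$ appears in the numerator over the variable whose contour encircles $0$. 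Once that choice is made consistently the match to \eqref{eq:intFormK} is exact, including the placement of the poles dictating the contours $|u|=r$ around $0$ and $|w-1|=r$ around $1$ with $r<\tfrac12$. State the convention you use for Okounkov's formula (or cite a specific theorem number) so the reader can check the bookkeeping; at present the phrase ``up to a relabeling of the two integration variables'' understates what has to be verified.

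Your Eynard--Mehta alternative via the Weyl and Jacobi--Trudi determinants is also legitimate and reproduces, in this context, the Charlier-ensemble description that Johansson proved and that the paper itself uses in Section~\ref{sec:PointProc} to justify the projection property \eqref{eq:KProj}; so that back-up route is consistent with how the paper thinks about these objects, even though it is not the source cited for Theorem~\ref{thm:intFormK}.
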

{\it Note:} The theorem as stated here is a special case of the theorem of Borodin and Kuan. The theorem in \cite{BK} deals with point processes corresponding to measures on paths in the Gelfand-Tsetlin graph $\mathbb{GT}$ which arise from extreme characters of $U(\infty)$ corresponding to arbitrary parameters $(\alpha_i^\pm,\beta_i^\pm,\gamma^\pm)$.

Given an integer $k$ and a subset $X\subset\mathbb{Z}$, define
\begin{equation*}
c_k(X)=\begin{cases}1,&k\in X\\0,&k\notin X\end{cases}.
\end{equation*}
Given an integer vector $\vec{m}=\{m_1,\ldots,m_r\}$, define
\begin{equation*}
c_{\vec{m}}(X)=c_{m_1}(X)\ldots c_{m_1}(X).
\end{equation*}
For a Young diagram $\lambda$, let
\begin{equation*}
c_{\vec{m}}(\lambda)=c_{\vec{m}}(\mathcal{P}(\lambda)).
\end{equation*}

In terms of the introduced notation the statement of Theorem \ref{thm:intFormK} is equivalent to
\begin{equation*}
\mathbb{E}_{\mathbb{P}^{\gamma^+,0}_N}c_{\{x_1,\dots,x_k\}}=\det[K_{N,\gamma^+}(x_i,x_j)]_{1\leq i,j\leq k}.
\end{equation*}

Another characterization of the poissonization of the measure $\mathbb{P}_N^n$ is as the Charlier orthogonal polynomial ensemble, which was proven by K.~Johansson \cite{JohDisc2001}. Thus, the determinantal process with kernel $K_{N,\gamma^+}$ coincides with the determinantal process with the Christoffel-Darboux kernel of the Charlier ensemble. Since operators given by Christoffel-Darboux kernels are projection operators \cite{OlshDiffOpDetProc}, it follows that the operator given by $K_{N,\gamma^+}$ is also a projection operator. In particular, it follows that
\begin{equation}
\label{eq:KProj}
K_{N,\gamma^+}(x,x)=\sum_{y\in\mathbb{Z}}K_{N,\gamma^+}(x,y)K_{N,\gamma^+}(y,x)
\end{equation}
for all $x$.

\subsubsection{The discrete sine-process}
Define the discrete sine kernel to be the function
\begin{equation*}
%\label{eq:sinKer}
\mathcal{S}(l,t)=\begin{cases}\frac{\sin(lt)}{\pi l},&l\neq 0\\\frac{t}{\pi},&l=0\end{cases}.
\end{equation*}
Let $\mathbb{S}(t)$ be the measure on the power set of $\mathbb{Z}$ such that for any $m_1,\ldots,m_r\in\mathbb{Z}$, we have
\begin{equation}
\label{eq:sinProc1}
\mathbb{S}(t)\{X\subset\mathbb{Z}:m_1,\ldots,m_r\in X\}=\det[\mathcal{S}(m_i-m_j,t)]_{1\leq i,j\leq r}.
\end{equation}
The existence of such a measure follows from the general theory of determinantal point processes \cite{SoshDetP}. The measure $\mathbb{S}(t)$ is a point process on $\mathbb{Z}$ called the discrete sine-process. The condition \eqref{eq:sinProc1} can also be written as
\begin{equation*}
\label{eq:sinProc2}
\mathbb{E}_{\mathbb{S}(t)}(c_{\vec{m}})=\det[\mathcal{S}(m_i-m_j,t)]_{1\leq i,j\leq r}.
\end{equation*}
The measure $\mathbb{S}(t)$ is translation invariant: if for a constant $a$ we denote $a+\vec{m}=(a+m_1,\ldots,a+m_r)$, we have
\begin{equation*}
\mathbb{E}_{\mathbb{S}(t)}(c_{a+\vec{m}})=\mathbb{E}_{\mathbb{S}(t)}(c_{\vec{m}}).
\end{equation*}

\subsubsection{Limit of $K_{N,\gamma^+}$.}
\label{sec:limitOfK}
We show that the determinantal process given by $K_{N,\gamma^+}$ converges to the discrete sine--process when $N\rightarrow\infty,\ \frac{\gamma^+}N\rightarrow c^2$. Define the function
\begin{equation}
\label{eq:defnA}
A_{x}(z)=c^2z^{-1}+xc\ln(z)+\ln(1-z).
\end{equation}
Differentiating $A$ with respect to $z$ we obtain
\begin{equation*}
z^2(z-1)A_{x}'(z)=(1+cx)z^2-(c^2+cx)z+c^2.
\end{equation*}
If $A'_{x}(z)$ has nonreal roots, let $z^+_{x}$ be the root of $A'_{x}(z)$ such that $\Im{z^+_{x}}> 0$:
\begin{equation}
\label{eq:z+}
z^+_{x}=\frac{c^2+cx+\mathfrak{i}c\sqrt{4-(x-c)^2}}{2(1+cx)}.
\end{equation}
If $A'_{x}(z)$ has real roots, $z^+_{x}$ is the larger one. Note that $z^+_{x}\neq 0,1$. Let $z^-_{x}$ be the other root and denote $\phi_{x}=\arg(z^+_{x})$. Notice that
\begin{equation}
\label{eq:normZ+}
|z^\pm_{x}|^2=\frac{c^2}{1+cx}
\end{equation}
and
\begin{equation}
\label{eq:phi}
\phi_{x}=\arccos\left(\frac{c+x}{2\sqrt{1+cx}}\right).
\end{equation}

\begin{theorem}
\label{thm:limitOfK}
Let $\vec{x}=(x_1,\ldots,x_k)$ depend on $N$ in such a way that $x_i-x_j$ are constant and $\lim_{N\rightarrow\infty}\frac{x_j}{Nc}=x'>-\frac 1c$ for all $1\leq i,j\leq k$. If $\lim_{N\rightarrow\infty}\frac{\gamma^+}{N}=c^2$, then
% \begin{equation}
% \label{eq:limitOfK}
% \lim_{N\rightarrow\infty}\det[K_{N,\gamma^+}(x_i,x_j)]_{1\leq i,j\leq k}
% =
% \begin{cases}
% \det[\mathcal{S}(x_i-x_j,\phi_{x'})]_{1\leq i,j\leq k},&|x'-c|<2
% \\1,&x'-c\leq-2\text{ and }c<1
% \\0,&\text{ otherwise}
% \end{cases}.
% \end{equation}
\begin{multline}
\label{eq:limitOfK}
\lim_{N\rightarrow\infty}\det[K_{N,\gamma^+}(x_i,x_j)]_{1\leq i,j\leq k}
\\=
\begin{cases}
\det[\mathcal{S}(x_i-x_j,\phi_{x'})]_{1\leq i,j\leq k},&|x'-c|<2
\\1,&x'-c\leq-2\text{ and }c<1
\\0,&\text{ otherwise}
\end{cases}.
\end{multline}
\end{theorem}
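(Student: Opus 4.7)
The approach is a steepest--descent analysis of the double contour integral \eqref{eq:intFormK}. With $\gamma^+/N\to c^2$, writing $x=Ncx'+O(1)$ and $y=Ncx'+O(1)$, one has
\begin{equation*}
\frac{e^{\gamma^+/u}}{e^{\gamma^+/w}}\frac{u^x}{w^{y+1}}\frac{(1-u)^N}{(1-w)^N}=\exp\bigl(N[A_{x'}(u)-A_{x'}(w)]\bigr)\,R_N(u,w),
\end{equation*}
where $A_{x'}$ is the function defined in \eqref{eq:defnA} and $R_N$ is bounded uniformly on compact subsets of $(\mathbb{C}\setminus\{0,1\})^2$. The critical points of $A_{x'}$ are the roots $z^\pm_{x'}$ given by \eqref{eq:z+}; in the bulk regime $|x'-c|<2$ they are complex conjugates, $|z^\pm_{x'}|=c/\sqrt{1+cx'}$, and $\arg z^+_{x'}=\phi_{x'}$.

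The plan is to deform both contours simultaneously so that they pass through $z^\pm_{x'}$: the $u$--contour $\Gamma_u$ along the steepest descent of $\Re A_{x'}$ (staying in $\{\Re A_{x'}\leq \Re A_{x'}(z^+_{x'})\}$) and the $w$--contour $\Gamma_w$ along the steepest ascent (staying in $\{\Re A_{x'}\geq \Re A_{x'}(z^+_{x'})\}$), arranged so that between the saddles $\Gamma_u$ and $\Gamma_w$ lie respectively just outside and just inside the arc
\begin{equation*}
\mathcal{A}=\{z\in\mathbb{C}:|z|=|z^+_{x'}|,\ \arg z\in(-\phi_{x'},\phi_{x'})\}.
\end{equation*}
Proving the existence of such contours globally, by describing the level curves of $\Re A_{x'}$ around the singularities of $A_{x'}$ at $0$ and $1$, is the main geometric step and the principal obstacle of the argument; the topology changes qualitatively between $c<1$ and $c>1$, so a case analysis is unavoidable.

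Once the contours are in place, a quadratic expansion at each saddle together with the standard Laplace argument shows that the integral over $\Gamma_u\times\Gamma_w$ contributes $O(N^{-1/2})$. Deforming from the original contours $|u|=r$, $|w-1|=r$ to $\Gamma_u,\Gamma_w$ crosses the polar locus $\{u=w\}$ along the arc $\mathcal{A}$, producing a residue contribution. The residue of the integrand at $u=w$ equals $z^{x-y-1}$, since all exponential factors and the $(1-z)^N$ factors cancel, so
\begin{equation*}
K_{N,\gamma^+}(x,y)=\frac{1}{2\pi\mathfrak{i}}\int_{\mathcal{A}}z^{x-y-1}\,dz+O(N^{-1/2}).
\end{equation*}
Parametrizing $z=|z^+_{x'}|e^{\mathfrak{i}\theta}$ for $\theta\in(-\phi_{x'},\phi_{x'})$ and integrating gives $|z^+_{x'}|^{x-y}\mathcal{S}(x-y,\phi_{x'})$. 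The prefactor $|z^+_{x'}|^{x_i-x_j}$ in the $(i,j)$ entry of $[K_{N,\gamma^+}(x_i,x_j)]$ is a diagonal similarity and leaves the determinant invariant, yielding $\det[\mathcal{S}(x_i-x_j,\phi_{x'})]$ in the limit.

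The remaining regimes have real saddle points ($|x'-c|\geq 2$) and are handled by the same deformation machinery. For $c<1$ and $x'-c\leq -2$ the saddles of $A_{x'}$ are real and lie inside the disk, and $\Gamma_u$ must be deformed to enclose both $\Gamma_w$ and the origin; the residue contribution then becomes $\frac{1}{2\pi\mathfrak{i}}\oint_{|w|=\rho}w^{x-y-1}\,dw$, which equals $1$ if $x=y$ and $0$ otherwise, so $K_{N,\gamma^+}(x_i,x_j)$ tends to the Kronecker symbol and the determinant tends to $1$. In the regimes $x'-c>2$ or ($c>1$ and $x'-c\leq -2$) one deforms so that the contours do not cross at all, no residue is collected, and the double integral is exponentially small, so $K_{N,\gamma^+}\to 0$ and the determinant tends to $0$. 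The hard part in all three cases is the contour geometry, which mirrors the piecewise structure of the limit shape $\Omega_c$.
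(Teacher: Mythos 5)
Your proposal follows essentially the same approach as the paper's own proof: both rewrite the integrand in terms of $A_{x'}$, deform to contours passing through the saddles $z^\pm_{x'}$, extract the residue along the arc crossed during deformation, show the remaining double integral is negligible, and invoke gauge cancellation of $|z^+_{x'}|^{x_i-x_j}$ in the determinant, with the frozen and empty regimes distinguished by whether the residue contour winds around $0$, around $1$, or does not arise. The only point your sketch glosses over that the paper briefly addresses is the boundary case $|x'-c|=2$ (double real critical point), which is needed because the stated limit includes $x'-c=-2$; there the contour is taken piecewise linear through the degenerate saddle and its contribution is shown to be negligible.
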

{\it Note:} This is essentially a special case of Theorem 4.6 in \cite{BK}. The theorem in \cite{BK} deals with a broader family of kernels in the limit $\frac{\gamma^+}{N}\rightarrow a>0$ and $\frac{\gamma^-}{N}\rightarrow b>0$. For us $b=0$. The proof presented is an adaptation of the proof in \cite{BK} to the case $b=0$. The main reason for presenting a complete proof here is that we will need not only the result, but parts of the proof as well.

\begin{proof}
To simplify notation, in this proof we write $A(z)$ for $A_{x'}(z)$, $z^+$ for $z_{x'}^+$ and $\phi$ for $\phi_{x'}$. From \eqref{eq:intFormK} we obtain
\begin{align*}
K_{N,\gamma^+}
&
(x'cN,x'cN+l)
\\
&=\frac{1}{(2\pi\mathfrak{i})^2}\oint_{|u|=r}\oint_{|w-1|=r}\frac{e^{N(c^2u^{-1}+x'c\ln(u)+\ln(1-u)+O(\frac 1N))}}{e^{N(c^2w^{-1}+x'c\ln(w)+\ln(1-w)+O(\frac 1N))}}\frac{du\ dw}{u-w}
\\&=\frac{1}{(2\pi\mathfrak{i})^2}\oint_{|u|=r}\oint_{|w-1|=r}\frac{e^{N(A(u)-A(z^+)+O(\frac 1N))}}{e^{N(A(w)-A(z^+)+O(\frac 1N))}}\frac{du\ dw}{u-w}.
\end{align*}
We will use the saddle point method to estimate the contour integrals. For that we need to deform the contours of integration to contours $C_u$ and $C_w$, without crossing $0$, and $0$ or $1$ respectively, in such a way that
\begin{equation*}
\Re(A(z)-A(z^+))\leq 0,\ \forall z\in C_u,
\end{equation*}
and
\begin{equation*}
\Re(A(z)-A(z^+))\geq 0,\ \forall z\in C_w.
\end{equation*}
\begin{figure}[t]
\includegraphics[width=9cm]{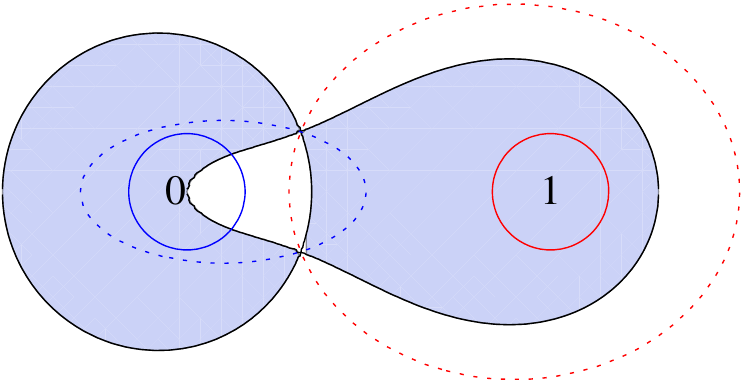}
\caption{\label{fig:contDefBulk} Deformation of contours in the bulk. The shaded region corresponds to $\Re(A(z)-A(z^+))<0$. The solid red (right) and blue (left) contours are the original contours. The dotted red and blue contours are the deformed contours.}
\end{figure}
When $z^\pm$ are not real, i.e. when $|x'-c|<2$, the contours are deformed as in Figure \ref{fig:contDefBulk}. During the deformation contours cross each other along an arc from $z^-$ to $z^+$ which crosses the real axis between $0$ and $1$, thus
% \begin{equation*}
% K_{N,\gamma^+}(x'cN,x'cN+l)
% =\frac{1}{(2\pi\mathfrak{i})^2}\oint_{C_u}\oint_{C_w}\frac{e^{N(A(u)-A(z^+)+O(\frac 1N))}}{e^{N(A(w)-A(z^+)+O(\frac 1N))}}\frac{du\ dw}{u-w}+
% \frac{1}{(2\pi\mathfrak{i})}\oint_{z^-}^{z^+}u^{-1-l}du.
% \end{equation*}
\begin{multline*}
K_{N,\gamma^+}(x'cN,x'cN+l)
\\=\frac{1}{(2\pi\mathfrak{i})^2}\oint_{C_u}\oint_{C_w}\frac{e^{N(A(u)-A(z^+)+O(\frac 1N))}}{e^{N(A(w)-A(z^+)+O(\frac 1N))}}\frac{du\ dw}{u-w}+
\frac{1}{(2\pi\mathfrak{i})}\oint_{z^-}^{z^+}u^{-1-l}du.
\end{multline*}
In the limit $N\rightarrow\infty$ the first integral goes to $0$ since the contribution to the integral from points away from the critical points is exponentially small, while at the critical points the contours $C_u$ and $C_w$ cross transversally. Thus, we obtain
\begin{equation*}
\lim_{N\rightarrow\infty}K_{N,\gamma^+}(x'cN,x'cN+l)
=\frac{1}{(2\pi\mathfrak{i})}\oint_{z^-}^{z^+}u^{-1-l}\ du.
\end{equation*}
Using \eqref{eq:normZ+}, write $z^+=\frac{c}{\sqrt{1+c x'}}e^{\mathfrak{i}\phi}$. Making the change of variable $u=\frac{c}{\sqrt{1+c x'}}e^{\mathfrak{i}\theta}$ and evaluating the remaining integral we obtain
\begin{equation*}
%\label{eq:KAsSinWithGauge}
\lim_{N\rightarrow\infty}K_{N,\gamma^+}(x'cN,x'cN+l)
=\left(\frac{c}{\sqrt{1+c x'}}\right)^{-l}\times\begin{cases}\frac{\sin(\phi l)}{\pi l},&l\neq 0\\\frac{\phi}{\pi},&l=0\end{cases}.
\end{equation*}
When taking a determinant, the gauge terms $\left(\frac{c}{\sqrt{1+c x'}}\right)^{-l}$ cancel, and we obtain \eqref{eq:limitOfK}.

The critical points $z^{\pm}$ are real when $|x'-c|\geq 2$. If $x'-c>2$, then during the deformation the contours do not cross. Thus, no residues are picked up and 
\begin{equation*}
\lim_{N\rightarrow\infty}K_{N,\gamma^+}(x'cN,x'cN+l)=0.
\end{equation*}
If $x'-c<-2$, then during the deformation one contour completely passes over the other as in Figure \ref{fig:contDefFroz}. Hence,
\begin{figure}[t]
\includegraphics[width=13cm]{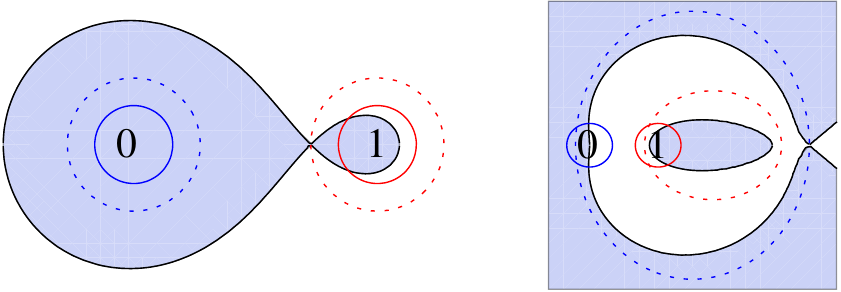}
\caption{\label{fig:contDefFroz} Deformation of contours in frozen regions. In the left figure contours do not cross. In the right figure one contour passes over the other. The shaded region corresponds to $\Re(A(z)-A(z^+))<0$. The solid red(right) and blue(left) contours are the original contours. The dotted red and blue contours are the deformed contours.}
\end{figure}
\begin{equation*}
K_{N,\gamma^+}(x'cN,x'cN+l)
=\frac{1}{(2\pi\mathfrak{i})}\oint_{\tilde{C}}u^{-1-l}\ du
\end{equation*}
for some closed contour $\tilde{C}$. When $c<1$, the contour $\tilde{C}$ winds around $0$ once and we have 
\begin{equation*}
\lim_{N\rightarrow\infty}K_{N,\gamma^+}(x'cN,x'cN+l)=\begin{cases}1,&l=0\\0,&l\neq 0\end{cases}.
\end{equation*}
When $c>1$, then $\tilde{C}$ winds around $1$, whence $K_{N,\gamma^+}(x'cN,x'cN+l)=0$. %Lastly, notice that when $c=1$, $x'-c$ cannot be less than $-2$, since $x'>\frac 1c$. 

\begin{figure}[t]
\includegraphics[width=11cm]{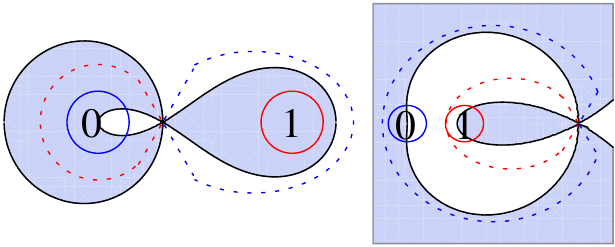}
\caption{\label{fig:contDefCrit} Deformation of contours at the transition points. In the left figure contours do not cross. In the right figure one contour passes over the other. The shaded region corresponds to $\Re(A(z)-A(z^+))<0$. The solid red(right) and blue(left) contours are the original contours. The dotted red and blue contours are the deformed contours and both are linear near the double critical point.}
\end{figure}

In the case $|x'-c|=2$, $A(z)$ has double real critical points and the contours should be deformed as shown in Figure \ref{fig:contDefCrit}. This case can be analyzed similarly by noting that the contribution from the neighborhood of the double critical point is negligible if contours are deformed as shown.
$\qed$
\end{proof}

\subsection{Depoissonization and local statistics of Schur--Weyl measures in the bulk}

\subsubsection{Depoissonization}
A lemma proven by Borodin, Okounkov and Olshanski \cite[Lemma 3.1]{BOO} allows us to pass from asymptotic properties of the poissonized measures to analogous properties of the original measures.
% **** Maybe modify to fit better with my notation****
% \begin{lemma}[Borodin, Okounkov, Olshanski, \cite{BOO}][Depoissonization Lemma]
% \label{lem:depoissonization}
% Let $0<\alpha<\frac 14$. Let $\{f_n\}$ be a sequence of entire functions
% \begin{equation*}
% f_n(z)=e^{-z}\sum_{k\geq 0}\frac{f_{nk}}{k!}z^k,\ n=1,2,\ldots
% \end{equation*}
% and assume that there exist constants $f_\infty$, $C_1$, $C_2$ and $C_3$ such that
% \begin{equation*}
% %\label{eq:depoisCond1}
% \max_{|z|=n}|f_n(z)|\leq C_1e^{C_2\sqrt{n}}
% \end{equation*}
% and
% \begin{equation*}
% %\label{eq:depoisCond2}
% \max_{|z-n|<n^{1-\alpha}}\frac{|f_n(z)-f_\infty|}{e^{C_2|z-n|/\sqrt{n}}}\leq C_3.
% \end{equation*}
% Then there exists a constant $C=C(C_1,C_2,C_3)$, such that for all $n>0$ we have
% \begin{equation*}
% |f_{nn}-f_\infty|\leq C.
% \end{equation*}
% \end{lemma}
% {\it Note:} The version of the Depoissonization Lemma as stated here is from Lemma ****?**** in \cite{Bu}. It is a slight modification of the original lemma proven in \cite{BOO}.
We present a modified version of the Depoissonization Lemma, which appeared in \cite{Bu}.
\begin{lemma}[Corollary 3.3 in \cite{Bu}][Depoissonization Lemma]
\label{lem:depoissonization}
Let $0<\alpha<\frac 14$ and let $\{f_n\}$ be a sequence of entire functions
\begin{equation*}
f_n(z)=e^{-z}\sum_{k\geq 0}\frac{f_{nk}}{k!}z^k,\quad n=1,2,\ldots.
\end{equation*}
Let $\tilde{C}>0$, and let $a_n$ be a sequence of positive numbers satisfying $|a_n|\leq \tilde{C}$. If there exist constants $C_1,C_2,g_1,g_2>0$ and $f_\infty$ such that
\begin{equation*}
%\label{eq:depoisCond1}
\max_{|z|=n}|f_n(z)|\leq C_1e^{g_1\sqrt{n}}
\end{equation*}
and
\begin{equation*}
%\label{eq:depoisCond2}
\max_{|z-n|<n^{1-\alpha}}\frac{|f_n(z)-f_\infty|}{e^{g_2|z-n|/\sqrt{n}}}\leq C_2 a_n,
\end{equation*}
then there exists a constant $C$ depending only on $C_1,C_2,g_1,g_2$ and $\tilde{C}$, such that for all $n>0$ we have
\begin{equation*}
|f_{nn}-f_\infty|\leq C a_n.
\end{equation*}
\end{lemma}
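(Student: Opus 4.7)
The plan is to extract $f_{nn}$ from the generating function $e^z f_n(z) = \sum_{k\geq 0}\frac{f_{nk}}{k!}z^k$ via Cauchy's formula and to compare it to $f_\infty$ using the two hypotheses on different parts of the integration contour. Since $\frac{n!}{2\pi\mathfrak{i}}\oint_{|z|=n} \frac{e^z}{z^{n+1}}dz = 1$ (the coefficient of $z^n$ in $e^z$ is $1/n!$), we have
\begin{equation*}
f_{nn} - f_\infty = \frac{n!}{2\pi\mathfrak{i}}\oint_{|z|=n} \frac{e^z\bigl(f_n(z)-f_\infty\bigr)}{z^{n+1}}\,dz.
\end{equation*}
Parametrizing $z = n e^{\mathfrak{i}\theta}$ and applying Stirling's formula, the modulus of the ``Poisson kernel'' $\frac{n!\,e^z}{2\pi z^{n+1}}\,dz$ becomes $\frac{n!}{2\pi n^n}e^{n\cos\theta}\,d\theta \sim \frac{\sqrt{n}}{\sqrt{2\pi}}\,e^{n(\cos\theta-1)}\,d\theta$, which is a Gaussian concentrated at $\theta=0$ with width of order $1/\sqrt n$.

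The next step is to split the circle into a \emph{good} arc $|\theta| < n^{-\alpha}$ (so that $|z-n| \lesssim n^{1-\alpha}$, matching the domain of the second hypothesis) and the complementary \emph{bad} arc. On the good arc, the second hypothesis gives $|f_n(z) - f_\infty| \leq C_2 a_n\,e^{g_2|z-n|/\sqrt n} \leq C_2 a_n\,e^{g_2 \sqrt{n}\,|\theta|(1+o(1))}$, so the contribution to the integral is controlled by
\begin{equation*}
C_2 a_n \int_{|\theta|<n^{-\alpha}} \frac{\sqrt n}{\sqrt{2\pi}}\,e^{n(\cos\theta-1)}\,e^{g_2\sqrt{n}\,|\theta|}\,d\theta.
\end{equation*}
Using $\cos\theta - 1 \leq -\theta^2/4$ on a neighborhood of $0$ and completing the square in $\sqrt{n}\,\theta$ (equivalently, a change of variable $u = \sqrt n\,\theta$), this integral is bounded by a constant depending only on $g_2$; hence the good arc contributes $O(a_n)$.

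On the bad arc, the first hypothesis gives $|f_n(z)| \leq C_1 e^{g_1\sqrt n}$, and by the second hypothesis at $z=n$ together with $|a_n|\leq \tilde C$ we have $|f_\infty| \leq C_1 e^{g_1\sqrt n} + C_2 \tilde C$. Meanwhile the kernel modulus on $|\theta|\geq n^{-\alpha}$ is bounded by $\frac{\sqrt n}{\sqrt{2\pi}}\exp\bigl(-c\,n^{1-2\alpha}\bigr)$ for some $c>0$. The hypothesis $\alpha < 1/4$ is precisely what makes $n^{1-2\alpha} \gg \sqrt n$, so the product of the $f_n-f_\infty$ bound and the kernel bound decays super-exponentially, in particular faster than any $a_n$ bounded below by a fixed exponentially-small quantity; this contribution is therefore absorbed into the $C a_n$ bound (for all sufficiently large $n$, and adjusting $C$ to handle finitely many small $n$). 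Combining the two estimates yields $|f_{nn} - f_\infty|\leq C a_n$.

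I expect the main obstacle to be the tail estimate: the bound $e^{g_1\sqrt n}$ on $f_n$ is severe, and only the delicate balance $1 - 2\alpha > 1/2$ makes the Gaussian factor $e^{-cn\theta^2}$ win on the bad arc. The rest is a standard saddle-point-style calculation together with Stirling's formula.
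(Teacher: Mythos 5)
The paper does not actually prove this statement; it is quoted verbatim from Bufetov \cite{Bu} (Corollary 3.3 there, a variant of the Depoissonization Lemma of Borodin--Okounkov--Olshanski \cite{BOO}), so there is no proof in this paper against which to compare. Your argument reproduces the standard proof from those references: Cauchy's formula over $|z|=n$ for the $n$-th coefficient, the normalization $\frac{n!}{2\pi\mathfrak{i}}\oint_{|z|=n}\frac{e^z}{z^{n+1}}\,dz=1$, Stirling's asymptotics turning the kernel modulus into $\sqrt{n/(2\pi)}\,e^{n(\cos\theta-1)}\,d\theta$, and a split of the circle at $|\theta|=n^{-\alpha}$, which corresponds exactly to $|z-n|\lesssim n^{1-\alpha}$. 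The good-arc estimate is carried out correctly: after $u=\sqrt{n}\,\theta$ the integral reduces to $\int e^{-cu^2+g_2|u|}\,du$, a constant depending only on $g_2$, so that arc contributes $O(a_n)$.

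The only genuine weakness is in the final absorption step. What your computation actually establishes is $|f_{nn}-f_\infty|\leq C'a_n + D\sqrt{n}\,e^{g_1\sqrt n - c\,n^{1-2\alpha}}$. Since $1-2\alpha>\tfrac12$ the second term does vanish super-exponentially, but the hypotheses impose only $0<a_n\leq\tilde{C}$ with no lower bound, so this second term is not automatically $O(a_n)$. Your parenthetical ``faster than any $a_n$ bounded below by a fixed exponentially-small quantity'' inserts a hypothesis not present in the statement, and ``adjusting $C$ to handle finitely many small $n$'' cannot close the gap uniformly, since $C$ is required to depend only on $(C_1,C_2,g_1,g_2,\tilde{C})$ and not on the sequence $(a_n)$. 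In the present paper this is harmless, because the lemma is only invoked with $a_n$ of polynomial order (Proposition \ref{prop:localStatOfPnN}) or of the form $e^{-c\,n^{3\delta/4-1/4}}$ with $3\delta/4-\tfrac14<\tfrac12<1-2\alpha$ (Proposition \ref{prop:PnNNearEdges}), in which case the bad-arc term really is dominated by $a_n$ and your argument goes through as written. But as a proof of the statement \emph{at the stated level of generality}, the bad-arc step is not complete: for arbitrarily small $a_n$ a finer interpolation/Phragm\'en--Lindel\"of-type argument is needed to show that the constraints near the boundary of the good disk propagate into the bad arc (this is in fact where $\alpha<\tfrac14$ is sharp), and your crude use of the $|z|=n$ growth bound does not supply it.
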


\subsubsection{Depoissonization of $\mathbb{P}^{\gamma^+,0}_N$}
For $\varepsilon\geq 0$ denote
\begin{equation*}
\mathcal{I}_N(\varepsilon)=\left\{k\in\mathbb{Z}:\left|\frac k{cN}-c\right|\leq 2-\varepsilon\right\},
\end{equation*}
and for $\delta>0$ and $K>0$ denote
\begin{equation*}
\mathcal{I}^\pm_N(K,\delta)=\left\{k\in\mathbb{Z}:\left|\frac k{cN}-c\right|\leq 2\pm K N^{\delta-1}\right\}.
\end{equation*}

\begin{lemma}
\label{lem:depoisK1}
There exist constants $C_1,C_2>0$ such that 
\begin{equation}
%\label{eq:depoisK1}
\max_{|\gamma^+|=c^2N}\left|K_{N,\gamma^+}(x,y)\right|\leq C_1 e^{C_2 N} \left(\frac 32\right)^{y-x}
\end{equation}
for all $x$ and $y$ such that $\frac x{cN},\frac y{cN}>-\frac 1c$.
\end{lemma}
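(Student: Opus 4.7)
The plan is to use the contour integral representation from Theorem~\ref{thm:intFormK} directly, deform the contours to carefully chosen radii, and then apply the standard $ML$-inequality to the resulting double contour integral. The specific constant $3/2$ in the bound is the signal: to produce a factor of $(2/3)^x$ from $|u^x|$ I would take $|u| = 2/3$, and to produce (essentially) $(3/2)^y$ from $|w^{-(1+y)}|$ I would keep $|w - 1|$ small, say $|w-1| = 1/6$ so that $|w| \in [5/6, 7/6]$.

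First I would justify the contour deformation. Starting from the original representation with some $r < 1/3$, fix $w$ on the contour $|w-1| = r$; then $|w| \geq 1 - r > 2/3$, so the integrand is analytic as a function of $u$ in the annulus $\{r < |u| < 2/3\}$, its only $u$-singularities being $u = 0$ (essential singularity of $e^{\gamma^+/u}$, and possibly $u^x$) and $u = w$, both lying outside this annulus. Consequently the $u$-contour may be deformed to $|u| = 2/3$ without changing the integral, while the $w$-contour is kept at $|w-1| = 1/6$.

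On the new contours I would bound each factor individually using $|\gamma^+| = c^2 N$: $|e^{\gamma^+/u}| \leq e^{(3/2)c^2 N}$, $|e^{-\gamma^+/w}| \leq e^{(6/5)c^2 N}$, $|(1-u)^N| \leq (5/3)^N$, $|(1-w)^{-N}| = 6^N$, $|u-w|^{-1} \leq (5/6 - 2/3)^{-1} = 6$, and $|u^x| = (2/3)^x$ exactly. All the $N$-dependent factors combine into a single $e^{C_2 N}$, and $(2/3)^x$ contributes exactly the $(2/3)^x$ half of the target $(3/2)^{y-x} = (3/2)^y (2/3)^x$.

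The remaining step, and the one requiring a small case split, is to control $|w^{-(1+y)}|$ by $C \, e^{C_2' N}(3/2)^y$. For $1 + y \geq 0$ the maximum of $|w|^{-(1+y)}$ on the $w$-contour is $(5/6)^{-(1+y)} = (6/5)^{1+y}$, and the elementary inequality $(6/5)(4/5)^y \leq 3/2$ for $y \geq -1$ yields $(6/5)^{1+y} \leq (3/2)(3/2)^y$. For $1 + y < 0$ the maximum is $(7/6)^{-(1+y)}$, and the ratio $(7/6)^{-(1+y)}/(3/2)^y$ equals $(6/7)(7/4)^{|y|}$, which by the hypothesis $y > -N$ is bounded by $(6/7)(7/4)^N \leq e^{N \ln(7/4)}$; this factor is absorbed into $e^{C_2 N}$. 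Putting everything together and multiplying by the lengths of the contours (a constant) yields $|K_{N,\gamma^+}(x,y)| \leq C_1 e^{C_2 N}(3/2)^{y-x}$. The main obstacle is really just bookkeeping: tuning the radii so that the exponential base in the bound is exactly $3/2$ rather than something larger, and noting that the constraint $y > -N$ is precisely what prevents $|w^{-(1+y)}|$ from growing faster than exponentially in $N$ in the second case.
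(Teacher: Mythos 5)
Your proof is correct and is essentially the same as the paper's: both apply the $ML$ inequality to the double contour integral \eqref{eq:intFormK}, reading off the $(3/2)^{y-x}$ factor from $|u^x|$ and $|w^{-1-y}|$ and absorbing the remaining $N$-dependent terms into $e^{C_2 N}$. The paper simply chooses the symmetric parameter $r=1/3$ in the given representation so that $(1-r)^{-1-(y-x)}=(3/2)^{1+y-x}$, absorbing the leftover $(1/2)^x\leq 2^N$ into the exponential via the sign of $\ln r-\ln(1\mp r)$; you instead deform to asymmetric radii $|u|=2/3$, $|w-1|=1/6$ so that $|u^x|=(2/3)^x$ produces the gauge factor directly, at the minor cost of justifying the deformation. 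Both variants work, and in both the hypotheses $x,y>-N$ are what keep the stray powers absorbed into $e^{C_2 N}$.
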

{\it Note}: Henceforth, whenever studying the kernel $K_{N,\gamma^+}$ and not the measure $\mathbb{P}^{\gamma^+,0}_N$, we will allow $\gamma^+$ to be a complex parameter. In particular, this will be the case in depoissonization lemmas.

\begin{proof} Let $\tilde{\gamma}=\frac{\gamma^+}{c^2N}$ and $l=y-x$. Using the contour--integral estimation result which states that for a continuous function $f$ it holds that
\begin{equation*}
\left|\int_C f(z)dz\right|\leq \max_{z\in C}|f(z)|l(C),
\end{equation*}
where $l(C)$ is the length of the contour $C$, we obtain from \eqref{eq:intFormK} that
\begin{equation*}
|K_{N,\gamma^+}(x,x+l)|\leq r^2e^{c^2N\max\left(\Re(\tilde{\gamma}(u^{-1}-w^{-1}))\right)}\frac{r^{x}}{(1\mp r)^{1+x+l}}\frac{(1+r)^N}{r^N}\frac{1}{1-2r},
\end{equation*}
where the plus sign is chosen when $1+x+l<0$.
Since $|\tilde{\gamma}|=1$, $|u|=r$ and $|w|\geq 1-r$, it follows that 
\begin{equation*}
|\tilde{\gamma}(u^{-1}-w^{-1})|\leq \frac 1r+\frac1{1-r},
\end{equation*}
whence 
\begin{equation*}
\max\left(\Re(\tilde{\gamma}(u^{-1}-w^{-1}))\right)\leq\frac1{r(1-r)}.
\end{equation*}
Thus,
\begin{equation*}
|K_{N,\gamma^+}(x,x+l)|\leq\frac{r^2(1-r)^{-1-l}}{1-2r}e^{N\left(\frac xN\ln(r)-\ln(r)+\ln(1+r)-\frac xN\ln(1\mp r)+\frac{c^2}{r(1-r)}\right)}.
\end{equation*}
Since the coefficient of $\frac xN$ is $\ln(r)-\ln(1\mp r)<0$ and $\frac xN$ is bounded below, taking $r=\frac 13$ completes the proof.

$\qed$
\end{proof}

\begin{lemma}
\label{lem:depoisK2}
%Suppose $c\neq 1$.
For any $\delta_0>\frac 13$ and any integer $l$ there exist constants $C_1=C_1(\delta_0,l)>0$ and $C_2=C_2(\delta_0,l)>0$ such that 
\begin{equation*}
\left|K_{N,\gamma^+}(x,x+l)-\left(\frac{c}{\sqrt{1+\frac{x}{N}}}\right)^{-l}\mathcal{S}\left(l,\phi_{\frac x{cN}}\right)\right|\leq \frac{C_1 e^{C_2|\gamma^+-c^2N|}}{2cN-|x-c^2N|}
\end{equation*}
for all $\gamma^+$, all $\delta\in[\delta_0,1)$, all $N\in\mathbb{N}$ and all $x\in\mathcal{I}^-_N(1,\delta)$.
\end{lemma}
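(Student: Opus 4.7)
The plan is to refine the saddle-point analysis from the proof of Theorem \ref{thm:limitOfK} into a quantitative bound uniform in $\gamma^+$. Setting $\Delta := \gamma^+ - c^2 N$ and $x' := x/(cN)$, the integrand in \eqref{eq:intFormK} factors as
\begin{equation*}
\frac{e^{NA_{x'}(u)}}{e^{NA_{x'}(w)}}\, e^{\Delta(1/u - 1/w)}\, \frac{1}{w^{1+l}(u-w)}.
\end{equation*}
First I would deform $|u|=r$ and $|w-1|=r$ to steepest-descent contours $C_u, C_w$ through the saddles $z^\pm_{x'}$ exactly as in Theorem \ref{thm:limitOfK}, picking up the residue along the arc from $z^-_{x'}$ to $z^+_{x'}$. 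That residue evaluates to $(c/\sqrt{1+cx'})^{-l}\mathcal{S}(l, \phi_{x'})$, matching the main term of the lemma because $1 + cx' = 1 + x/N$. For $c \neq 1$, the saddles $z^\pm_{x'}$ remain bounded away from $0$ and $1$ uniformly for $x' \in [c-2, c+2]$, so $C_u, C_w$ may be chosen so that $|1/u - 1/w|$ is uniformly bounded on them; hence $|e^{\Delta(1/u - 1/w)}| \leq e^{C|\Delta|}$ uniformly in $x'$ and $\Delta$.

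It then remains to show that
\begin{equation*}
\tilde{I} := \frac{1}{(2\pi\mathfrak{i})^2}\oint_{C_u}\oint_{C_w} \frac{e^{N(A_{x'}(u) - A_{x'}(w))}}{w^{1+l}(u-w)}\, du\, dw
\end{equation*}
satisfies $|\tilde{I}| \lesssim 1/(2cN - |x - c^2N|)$. The integrand decays exponentially away from $z^\pm_{x'}$, so the dominant contribution comes from local neighborhoods of the saddles. At each saddle, the steepest-descent direction for $u$ is orthogonal to the steepest-ascent direction for $w$; in local polar coordinates $(r,\theta)$ the factor $1/(u-w)$ transforms into a multiple of $e^{-\mathfrak{i}\theta}/r$, so the leading-order Gaussian contribution vanishes by $\int_0^{2\pi} e^{-\mathfrak{i}\theta}\, d\theta = 0$. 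The first non-vanishing contribution arises from the cubic corrections $A'''_{x'}(z^\pm_{x'})(u - z^\pm_{x'})^3$ in the Taylor expansion of $A_{x'}$, yielding a bound of the form $|\tilde{I}| \leq C |A'''_{x'}(z^+_{x'})| / (N|A''_{x'}(z^+_{x'})|^2)$, with $|A'''_{x'}(z^+_{x'})|$ uniformly bounded.

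To convert this into the stated form, I would differentiate the identity $z^2(z-1)A'_{x'}(z) = (1+cx')(z - z^+_{x'})(z - z^-_{x'})$ at $z = z^+_{x'}$ to obtain
\begin{equation*}
A''_{x'}(z^+_{x'}) = \frac{(1+cx')(z^+_{x'} - z^-_{x'})}{(z^+_{x'})^2(z^+_{x'} - 1)},
\end{equation*}
and combine with $|z^+_{x'} - z^-_{x'}| = c\sqrt{4 - (x' - c)^2}/(1 + cx')$ from \eqref{eq:z+} and the uniform bounds on $|z^+_{x'}|, |z^+_{x'} - 1|$ available for $c \neq 1$. This yields $|A''_{x'}(z^+_{x'})|^2 \asymp 4 - (x' - c)^2 \asymp (2cN - |x - c^2 N|)/(cN)$, so $|\tilde I| \lesssim c/(2cN - |x - c^2 N|)$. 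The main obstacle is keeping the saddle-point estimate uniform in $x'$ as the saddles coalesce toward $|x' - c| = 2$: the deformed contours $C_u, C_w$ have to be chosen so that $\Re(A_{x'}(u) - A_{x'}(z^+_{x'}))$ carries a quantitative upper (respectively lower) bound along their entire length, and the quartic and higher-order remainders in the Taylor expansion around $z^\pm_{x'}$ must remain subdominant to the cubic term on the Gaussian scale $r \sim 1/\sqrt{N|A''_{x'}(z^+_{x'})|}$. The hypothesis $\delta_0 > 1/3$ is precisely what makes those remainders tractable.
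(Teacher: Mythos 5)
Your proof takes the same route as the paper: deform to steepest-descent contours through $z^\pm_{x'}$, identify the residue as the main term, observe that the leading Gaussian term of the residual double integral cancels, expand to cubic order, and then translate the asymptotics of $A''_{x'}(z^+_{x'})$ into the factor $(2cN - |x - c^2N|)^{-1}$. Your formula $A''_{x'}(z^+_{x'}) = (1+cx')(z^+_{x'}-z^-_{x'})/[(z^+_{x'})^2(z^+_{x'}-1)]$ and the identification $|A''|^2 \asymp 4 - (x'-c)^2$ are correct, and the dimensional-analysis conclusion $|\tilde I| \lesssim |A'''|/(N|A''|^2) \asymp (2cN-|x-c^2N|)^{-1}$ is the right target.

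However, there is a genuine gap in how you handle the $\Delta := \gamma^+ - c^2N$ dependence. You propose to ``bound $|e^{\Delta(1/u - 1/w)}| \le e^{C|\Delta|}$ uniformly on the contours'' and then estimate $\tilde I$ separately. These two steps cannot be decoupled: the smallness of $\tilde I$ arises from a cancellation of an oscillatory integrand under the antipodal reflection $(t,s) \mapsto (-t,-s)$ (the integrand $\mathfrak{A}(t,s)/(t - e^{\mathfrak{i}(\psi-\xi)}s)$ is odd), and once you take absolute values to pull out $e^{C|\Delta|}$ you have destroyed precisely that cancellation. Without it, you only get $|\tilde I| \lesssim (N|A''|)^{-1/2}$, and since $p$ can grow like $N^{1-\delta}$ this is not bounded by $(pN^\delta)^{-1}$. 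What is needed is a case split as in the paper's proof of Lemma~\ref{lem:depoisK2}: when $|\Delta| \le N^\zeta$ for some $\zeta < (\delta_0+1)/4$, the factor $e^{\Delta(u^{-1}-w^{-1})/(\cdot)}$ is $1 + O(|\Delta|(|t|+|s|))$ on the Gaussian scale, so the odd cancellation survives at leading order and the $\Delta$-perturbation contributes at the same subleading order as the cubic term; when $|\Delta| > N^\zeta$, no cancellation is needed because $e^{C_2|\Delta|}$ with a slightly larger $C_2$ absorbs any polynomial loss. You also silently drop the $w^{-(l+1)}$ factor from $\tilde I$; it is not constant along $C_w$ and its first-order variation $O(s)$ is yet another correction of the same order as the cubic, so it must be kept in the expansion. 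Finally, your ``polar coordinates'' picture of the leading cancellation (invoking $\int_0^{2\pi}e^{-\mathfrak{i}\theta}\,d\theta = 0$ and orthogonality of the steepest-descent and steepest-ascent directions) is not the right model: the paper's cancellation is the oddness of $\mathfrak{A}(t,s)/(t - e^{\mathfrak{i}(\psi-\xi)}s)$ under $(t,s)\mapsto(-t,-s)$, which holds for any pair of contour angles $\xi,\psi$ and does not require orthogonality; your heuristic would lead you astray in justifying the estimate near the edge where $A''(z^+) \to 0$.
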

%{\it Note:} The condition $c\neq 1$ is only needed near the left edge. If $-(2-\varepsilon)\leq\frac x{cN}-c\leq 2-N^{\delta-1}$, for $\varepsilon>0$ independent of $N$, then the result holds for $c=1$ as well.

\begin{proof} Throughout the proof $C_1$ and $C_2$ will denote arbitrary constants that depend only on $\delta_0$ and $l$. In this proof the indices of $A$, $z^\pm$ and $\phi$ are $\frac x{cN}$, however, to simplify notation, we will omit those indices.

Let $\tilde{\gamma}=\gamma^+-c^2N$. For contours $S_1$ and $S_2$ define $K_{S_1,S_2}$ to be
\begin{equation}
\label{eq:intFormKNearN}
K_{S_1,S_2}
=\frac{1}{(2\pi\mathfrak{i})^2}\oint_{S_1}\oint_{S_2}
\frac{e^{N(A(u)-A(z^+))}}{e^{N(A(w)-A(z^+))}}
\frac{e^{\tilde{\gamma}u^{-1}}}{e^{\tilde{\gamma}w^{-1}}}
\frac{1}{w^{l+1}}\frac{du\ dw}{u-w}.
\end{equation}
% \begin{equation}
% \label{eq:intFormKNearN}
% K_{N,\gamma^+}(xcN,xcN+l)
% =\frac{1}{(2\pi\mathfrak{i})^2}\oint_{|u|=r}\oint_{|w-1|=r}
% \frac{e^{N(c^2u^{-1}+xc\ln(u)+\ln(1-u))}}{e^{N(c^2w^{-1}+xc\ln(w)+\ln(1-w))}}
% \frac{e^{\tilde{\gamma}u^{-1}}}{e^{\tilde{\gamma}w^{-1}}}
% \frac{1}{w^{l+1}}\frac{dudw}{u-w}.
% \end{equation}
It follows from \eqref{eq:intFormK} that
\begin{equation*}
K_{N,\gamma^+}(x,x+l)=K_{|u|=r,|w-1|=r},
\end{equation*}
where $0<r<\frac 12$. It follows from the proof of Theorem \ref{thm:limitOfK} that
\begin{equation*}
K_{N,\gamma^+}(x,x+l)-\frac{1}{(2\pi\mathfrak{i})}\oint_{z^-}^{z^+}u^{-1-l}\ du=K_{C_u,C_w}.
\end{equation*}

Let $\frac x{cN}-c=\pm(2-pN^{\delta-1})$ for some $p>0$. From \eqref{eq:defnA} and \eqref{eq:z+} we obtain
\begin{equation}
\label{eq:asympOfZ+}
z^+=\frac c{c\pm 1}+\mathfrak{i}\frac{c\sqrt{p}N^{\frac{\delta-1}2}}{(c\pm 1)^2}+O\left(N^{\delta-1}\right),
\end{equation}
\begin{equation}
\label{eq:asympOfA''}
A''(z^+)=\mp\mathfrak{i}\frac{2(c\pm 1)^3\sqrt{p}N^{\frac{\delta-1}2}}{c}+O\left(N^{\delta-1}\right),
\end{equation}
and
\begin{equation}
\label{eq:asympOfA'''}
A'''(z^+)=\mp\frac{2(c\pm 1)^5}{c^2}-\mathfrak{i}\frac{6(c\mp 2)(c\pm 1)^4\sqrt{p}N^{\frac{\delta-1}2}}{c^2}+O\left(N^{\delta-1}\right).
\end{equation}

Since $A'(z^+)=0$, Taylor's theorem implies
\begin{align*}
A(z^++e^{\mathfrak{i}\xi}t)-A(z^+)=&\frac 12 (e^{\mathfrak{i}\xi}t)^2 A''(z^+) + \frac{(e^{\mathfrak{i}\xi}t)^3}{3!}A'''(z^+)
% \\&
+\frac{(e^{\mathfrak{i}\xi}t)^4}{4!}R_3(z^++e^{\mathfrak{i}\xi}t),
\end{align*}
where
\begin{equation*}
R_3(u)=\frac 1{2\pi\mathfrak{i}}\oint\frac{A(z)}{(z-z^+)^4(z-u)}\ dz,
\end{equation*}
and the last integration is over a closed contour that contains both $z^+$ and $u$.

We can assume that the contours $C_u$ and $C_w$ are linear near the critical points $z^\pm$, i.e. that there exist $\xi,\psi\in(0,\frac{\pi}2)\cup(\frac{\pi}2,\pi)$ and $t_0>0$ such that the contours $C_u$ and $C_w$ coincide respectively with $z^\pm+e^{\pm\mathfrak{i}\xi}t$ and $z^\pm+e^{\pm\mathfrak{i}\psi}t$, when $|t|<t_0$, $t\in\mathbb{R}$. We have
\begin{equation*}
\Re(A(z^++e^{\mathfrak{i}\xi}t)-A(z^+))<0<\Re(A(z^++e^{\mathfrak{i}\psi}t)-A(z^+))
\end{equation*}
for all $0<|t|<t_0$, $t\in\mathbb{R}$. Let $\beta$ be a constant such that $\frac 13<\beta<\frac{\delta_0+1}{4}$. Since $\Im(z^+)>N^{-\beta}$, we can divide the contour $C_u$ into three sections as follows:
\begin{align}
\label{eq:contourPieces}
\nonumber C_{u,\pm}=z^{\pm}+&e^{\pm\mathfrak{i}\xi}t\text{ when }|t|<N^{-\beta}
\\&\text{ and }
\\\nonumber C'_u=C_u&\backslash (C_{u,+}\cup C_{u,-}).
\end{align}
\begin{figure}[t]
\includegraphics[width=7cm]{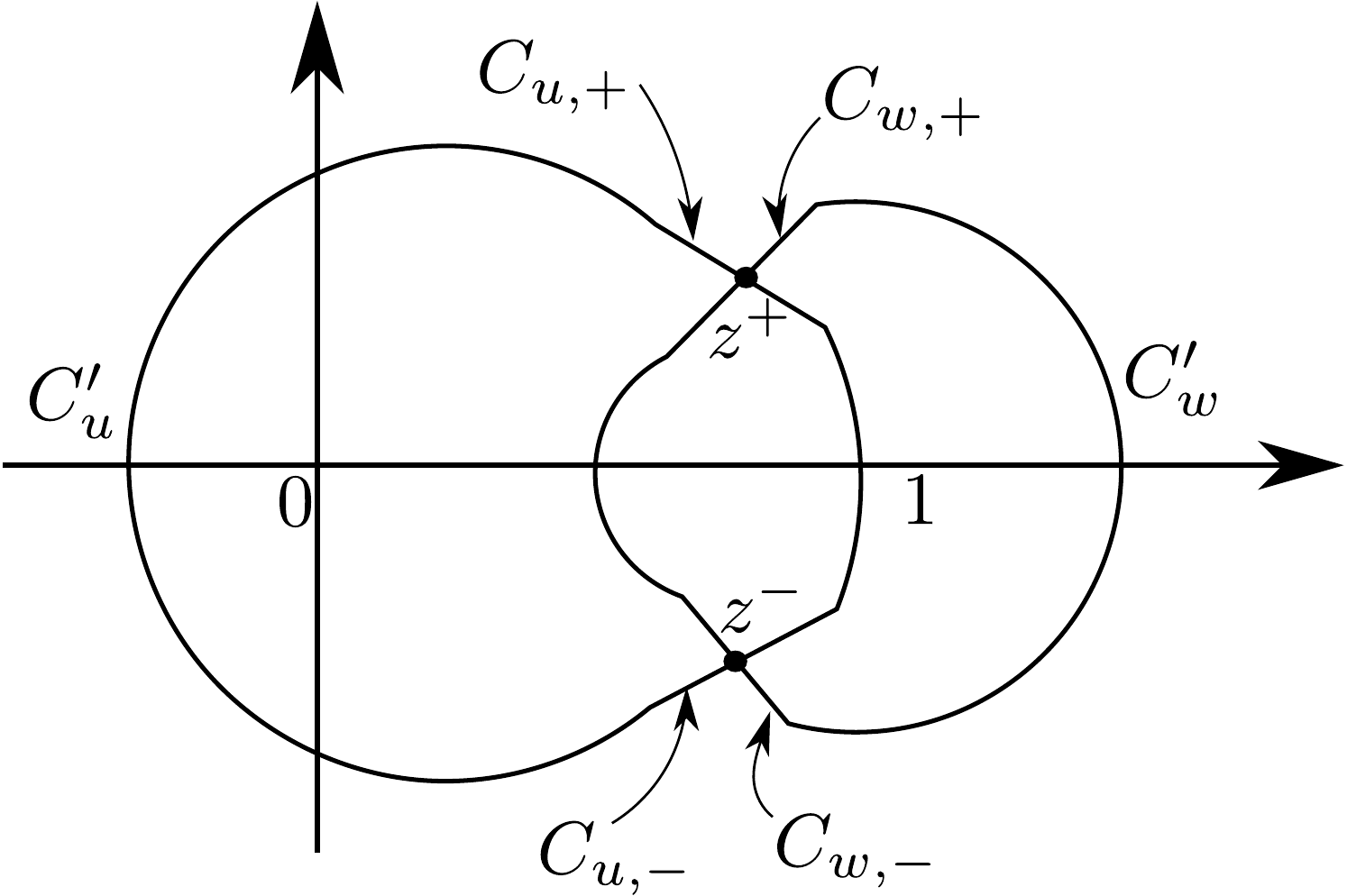}
\caption{\label{fig:ContourSections} Sections of the contours $C_u$ and $C_w$.}
\end{figure}
Similarly, divide $C_w$ into three sections $C_{w,\pm}$ and $C'_w$ (see Figure \ref{fig:ContourSections}). We estimate the contribution of each section separately. We will present the proofs of the following two estimates:
\begin{align}
\label{eq:contLarge}
|K_{C'_u,C_w}|&<\frac{C_1e^{C_2|\tilde{\gamma}|}}{2cN-|x-c^2N|},
\\\label{eq:contSm+}
|K_{C_{u,+},C_{w,+}}|&<\frac{C_1e^{C_2|\tilde{\gamma}|}}{2cN-|x-c^2N|}.
\end{align}
Estimates for the other sections can be obtained completely similarly.

We start with proving \eqref{eq:contLarge}. Since the leading term of $A''(z^+)$ is of order $N^{\frac{\delta-1}2}$, $\xi\neq\frac{\pi}2$ and $R_3(u)$ is bounded in a neighborhood of $z^+$, there exists $D_1\in(0,t_0)$ such that
\begin{equation*}
\Re(A(u)-A(z^+))
\leq
\begin{cases}
-D_2\sqrt{p}N^{\frac{\delta-1}2}t^2, &u=z^\pm+e^{\pm\mathfrak{i}\xi}t\text{ and }|t|\leq D_1
\\-D_3,&u\in C'_u\cap\{u:|u-z^\pm|>D_1\}
\end{cases}
\end{equation*}
for some positive constants $D_2$ and $D_3$.

Since $|u|$ and $|w|$ are bounded away from zero and are bounded above along the contours $C_u$ and $C_w$, it follows that 
\begin{equation*}
\left|\frac 1{w^{l+1}}\frac{e^{\tilde{\gamma}u^{-1}}}{e^{\tilde{\gamma}w^{-1}}}\right|\leq C_1e^{C_2\tilde{\gamma}}.
\end{equation*}
Since $|u-w|^{-1}<N^{\beta}$ along the contours $C'_u$ and $C_w$, and $\left|e^{N(A(w)-A(z^+))}\right|>1$ for all $w\in C_w$, we obtain
\begin{equation}
\label{eq:contLL}
|K_{C'_u\cap\{u:|u-z^\pm|>D_1\},C_w}|\leq C_1N^{\beta}e^{C_2\tilde{\gamma}} e^{-D_3 N}.
\end{equation}
For the remaining part of the contour $C'_u$ we obtain
\begin{equation}
\label{eq:contLl1}
|K_{C'_u\cap\{u:|u-z^+|<D_1\},C_w}|\leq C_1e^{C_2\tilde{\gamma}}N^\beta\int_{N^{-\beta}}^{D_1}e^{-D_2\sqrt{p}N^{\frac{\delta+1}2}t^2}\ dt.
\end{equation}
Making the change of variable $t'=\sqrt{D_2\sqrt{p}}N^{\frac{\delta+1}4} t$, we obtain 
\begin{align}
\label{eq:contLl2}
\int_{N^{-\beta}}^{D_1}e^{-D_2\sqrt{p}N^{\frac{\delta+1}2}t^2\ }dt
&=\frac {N^\beta}{\sqrt{D_2\sqrt{p}}N^{\frac{\delta+1}4}}\int_{\sqrt{D_2\sqrt{p}}N^{\frac{\delta+1}4-2\beta}}^{D_1 \sqrt{D_2\sqrt{p}}N^{\frac{\delta+1}4}}e^{-t'^2}\ dt'
\\\nonumber &<
%\frac 1{\sqrt{D_2\sqrt{p}}N^{\frac{\delta+1}4}}\int_{\sqrt{D_2\sqrt{p}}N^{\frac{\delta+1}4-\beta}}^{\infty}e^{-\sqrt{D_2\sqrt{p}}N^{\frac{\delta+1}4-\beta}t'}dt'=
\frac 1{D_2\sqrt{p} N^{\frac{\delta+1}2-\beta}}e^{-D_2\sqrt{p} N^{\frac{\delta+1}2-2\beta}}.
\end{align}
Since $\beta<\frac{\delta+1}4$, combining \eqref{eq:contLL}, \eqref{eq:contLl1} and \eqref{eq:contLl2} we obtain \eqref{eq:contLarge}.

We now move on to proving \eqref{eq:contSm+}. We will consider two cases: when $|\tilde{\gamma}|$ is large and when it is small. Let $\zeta\in(0,\beta)$ and suppose $|\tilde{\gamma}|<N^\zeta$.

Since $A'(z^+)=0$, we obtain
\begin{equation}
\label{eq:ATaylor2}
A(z^+ + e^{\mathfrak{i}\xi}t)-A(z^+)=\frac 12 e^{\mathfrak{i}2\xi}A''(z^+) t^2 + t^3\left(\frac 16 e^{\mathfrak{i}3\xi} A'''(z^+)+O(t)\right).
\end{equation}
Since $\beta>\frac 13$, it follows that $Nt^3=o(1)$, whence
\begin{equation}
\label{eq:estCubicTerm}
\frac{e^{Nt^3\left(\frac 16 e^{\mathfrak{i}3\xi} A'''(z^+)+O(t)\right)}}{e^{Ns^3\left(\frac 16 e^{\mathfrak{i}3\psi} A'''(z^+)+O(s)\right)}}=1+O(|t|^3+|s|^3)N.
\end{equation}
Since $|\tilde{\gamma}t|<N^{\zeta-\beta}$ and $\zeta-\beta<0$, we obtain
\begin{equation}
\label{eq:estGammaTerm}
\frac{e^{\tilde{\gamma}(z^++e^{\mathfrak{i}\xi}t)^{-1}}}
{e^{\tilde{\gamma}(z^++e^{\mathfrak{i}\psi}s)^{-1}}}
=\frac{e^{\tilde{\gamma}(z^+)^{-1}+O(|\tilde{\gamma}t|)}}
{e^{\tilde{\gamma}(z^+)^{-1}+O(|\tilde{\gamma}s|)}}
=1+|\tilde{\gamma}|O(|t|+|s|).
\end{equation}
Define
\begin{equation*}
\mathfrak{A}(t,s)=\frac{e^{N\frac 12 e^{\mathfrak{i}2\xi}A''(z^+) t^2}}{e^{N\frac 12 e^{\mathfrak{i}2\psi}A''(z^+)s^2}}.
\end{equation*}
Since the function
\begin{equation*}
\mathfrak{A}(t,s)\frac 1{t-e^{\mathfrak{i}(\psi-\xi)}s}
\end{equation*}
is an odd function, it follows that
\begin{equation}
\label{eq:intOddZero}
\int_{-N^{-\beta}}^{N^{-\beta}}\int_{-N^{-\beta}}^{N^{-\beta}}\mathfrak{A}(t,s)\frac {dt\ ds}{t-e^{\mathfrak{i}(\psi-\xi)}s}=0.
\end{equation}
Using \eqref{eq:contourPieces}, \eqref{eq:ATaylor2}, \eqref{eq:estCubicTerm}, \eqref{eq:estGammaTerm} and \eqref{eq:intOddZero}, and noting that
\begin{equation*}
%\label{eq:estWterm}
\frac{1}{(z^+ +e^{\mathfrak{i}\psi}s)^{l+1}}=\frac{1}{(z^+)^{l+1}}+O(s),
\end{equation*}
rewrite \eqref{eq:intFormKNearN} as
% \begin{equation*}
% K_{C_{u,+},C_{w,+}}=\int_{-N^{-\beta}}^{N^{-\beta}}\int_{-N^{-\beta}}^{N^{-\beta}}
% \mathfrak{A}(t,s)\left(O(|t|^3+|s|^3)N+|\tilde{\gamma}|O(|t|+|s|)+O(|s|)\right)\frac{dt\ ds}{t-e^{\mathfrak{i}(\psi-\xi)}s}.
% \end{equation*}
\begin{multline*}
K_{C_{u,+},C_{w,+}}=\int_{-N^{-\beta}}^{N^{-\beta}}\int_{-N^{-\beta}}^{N^{-\beta}}
\mathfrak{A}(t,s)\\\times\left(O(|t|^3+|s|^3)N+|\tilde{\gamma}|O(|t|+|s|)+O(|s|)\right)\frac{dt\ ds}{t-e^{\mathfrak{i}(\psi-\xi)}s}.
\end{multline*}
Making the change of variable
\begin{equation}
\label{eq:changeVarTS}
t'=p^{\frac 14}N^{\frac {\delta+1}4}t,\quad s'=p^{\frac 14}N^{\frac {\delta+1}4}s,
\end{equation}
and using \eqref{eq:asympOfA''} we obtain
\begin{multline*}
%\label{eq:estTotal}
|K_{C_{u,+},C_{w,+}}|\leq\frac 1{p^{\frac 12}N^{\frac{\delta+1}2}}
\int_{-p^{\frac 14}N^{\frac{\delta+1}4-\beta}}^{p^{\frac 14}N^{\frac{\delta+1}4-\beta}}\int_{-p^{\frac 14}N^{\frac{\delta+1}4-\beta}}^{p^{\frac 14}N^{\frac{\delta+1}4-\beta}}
e^{-D_4(t'^2+s'^2)}
\\\times\left(\frac{1}{p^{\frac 12}N^{\frac{\delta-1}2}}O(|t'|^3+|s'|^3)+|\tilde{\gamma}|O(|t'|+|s'|)+O(|s'|)\right)\frac{dt'\ ds'}{|t'-e^{\mathfrak{i}(\psi-\xi)}s'|},
\end{multline*}
where $D_4$ is a positive constant. Since the remaining integral is $O(N^{-\frac{\delta-1}2}+|\tilde{\gamma}|)$, we obtain
\begin{equation*}
|K_{C_{u,+},C_{w,+}}|\leq\frac {C_1}{pN^{\delta}}e^{C_2|\tilde{\gamma}|}.
\end{equation*}
This completes the proof of \eqref{eq:contSm+} when $|\tilde{\gamma}|<N^{\zeta}$.

The case $|\tilde{\gamma}|>N^\zeta$ is much simpler. From \eqref{eq:intFormKNearN} it follows that
\begin{equation}
\label{eq:largeGamma}
|K_{C_{u,+},C_{w,+}}|\leq C_1e^{C_2|\tilde{\gamma}|}
\int_{-N^{-\beta}}^{N^{-\beta}}\int_{-N^{-\beta}}^{N^{-\beta}}
|\mathfrak{A}(t,s)|\frac{dt\ ds}{|t-e^{\mathfrak{i}(\psi-\xi)}s|}.
\end{equation}
Making the change of variable \eqref{eq:changeVarTS} it is easy to see that the remaining integral is $O(1)$. Since $|\tilde{\gamma}|>N^\zeta$, \eqref{eq:contSm+} follows from \eqref{eq:largeGamma}.

$\qed$
\end{proof}

\begin{proposition}[Local statistics of $\mathbb{P}_N^n$ in the bulk]
\label{prop:localStatOfPnN}
For any $\varepsilon>0$ and any integer $L>0$, there exists a positive constant $C=C(\varepsilon,L)$ such that for all $x\in\mathcal{I}_N(\varepsilon)$, all integer vectors $\vec{l}$ satisfying $|\vec{l}|\leq L$, all $N\in\mathbb{N}$ and $n=\lfloor c^2N^2\rfloor$, we have
\begin{equation*}
%\label{eq:localStatOfPnN}
\left|\mathbb{E}_{\mathbb{P}_N^n}(c_{x+\vec{l}})-\mathbb{E}_{\mathbb{S}(\phi_{\frac x{cN}})}(c_{\vec{l}})\right|\leq\frac{C(\varepsilon,L)}{N}.
\end{equation*}
\end{proposition}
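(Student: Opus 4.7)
The plan is to apply the depoissonization lemma (Lemma~\ref{lem:depoissonization}) to the entire function
\begin{equation*}
f_N(z) := e^{-z}\sum_{n\ge 0}\frac{z^n}{n!}\,\mathbb{E}_{\mathbb{P}^n_N}(c_{x+\vec l}),
\end{equation*}
which by Lemma~\ref{lem:poissonization} and Theorem~\ref{thm:intFormK} equals $\det[K_{N,z/N}(x+l_i,x+l_j)]_{i,j}$. Taking $f_\infty:=\mathbb{E}_{\mathbb{S}(\phi_{x/(cN)})}(c_{\vec l})=\det[\mathcal{S}(l_j-l_i,\phi_{x/(cN)})]$ and $n:=\lfloor c^2N^2\rfloor$, the proposition is exactly the estimate $|f_N(n)-f_\infty|=O(1/N)$ output by the depoissonization lemma with error sequence $a_n=1/\sqrt n$, so the two hypotheses of that lemma must be verified.

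The growth hypothesis follows from Lemma~\ref{lem:depoisK1}: on $|z|=n$ one has $|\gamma^+|=n/N\le c^2N$, so each entry of the $L\times L$ determinant is bounded by $C_1e^{C_2 N}(3/2)^{l_j-l_i}$, and Hadamard's inequality (using $|l_j-l_i|\le 2L$) yields $\max_{|z|=n}|f_N(z)|\le C_1'e^{g_1\sqrt n}$ for a suitable $g_1$.

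For the approximation hypothesis, fix $\delta_0\in(1/3,1/2)$. For $N$ with $N^{\delta_0-1}\le\varepsilon$ we have $\mathcal{I}_N(\varepsilon)\subset\mathcal{I}_N^-(1,\delta_0)$, so Lemma~\ref{lem:depoisK2}, combined with $2cN-|x-c^2N|\ge\varepsilon cN$, gives
\begin{equation*}
\Bigl|K_{N,\gamma^+}(x+l_i,x+l_j)-\bigl(\tfrac{c}{\sqrt{1+(x+l_i)/N}}\bigr)^{-(l_j-l_i)}\mathcal{S}\bigl(l_j-l_i,\phi_{(x+l_i)/(cN)}\bigr)\Bigr|\le\frac{Ce^{C_2|\gamma^+-c^2N|}}{N}.
\end{equation*}
The scalar prefactor differs from the $i$--independent $(c/\sqrt{1+x/N})^{l_i-l_j}$ by $O(1/N)$, and likewise $\mathcal{S}(\cdot,\phi_{(x+l_i)/(cN)})$ from $\mathcal{S}(\cdot,\phi_{x/(cN)})$ by $O(1/N)$ (since $\mathcal{S}$ is Lipschitz in its second argument and $y\mapsto\phi_y$ is smooth near $y=x/(cN)$). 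The remaining gauge $(c/\sqrt{1+x/N})^{l_i-l_j}$ cancels in the determinant exactly as in the proof of Theorem~\ref{thm:limitOfK}, so multilinearity yields $|f_N(z)-f_\infty|\le C'e^{C_2|\gamma^+-c^2N|}/N$. For $|z-n|<n^{1-\alpha}$ with $\alpha\in(0,1/4)$, one has $|\gamma^+-c^2N|\le|z-n|/N+O(1/N)$ and $\sqrt n=cN+O(1/N)$, so this exponent is absorbed into $g_2|z-n|/\sqrt n+O(1)$ for any $g_2>c^2C_2$; the second hypothesis thus holds with $a_n=C''/\sqrt n$.

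Lemma~\ref{lem:depoissonization} now produces $|f_N(n)-f_\infty|\le C(\varepsilon,L)/N$, proving the proposition; the finitely many small $N$ for which $N^{\delta_0-1}>\varepsilon$ are absorbed into the constant via the trivial bound $c_{x+\vec l}\in\{0,1\}$. The main technical obstacle is quantitatively matching the exponential $e^{C_2|\gamma^+-c^2N|}$ from Lemma~\ref{lem:depoisK2} to the template $e^{g_2|z-n|/\sqrt n}$ demanded by the depoissonization lemma; this works out precisely because of the scale compatibility $|\gamma^+-c^2N|\cdot N\sim|z-n|$ together with $\sqrt n\sim cN$, which is also what converts the $(\varepsilon cN)^{-1}$ decay in Lemma~\ref{lem:depoisK2} into the final uniform $O(1/N)$ bound over $x\in\mathcal{I}_N(\varepsilon)$.
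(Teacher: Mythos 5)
Your proposal is correct and follows exactly the paper's own route: the paper proves this proposition in one line by invoking the depoissonization Lemma~\ref{lem:depoissonization} with the growth hypothesis furnished by Lemma~\ref{lem:depoisK1} and the approximation hypothesis by Lemma~\ref{lem:depoisK2}, which is precisely the skeleton you have fleshed out (identifying the poissonized expectation $f_N(z)=\det[K_{N,z/N}(x+l_i,x+l_j)]$ via Lemma~\ref{lem:poissonization} and Theorem~\ref{thm:intFormK}, passing from per-entry estimates to the determinant by multilinearity after cancelling the rank-one gauge, and matching $e^{C_2|\gamma^+-c^2N|}$ with $e^{g_2|z-n|/\sqrt n}$ using $\gamma^+=z/N$ and $\sqrt n\sim cN$). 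One small slip: since $C_2|\gamma^+-c^2N|\le C_2|z-n|/N+O(1/N)$ and $\sqrt n/N\le c$, the required threshold is $g_2\ge cC_2$ rather than $g_2>c^2C_2$ (your constant is too small when $c<1$), but as any sufficiently large $g_2$ works this does not affect the argument.
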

\begin{proof}
This follows by applying the depoissonization Lemma \ref{lem:depoissonization} to Theorem \ref{thm:limitOfK}. Lemmas \ref{lem:depoisK1} and \ref{lem:depoisK2} show that the necessary conditions for Lemma \ref{lem:depoissonization} to apply are satisfied.

$\qed$
\end{proof}

\subsection{Statistics near edges}
We now prove that the probability of Young diagrams which extend beyond the limit shape at either edge by a distance more than $N^{\delta}$ with $\delta>\frac 13$ are exponentially small. We will need the following lemma, which gives an estimate for $K_{N,\gamma^+}(x,x)$ near the edges.
\begin{lemma}
\label{lem:KExtremes}
For any $\delta_0>\frac 13$ there exist constants $C_1,C_2,C_3>0$ such that for all $\delta\in[\delta_0,1)$, for all $\gamma^+$, for all $N\in\mathbb{N}$, and $x\notin \mathcal{I}^+_N(1,\delta), x>-N$, we have
\begin{equation}
\label{eq:KEdgeSmC}
\left|1-K_{N,\gamma^+}(x,x)\right|\leq C_1 e^{-C_2 N^{\frac{3\delta}2-\frac 12}}e^{C_3|\gamma^+-c^2N|},\text{ if }0<c<1 \text{ and }x<0,
\end{equation}
and
\begin{equation}
\label{eq:KEdgeLgC}
\left|K_{N,\gamma^+}(x,x)\right|\leq C_1 e^{-C_2 N^{\frac{3\delta}2-\frac 12}}e^{C_3|\gamma^+-c^2N|},\text{ if }1<c \text{ or }x>0.
\end{equation}
\end{lemma}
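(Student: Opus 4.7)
\textbf{Proof proposal for Lemma \ref{lem:KExtremes}.}

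The plan is to adapt the saddle-point analysis of Theorem \ref{thm:limitOfK} and Lemma \ref{lem:depoisK2} to the frozen regime, where both critical points $z^\pm$ of $A = A_{x/(cN)}$ are real. Starting from the representation \eqref{eq:intFormKNearN} with $l=0$ and $\tilde\gamma := \gamma^+ - c^2N$, I will deform the contours $|u|=r$, $|w-1|=r$ through the real saddles as in the frozen cases of the proof of Theorem \ref{thm:limitOfK} (Figure \ref{fig:contDefFroz}). The topological analysis carried out there shows that the pole at $u=w$ contributes $\frac{1}{2\pi\mathfrak{i}}\oint u^{-1}\,du$, which equals $1$ precisely when the resulting loop winds once around $0$ (the case $c<1$, $x<0$) and $0$ otherwise (either $c>1$ with $x<0$, or $x>0$ for any $c$). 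The task then reduces to bounding the remaining double integral $K_{C_u,C_w}$ over the deformed contours.

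Writing $\bigl|\tfrac{x}{cN} - c\bigr| = 2 + \rho$ with $\rho \geq N^{\delta-1}$, I compute the saddle geometry from \eqref{eq:z+}: the two real saddles satisfy $z^+ - z^- \asymp \sqrt{\rho}$ and $|A''(z^\pm)| \asymp \sqrt{\rho}$. Since the edge $\rho=0$ is a degenerate critical point of $A$ at which $A'''$ is of order $1$, a cubic Taylor expansion around the midpoint yields the crucial height difference
\begin{equation*}
|A(z^+) - A(z^-)| \asymp \rho^{3/2}, \qquad \text{hence} \qquad N\bigl|A(z^+) - A(z^-)\bigr| \gtrsim N^{3\delta/2 - 1/2}.
\end{equation*}
This is the source of the claimed exponential decay rate.

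The core of the argument is then to choose $C_u$ to pass through the saddle at which $\Re A$ is smaller along a steepest-descent direction, and $C_w$ through the other saddle along a steepest-ascent direction (which saddle plays which role depends on the subcase but is dictated by the topology of Figure \ref{fig:contDefFroz}). With this choice,
\begin{equation*}
\Re\bigl(A(u) - A(w)\bigr) \leq -\tilde C \rho^{3/2} \qquad \text{uniformly for } (u,w)\in C_u\times C_w.
\end{equation*}
The $|u-w|^{-1}$ singularity is controlled by the minimum contour separation $\gtrsim \sqrt{\rho}$, producing at most a polynomial factor in $N$; and $\bigl|e^{\tilde\gamma(u^{-1}-w^{-1})}\bigr| \leq e^{C_3|\tilde\gamma|}$ since $u,w$ stay in a compact region bounded away from $0$ and $1$. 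Combining these estimates gives $|K_{C_u,C_w}| \leq C_1 e^{-C_2 N\rho^{3/2}}e^{C_3|\tilde\gamma|}$, which together with $\rho\geq N^{\delta-1}$ yields the desired bounds \eqref{eq:KEdgeSmC} and \eqref{eq:KEdgeLgC}.

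The principal obstacle is the geometric verification in each of the three subcases --- right edge (any $c$), left edge with $c<1$, and left edge with $c>1$. In each one must exhibit explicit deformed contours $C_u$ (enclosing $0$, avoiding $1$) and $C_w$ (enclosing $1$, avoiding $0$) that simultaneously (i) pass through the two distinct real saddles in the prescribed steepest-descent/ascent directions, (ii) yield the correct winding number of the deformed loop around $0$ as predicted by the residue analysis ($1$ in the case $c<1$, $x<0$, and $0$ in the other cases), and (iii) satisfy the global sign inequality $\Re A(u) \leq \Re A(z_{\mathrm{low}}) < \Re A(z_{\mathrm{high}}) \leq \Re A(w)$ uniformly along the full contours. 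This requires drawing the level curves of $\Re A$ carefully, analogous to the shaded regions of Figures \ref{fig:contDefBulk} and \ref{fig:contDefFroz}, and verifying that the ``low sea'' and ``high sea'' of $\Re A$ are separated by the chain of saddle points in the required way; this geometric check, rather than any single analytic estimate, is the main work.
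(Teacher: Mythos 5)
Your proposal is sound, and it has the correct quantitative input. Writing $\bigl|\tfrac{x}{cN}-c\bigr| = 2 + \rho$ with $\rho \geq N^{\delta-1}$, the two real saddles of $A_{x/(cN)}$ indeed satisfy $z^+ - z^- \asymp \sqrt\rho$ and $|A''(z^\pm)| \asymp \sqrt\rho$, and integrating $A'$ between them gives $|A(z^+)-A(z^-)|\asymp\rho^{3/2}$, hence $N|A(z^+)-A(z^-)|\gtrsim N^{3\delta/2-1/2}$; your winding-number bookkeeping for the residue ($1$ precisely when $c<1$ and $x<0$, else $0$) also matches the paper's. Where you diverge from the paper is in the contour construction, and it is a genuine, if closely related, variant. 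You propose a two-saddle scheme: $C_u$ through the lower saddle along steepest descent, $C_w$ through the higher saddle along steepest ascent, so the exponential gain is the inter-saddle height gap. The paper works entirely at the \emph{lower} saddle $z^-$: after the deformation that extracts the residue, it takes $C_u$ on the boundary of the bounded component of $\{\Re A < \Re A(z^-)\}$ and splits $C_w$ into an outer loop $C_w^o$ with $\Re\bigl(A(w)-A(z^-)\bigr)$ bounded below by a constant (contributing $e^{-cN}$) and an inner loop $C_w^i$ taken as a \emph{level curve} of $\Re A$ at altitude $\asymp N^{(3\delta-3)/2}$ above $\Re A(z^-)$, separated from $C_u$ by $\gtrsim N^{(\delta-1)/2}$. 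The two constructions produce the same $N^{3\delta/2-1/2}$ rate — the height at distance $\asymp\sqrt\rho$ from a saddle with curvature $\asymp\sqrt\rho$ is $\asymp\rho^{3/2}$, the same order as the inter-saddle gap — but the single-saddle, level-curve formulation avoids having to verify separately that $\{\Re A \geq \Re A(z^+)\}$ contains a closed loop through $z^+$ with the prescribed winding around $1$. You correctly flag the remaining global geometric verification as the essential work; in the paper that step is delegated to Figure~\ref{fig:contDefEdgeSmC}.
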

\begin{proof}
As before, we let $\tilde{\gamma}=\gamma^+-c^2N$ and drop the indices for $A$, $z^\pm$ and $\phi$ to simplify notation. The indices in this proof are $\frac x{cN}$. 

Suppose $0<c<1$ and $x<0$. Let $x=(c-2)cN-pcN^\delta,\ p>0$. It follows from \eqref{eq:defnA} that $A(z)$ has two distinct real critical points. Let $z^-$ be the smaller critical point. Similarly to \eqref{eq:asympOfZ+} we obtain
\begin{equation*}
z^-=\frac c{c-1}-\frac{c\sqrt{p}}{(1-c)^2}N^{\frac {\delta-1}2}+O(N^{\delta-1}).
\end{equation*}
If we deform the contours of integration of $K_{N,\gamma^+}(x,x)$ according to the saddle point method, one contour completely moves over the other. Thus, the residues we pick up total to $1$ and we have
\begin{equation*}
K_{N,\gamma^+}(x,x)-1=\frac{1}{(2\pi\mathfrak{i})^2}\oint_{C_u}\oint_{C_w}
\frac{e^{N(A(u)-A(z^-))}}{e^{N(A(w)-A(z^-))}}
\frac{e^{\tilde{\gamma}u^{-1}}}{e^{\tilde{\gamma}w^{-1}}}
\frac{1}{w}\frac{du\ dw}{u-w},
\end{equation*}
where the contours $C_u$ and $C_w$ are as in the left part of Figure \ref{fig:contDefEdgeSmC}.
\begin{figure}[t]
\includegraphics[width=13cm]{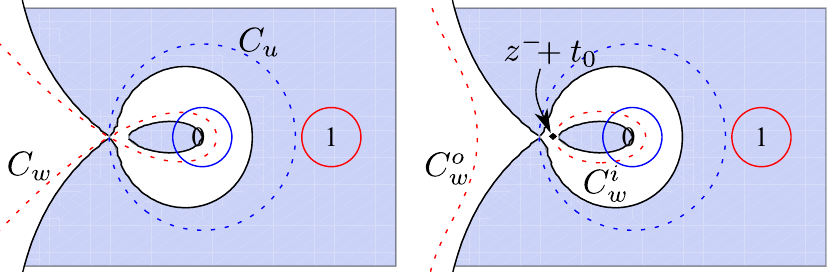}
\caption{\label{fig:contDefEdgeSmC} Deformation of contours near the left edge when $0<c<1$. $A(z)$ has two distinct real critical points. The shaded region corresponds to $\Re(A(z)-A(z^-))<0$. The solid red(right) and blue(left) contours are the original contours. The dotted red and blue contours are the deformed contours. The shaded region is bounded and the dotted red contour loops around it (not visible from the figures).}
\end{figure}
Without changing the integral, the contour $C_w$ can be further deformed into two closed contours $C_w^o$ and $C_w^i$ as in the right part of Figure \ref{fig:contDefEdgeSmC}. The outer contour $C_w^o$ can be moved so that there exists a constant $C_2>0$ such that $\Re(A(w)-A(z^-))>C_2$ for all $w$ along this contour. Since $u$ and $w$ are bounded away from $0$ and $1$, we obtain 
\begin{equation*}
\left|\frac{1}{(2\pi\mathfrak{i})^2}\oint_{C_u}\oint_{C_w^o}
\frac{e^{N(A(u)-A(z^-))}}{e^{N(A(w)-A(z^-))}}
\frac{e^{\tilde{\gamma}u^{-1}}}{e^{\tilde{\gamma}w^{-1}}}
\frac{1}{w}\frac{du\ dw}{u-w}\right|\leq C_1e^{-C_2N}e^{C_3|\tilde{\gamma}|},
\end{equation*}
for some constants $C_1,C_3>0$.

Since $z^-$ is a critical point of $A(z)$, it follows from Taylor's theorem that
\begin{equation*}
A(z^-+t)-A(z^-)=\frac 12 t^2 A''(z^-)+\frac 16 t^3 A'''(z^-)+O(t^4).
\end{equation*}
Similarly to \eqref{eq:asympOfA''} and \eqref{eq:asympOfA'''} we obtain
\begin{equation*}
A''(z^-)=-\frac{2(c-1)^3}{c}\sqrt{p}N^{\frac{\delta-1}2}+O(N^{\delta-1})>0
\end{equation*}
and
\begin{equation*}
A'''(z^-)=2\frac{(c-1)^5}{c^2}+\frac{6(c+2)(c-1)^4}{c^2}\sqrt{p}N^{\frac{\delta-1}2}+O(N^{\delta-1}),
\end{equation*}
which imply that there exist constants $D_1,D_2>0$, depending only on $c$ and $p$, such that for $t_0=D_1N^{\frac{\delta-1}2}$ we have 
\begin{equation*}
A(z^-+t_0)-A(z^-)=D_2t_0^2N^{\frac{\delta-1}2}+O(N^{\delta-1})
\end{equation*}
and 
\begin{equation*}
A(z^-+t)-A(z^-)>0\quad\text{ for all }t\in(0,t_0].
\end{equation*}
Thus, the inner contour $C_w^i$ can be chosen so that 
\begin{equation*}
\Re(A(w)-A(z^-))= D_2t_0^2N^{\frac{\delta-1}2}\quad\text{ for all }w\in C_w^i,
\end{equation*}
and $|u-w|\geq D_3 t_0$ for some constant $D_3$ and all $u\in C_u$, $w\in C_w^i$. Hence, there are constants $C_1,C_2,C'_2,C_3>0$ such that
\begin{multline*}
\left|\frac{1}{(2\pi\mathfrak{i})^2}\oint_{C_u}\oint_{C_w^i}
\frac{e^{N(A(u)-A(z^-))}}{e^{N(A(w)-A(z^-))}}
\frac{e^{\tilde{\gamma}u^{-1}}}{e^{\tilde{\gamma}w^{-1}}}
\frac{1}{w}\frac{du\ dw}{u-w}\right|
\\ \leq
\frac{C_1}{t_0}e^{-N\left(C_2t_0^2N^{\frac{\delta-1}2}\right)}e^{C_3|\tilde{\gamma}|}
\leq C_1e^{-C'_2N^{\frac{3\delta}2-\frac 12}}e^{C_3|\tilde{\gamma}|}.
\end{multline*}
This completes the proof of \eqref{eq:KEdgeSmC}.  The argument for \eqref{eq:KEdgeLgC} is similar.

$\qed$
\end{proof}

\begin{proposition}
\label{prop:PnNNearEdges}
Let $l(\lambda)$ denote the length of $\lambda$, i.e. the number of nonzero entries in $\lambda$, or equivalently the number of rows in its diagram. For any $\delta_0>\frac 13$ there exist constants $C_1,C_2>0$ such that for all $\delta\in[\delta_0,1)$, for all $N\in\mathbb{N}$ and for $n=\lfloor c^2N^2\rfloor$ we have
\begin{equation*}
\mathbb{P}_N^n(\{\lambda:l(\lambda)>(2-c)cN+N^{\delta}\}) \leq C_1 e^{-C_2 N^{\frac{3\delta}2-\frac 12}},\quad\text{ if }0<c<1,
\end{equation*}
\begin{equation*}
\mathbb{P}_N^n(\{\lambda:\lambda_N<N+(c-2)cN-N^{\delta}\}) \leq C_1 e^{-C_2 N^{\frac{3\delta}2-\frac 12}},\quad \text{ if }1<c,
\end{equation*}
and
\begin{equation*}
\mathbb{P}_N^n(\{\lambda:\lambda_1>(2+c)cN+N^{\delta}\}) \leq C_1 e^{-C_2 N^{\frac{3\delta}2-\frac 12}},\quad \text{ if }0<c.
\end{equation*}
\end{proposition}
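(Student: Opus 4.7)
The plan is to reformulate each of the three events as the existence of a ``misplaced'' point in the Maya configuration $\mathcal{P}(\lambda)$, bound the expected number of such points under the poissonized measure $\mathbb{P}_N^{\gamma^+,0}$ via Lemma \ref{lem:KExtremes}, depoissonize through Lemma \ref{lem:depoissonization}, and conclude by Markov's inequality. The three cases are handled in parallel.

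Since $\mathcal{P}(\lambda)=\{\lambda_i-i\}_{i=1}^N$ is a strictly decreasing sequence of $N$ integers in $[-N,\infty)$, a direct counting argument reformulates the three events as: (i) some $j\in[-N,\infty)$ with $j\leq-(2-c)cN-N^\delta$ is absent from $\mathcal{P}(\lambda)$; (ii) some such $j<(c-2)cN-N^\delta$ lies in $\mathcal{P}(\lambda)$; (iii) some $j>(c+2)cN+N^\delta-1$ lies in $\mathcal{P}(\lambda)$. Let $N_{\mathrm{bad}}(\lambda)$ count these misplaced points---``holes'' in (i), ``particles'' in (ii) and (iii). Linearity yields
$$\mathbb{E}_{\mathbb{P}_N^{\gamma^+,0}}[N_{\mathrm{bad}}]=\sum_{j\in I}\epsilon_j\bigl(K_{N,\gamma^+}(j,j)\bigr),$$
where $\epsilon_j(x)=1-x$ in case (i), $\epsilon_j(x)=x$ in (ii) and (iii), and $I$ is the relevant range. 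In each case, $I$ lies outside $\mathcal{I}_N^+(1,\delta)$ up to a $c$-dependent constant factor in the threshold that can be absorbed into $C_2$, and satisfies the sign hypothesis of Lemma \ref{lem:KExtremes}, so each summand is bounded in modulus by $C_1 e^{-C_2 N^{(3\delta-1)/2}}e^{C_3|\gamma^+-c^2N|}$. In cases (i) and (ii) the sum has at most $N$ terms; in (iii) the range is unbounded and one splits it at $j=\tilde C N$ for $\tilde C$ sufficiently large, invoking Lemma \ref{lem:KExtremes} on the finite piece and, on the tail $j>\tilde C N$, the geometric decay $|K_{N,\gamma^+}(j,j)|\leq Ce^{C'N}(1/2)^j$ which follows from the contour estimate in the proof of Lemma \ref{lem:depoisK1} specialized to $r=\tfrac13$.

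Setting $F_N(z):=\mathbb{E}_{\mathbb{P}_N^{z/N,0}}[N_{\mathrm{bad}}]$, the preceding estimates establish the hypotheses of the Depoissonization Lemma \ref{lem:depoissonization} with $f_\infty=0$: growth on $|z|=n=\lfloor c^2N^2\rfloor$ comes from Lemma \ref{lem:depoisK1} by the same summation, and the local estimate with $a_n=C N e^{-C_2 N^{(3\delta-1)/2}}$ comes from the previous paragraph after rewriting $|\gamma^+-c^2N|=|z-n|/N+O(1/N)=(c/\sqrt n)|z-n|+O(1/\sqrt n)$, which converts $e^{C_3|\gamma^+-c^2N|}$ into the form $e^{g_2|z-n|/\sqrt n}$ required by the lemma. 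Lemma \ref{lem:depoissonization} then produces $\mathbb{E}_{\mathbb{P}_N^n}[N_{\mathrm{bad}}]\leq Ca_n$, and Markov's inequality delivers $\mathbb{P}_N^n(N_{\mathrm{bad}}\geq 1)\leq Ca_n$; absorbing the factor $N$ into the exponential by a slight decrease of $C_2$ yields the three bounds of the proposition. The main technical obstacle is the tail in case (iii): Lemma \ref{lem:KExtremes} alone is not summable over the unbounded range, so splicing it with the direct geometric $j$-decay of the Borodin--Kuan kernel (obtained from a judicious choice of contour radius in \eqref{eq:intFormK}) is essential.
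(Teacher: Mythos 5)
Your proposal is essentially correct and uses the same key ingredients as the paper (the reformulation as holes/particles in $\mathcal{P}(\lambda)$, Lemma~\ref{lem:KExtremes}, depoissonization, and a union bound/Markov step), but you swap the order of the last two operations. The paper depoissonizes each diagonal correlation $\mathbb{E}_{\mathbb{P}_N^n}(c_x)$ \emph{separately} and only then sums; under the depoissonized measure $\mathbb{P}_N^n$ the configuration is bounded above by $n-1$, so for case (iii) the sum automatically has only $O(N^2)$ nonzero terms and the tail issue never arises. You instead sum under the poissonized measure first and depoissonize once. This forces you to confront the genuinely unbounded range in case (iii), and your fix is correct: for $x\gg N$ one can choose a contour radius so that $K_{N,\gamma^+}(x,x)$ decays geometrically --- the coefficient of $x/N$ in the exponent in the proof of Lemma~\ref{lem:depoisK1} is $\ln r-\ln(1-r)<0$, so with $r=\tfrac13$ one gets $|K_{N,\gamma^+}(x,x)|\lesssim e^{C_2N}(3/4)^x$ (the base $\tfrac12$ you wrote is a minor slip, but the qualitative geometric decay is right). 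Splicing this with Lemma~\ref{lem:KExtremes} and choosing $a_n\sim N^2e^{-C_2N^{(3\delta-1)/2}}$ before depoissonizing gives the claim. Both routes work; the paper's ordering quietly avoids the tail, yours must handle it but does so successfully. One small point you correctly flag (and the paper glosses over as well): for $c>1$ the ranges $(c-2)cN-N^\delta$ and the boundary of $\mathcal I^+_N(1,\delta)$ differ by a constant factor of $c$, so strictly one should invoke Lemma~\ref{lem:KExtremes} for $\mathcal I^+_N(K,\delta)$ with a different $K$ (or absorb the factor into the constants), which is harmless but worth saying.
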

\begin{proof}
Throughout the proof $C_1$ and $C_2$ denote arbitrary constants that depend only on $\delta_0$. Since $l(\lambda)>(2-c)cN+N^{\delta}$ implies that there exists $x\in[-N,(c-2)cN-N^{\delta}]$ such that $c_x(\lambda)=0$, we obtain
\begin{equation}
\label{eq:lengthToPoints}
\mathbb{P}_N^n(\{\lambda:l(\lambda)>(2-c)cN+N^{\delta}\}) \leq \sum_{x\in[-N,(c-2)cN-N^{\delta}]}\left(1-\mathbb{E}_{\mathbb{P}_N^n}(c_x)\right).
\end{equation}
When $0<c<1$ and ${x\in[-N,(c-2)cN-N^{\delta}]}$, by Lemma \ref{lem:KExtremes} we obtain
\begin{equation*}
|1-\mathbb{E}_{\mathbb{P}_N^{\gamma^+,0}}(c_x)|=|1-K_{N,\gamma^+}(x,x)|\leq C_1 e^{-C_2 N^{\frac{3\delta}2-\frac 12}}e^{C_3|\gamma^+-c^2N|}.
\end{equation*}
Depoissonizing by Lemma \ref{lem:depoissonization} we obtain
\begin{equation*}
|1-\mathbb{E}_{\mathbb{P}_N^n}(c_x)|\leq C_1 e^{-C_2 N^{\frac{3\delta}2-\frac 12}}.
\end{equation*}
This implies the first statement of the proposition since the index set in the sum in \eqref{eq:lengthToPoints} is of order $N$.

To prove the second statement, notice that
\begin{equation*}
\mathbb{P}_N^n(\{\lambda:\lambda_N<N+(c-2)cN-N^{\delta}\})\leq \sum_{x\in[-N,(c-2)cN-N^{\delta}]}\mathbb{E}_{\mathbb{P}_N^n}(c_x)
\end{equation*}
and proceed as above.

The last statement of the proposition can be proven in a similar way. 

$\qed$
\end{proof}
{\it Note:} The last statement of Proposition \ref{prop:PnNNearEdges} also follows immediately from Theorem 1.7 in \cite{JohDisc2001}, where it is proven that after appropriate scaling the local fluctuations of the longest row are characterized by the Tracy-Widom distribution.

Let $\mathcal{L}_\lambda(x)$ be the boundary of the rotated Young diagram $\lambda$ when it is scaled so that the cells have diagonal $2$. We have $\mathcal{L}_\lambda(x)=\sqrt{n}L_\lambda\left(\frac x{\sqrt{n}}\right)$. For $\delta>0$ and $K>0$ denote
\begin{equation*}
\mathbb{Y}^n_N(K,\delta)=
\left\{\lambda\in\mathbb{Y}^n_N:\supp |\mathcal{L}_\lambda(x)-|x||\subset \mathcal{I}^+_N(K,\delta) \right\}.
\end{equation*}
Figure \ref{fig:YnNKd} illustrates the restrictions put on the Young diagrams in the set $\mathbb{Y}^n_N(K,\delta)$.
\begin{figure}[t]
\includegraphics[width=14cm]{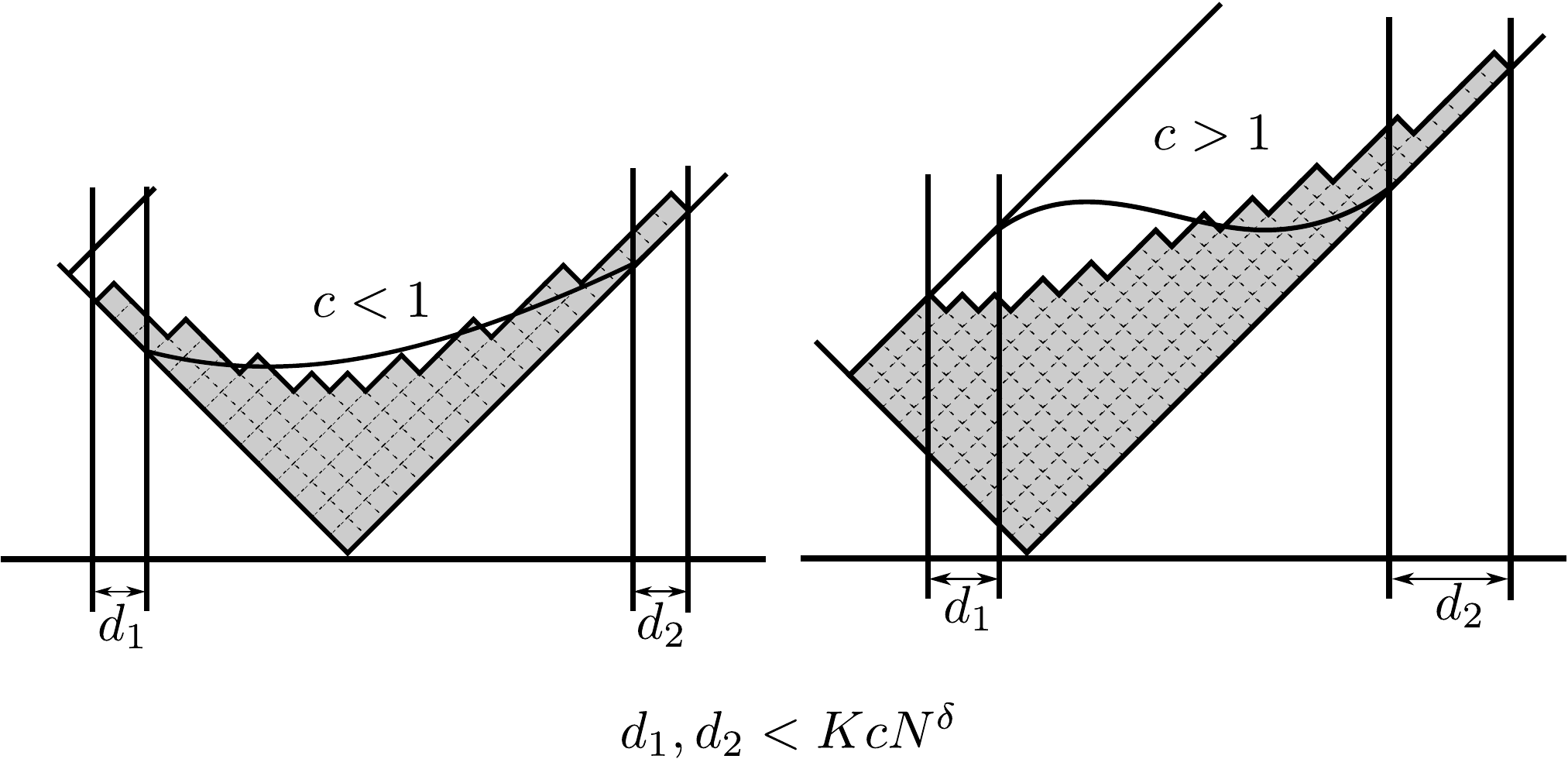}
\caption{\label{fig:YnNKd}Restrictions on the Young diagrams in the set $\mathbb{Y}^n_N(K,\delta)$. The curves represent the scaled limit shapes.}
\end{figure}
\begin{corollary}
\label{cor:PnNNearEdges}
For any $\delta_0>\frac 13$ there exist constants $C_1,C_2>0$ such that for all $\delta\in[\delta_0,1)$, for all $N\in\mathbb{N}$ and for $n=\lfloor c^2N^2\rfloor$ we have
\begin{equation*}
\mathbb{P}_N^n(\mathbb{Y}^n_N\backslash\mathbb{Y}^n_N(K,\delta)) \leq C_1 e^{-C_2 N^{\frac{3\delta}2-\frac 12}},\quad\text{ if }0<c<1,
\end{equation*}
\end{corollary}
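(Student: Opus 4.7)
The plan is to observe that for $0<c<1$, the complement of $\mathbb{Y}^n_N(K,\delta)$ lies in the union of two edge events, each already controlled by Proposition \ref{prop:PnNNearEdges}. Since in the regime $0<c<1$ the limit shape $\Omega_c(x)$ equals $|x|$ off the interval $[c-2,c+2]$, and since $\mathcal{L}_\lambda(x)=|x|$ off the horizontal extent $[-l(\lambda),\lambda_1]$ of the rotated diagram, the support of $\mathcal{L}_\lambda(x)-|x|$ in these unscaled coordinates is precisely the interval $[-l(\lambda),\lambda_1]$. Interpreting $\mathcal{I}^+_N(K,\delta)$ as the real interval
$$
\bigl[(c-2)cN-KcN^\delta,\ (c+2)cN+KcN^\delta\bigr],
$$
the failure of the containment $\supp|\mathcal{L}_\lambda-|\cdot||\subset\mathcal{I}^+_N(K,\delta)$ is equivalent to at least one of $l(\lambda)>(2-c)cN+KcN^\delta$ or $\lambda_1>(c+2)cN+KcN^\delta$ holding.

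I would then fix any $\delta'\in(\tfrac 13,\delta_0)$; since $\delta\geq\delta_0>\delta'$, we have $KcN^\delta\geq N^{\delta'}$ for all sufficiently large $N$, the finitely many exceptional $N$ being absorbed by enlarging $C_1$. Applying the first inequality of Proposition \ref{prop:PnNNearEdges} (which is valid precisely because $c<1$) with parameter $\delta'$ gives
$$
\mathbb{P}^n_N\bigl(l(\lambda)>(2-c)cN+KcN^\delta\bigr)\leq\mathbb{P}^n_N\bigl(l(\lambda)>(2-c)cN+N^{\delta'}\bigr)\leq C_1 e^{-C_2 N^{3\delta'/2-1/2}},
$$
and the third inequality of the same proposition (valid for all $c>0$) supplies an identical bound on $\mathbb{P}^n_N\bigl(\lambda_1>(c+2)cN+KcN^\delta\bigr)$. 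A union bound and a suitable relabeling of $\delta'$ back to $\delta$ (possible because $3\delta'/2-1/2>0$ for any $\delta'>\tfrac 13$, and we may choose $\delta'$ arbitrarily close to $\delta_0$) completes the argument, with final constants depending only on $\delta_0$, $c$, and $K$.

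The only nontrivial step is the geometric identification between the continuous-support condition defining $\mathbb{Y}^n_N(K,\delta)$ and the two endpoint inequalities on $l(\lambda)$ and $\lambda_1$. This hinges on the fact that $\Omega_c$ matches $|x|$ at \emph{both} tails exactly when $c<1$, which is why the corollary is restricted to this regime and why the analogous statement for $c>1$ would additionally require the second, middle-edge estimate of Proposition \ref{prop:PnNNearEdges} to control the flat linear piece of $\Omega_c$ on $[-1/c,c-2]$. Beyond this bookkeeping, no new analytic input is needed.
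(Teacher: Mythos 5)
Your geometric identification is correct and is indeed the heart of the argument: since for $0<c<1$ the support of $\mathcal{L}_\lambda(x)-|x|$ has endpoints $-l(\lambda)$ and $\lambda_1$, the failure of $\lambda\in\mathbb{Y}^n_N(K,\delta)$ is exactly the union of the two edge events, and the paper's one-line proof (``essentially a reformulation of Proposition \ref{prop:PnNNearEdges}'') is referring to precisely this reduction.

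However, there is a genuine gap in the step where you handle $Kc<1$ by introducing $\delta'\in(\tfrac13,\delta_0)$ and ``relabeling $\delta'$ back to $\delta$.'' After replacing $KcN^\delta$ by $N^{\delta'}$ and invoking Proposition \ref{prop:PnNNearEdges} with parameter $\delta'$, you obtain a bound of the form $C_1 e^{-C_2 N^{3\delta'/2-1/2}}$. But the corollary asserts $C_1 e^{-C_2 N^{3\delta/2-1/2}}$, and since $\delta'<\delta_0\leq\delta$ one has $N^{3\delta'/2-1/2}<N^{3\delta/2-1/2}$, so $e^{-C_2 N^{3\delta'/2-1/2}}>e^{-C_2 N^{3\delta/2-1/2}}$: your bound is \emph{strictly weaker} than the one claimed, and making $\delta'$ close to $\delta_0$ does not help because $\delta$ ranges over all of $[\delta_0,1)$ while the exponent you produce is fixed at $3\delta'/2-1/2$. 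The ``relabeling'' is not a valid move; you have proved a weaker statement.

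The correct fix does not go through a change of $\delta$ at all. Either observe that if $K\geq 1$ the event $\{l(\lambda)>(2-c)cN+KcN^\delta\}$ implies there exists $x\le(c-2)cN-cN^\delta$, i.e.\ $x\notin\mathcal{I}^+_N(1,\delta)$, with $c_x(\lambda)=0$, so Lemma \ref{lem:KExtremes} and the argument of Proposition \ref{prop:PnNNearEdges} apply verbatim with the \emph{same} $\delta$; or, for general $K>0$, note that the proof of Lemma \ref{lem:KExtremes} parameterizes $x=(c-2)cN-pcN^\delta$ and all its constants depend only on a lower bound for $p$, so the lemma holds uniformly for $x\notin\mathcal{I}^+_N(K,\delta)$ (i.e.\ $p\geq K$) with $C_1,C_2,C_3$ depending additionally on $K$. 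Proposition \ref{prop:PnNNearEdges} then extends to the threshold $KcN^\delta$, and the union bound yields the corollary with the stated exponent $\tfrac{3\delta}2-\tfrac12$. Apart from this one step, your argument follows the paper's intended route.
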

\begin{proof}
This is essentially a reformulation of Proposition \ref{prop:PnNNearEdges}.

$\qed$
\end{proof}

\section{Estimates of the correlation kernel}
\label{sec:boundForCorrKer}
We need to estimate the decay of correlations. For this purpose a different representation of the correlation kernel is useful. In this section we obtain this representation and use it to obtain various estimates for the correlation kernel.

Define the functions
\begin{equation*}
K_{x,N}^{+}(\gamma^+)=\frac 1{2\pi\mathfrak{i}}\oint e^{\gamma^+ u^{-1}}(1-u)^N u^{x}\ du
\end{equation*}
where integration is over any closed counter--clockwise contour winding once around $0$, and
\begin{equation*}
K_{y,N}^{-}(\gamma^+)=\frac 1{2\pi\mathfrak{i}}\oint e^{-\gamma^+ w^{-1}}(1-w)^{-N} w^{-y}\ dw
\end{equation*}
where integration is over any closed counter--clockwise contour winding once around $1$ and not containing $0$.
\begin{lemma}
\label{lem:kerKbyFuncK}
If $x\neq y$, then
\begin{equation}
\label{eq:KDiffForm}
K_{N,\gamma^+}(x,y)=\frac{N K_{x,N-1}^{+}(\gamma^+)K_{y+1,N+1}^{-}(\gamma^+)-\gamma^+ K_{x-1,N}^{+}(\gamma^+)K_{y+2,N}^{-}(\gamma^+)}{x-y}.
\end{equation}
\end{lemma}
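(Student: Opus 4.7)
The plan is to adapt Okounkov's approach \cite{OkNASA2001} and decouple the factor $\tfrac{1}{u-w}$ in the double contour integral \eqref{eq:intFormK} by means of a well-chosen integration by parts. Write $A(u)=e^{\gamma^+/u}(1-u)^N u^x$, $B(w)=e^{-\gamma^+/w}(1-w)^{-N}w^{-y-1}$, and $F(u,w) = A(u)B(w)/(u-w)$, so that $K_{N,\gamma^+}(x,y) = (2\pi\mathfrak{i})^{-2}\oint\oint F\,du\,dw$ with contours $|u|=r$ and $|w-1|=r$.

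The core computation is the expansion of the total derivative $\partial_u(uF)+\partial_w(wF)$. Using the logarithmic derivatives $A'(u)/A(u) = x/u-N/(1-u)-\gamma^+/u^2$ and $B'(w)/B(w) = -(y+1)/w + N/(1-w) + \gamma^+/w^2$, together with $(w-u)/(u-w) = -1$, a direct application of the product rule yields
\begin{equation*}
\partial_u(uF)+\partial_w(wF) = F\Bigl[(x-y) - \tfrac{Nu}{1-u}+\tfrac{Nw}{1-w}-\tfrac{\gamma^+}{u}+\tfrac{\gamma^+}{w}\Bigr],
\end{equation*}
where the constant $x-y$ arises from collecting $1+x+1-(y+1)-1=x-y$. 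The crucial algebraic fact is that each correction term carries a factor of $u-w$: namely $\tfrac{Nu}{1-u}-\tfrac{Nw}{1-w} = \tfrac{N(u-w)}{(1-u)(1-w)}$ and $\tfrac{\gamma^+}{u}-\tfrac{\gamma^+}{w}=-\tfrac{\gamma^+(u-w)}{uw}$, so when multiplied by $F$ the $u-w$ in the denominator cancels.

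Integrating the identity over the closed contours annihilates the left-hand side and yields
\begin{equation*}
(x-y)\oint\oint F\,du\,dw = \oint\oint A(u)B(w)\Bigl[\tfrac{N}{(1-u)(1-w)}-\tfrac{\gamma^+}{uw}\Bigr]du\,dw.
\end{equation*}
The right-hand integrand factors as a function of $u$ times a function of $w$, so the double integral splits into products of single contour integrals, which are immediately recognizable as $(2\pi\mathfrak{i})^2 N K^+_{x,N-1}(\gamma^+)K^-_{y+1,N+1}(\gamma^+)$ and $(2\pi\mathfrak{i})^2 \gamma^+ K^+_{x-1,N}(\gamma^+)K^-_{y+2,N}(\gamma^+)$. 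Dividing through by $(2\pi\mathfrak{i})^2(x-y)$, which is legitimate since $x\neq y$, gives \eqref{eq:KDiffForm}. The argument is a direct algebraic computation; the only step requiring real attention is verifying the cancellation of the $u-w$ factor in the correction terms, a property that is forced by the precise form of the Schur--Weyl weights $A$ and $B$.
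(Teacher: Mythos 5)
Your proof is correct and takes essentially the same approach as the paper: both execute Okounkov's integration-by-parts trick, using the invariance of the eigenvalues of $u\partial_u + w\partial_w$ and the cancellation of the $u-w$ factor in $\frac{N}{(1-u)(1-w)} - \frac{\gamma^+}{uw}$ to split the double integral into products. The paper phrases this via an abstract adjoint identity for $u\partial_u + w\partial_w$ and the relation $u^x w^{-y-1} = (u\partial_u + w\partial_w + 1)\frac{u^x w^{-y-1}}{x-y}$, whereas you compute the total derivative $\partial_u(uF) + \partial_w(wF)$ directly; these are the same calculation.
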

\begin{proof}
The main idea of the proof is to integrate formula \eqref{eq:intFormK} by parts (the idea was used by A. Okounkov to obtain a similar formula for the Bessel kernel \cite{OkNASA2001}).

In general, for functions $f(u,w)$ and $g(u,w)$ which are differentiable on simple closed contours $C_u$ and $C_w$ integration by parts gives
\begin{align*}
\oint_{C_u}\oint_{C_w}&f\left(u\frac\partial{\partial u}+w\frac\partial{\partial w}\right)g\ dwdu 
\\&= \oint_{C_w}\oint_{C_u}f u\frac\partial{\partial u}g\ dudw
+ \oint_{C_u}\oint_{C_w}fw\frac\partial{\partial w} g\ dwdu
\\&= - \oint_{C_w}\oint_{C_u}g\left(u\frac\partial{\partial u}+1\right)f\ dudw
- \oint_{C_u}\oint_{C_w}g\left(w\frac\partial{\partial w}+1\right) f\ dwdu
\\&= - \oint_{C_u}\oint_{C_w}g\left(u\frac\partial{\partial u}+w\frac\partial{\partial w}+2\right) f\ dwdu.
\end{align*}

Since
\begin{equation*}
u^xw^{-y-1}=\left(u\frac \partial{\partial u} + w\frac \partial{\partial w}+1\right)\frac{u^x w^{-y-1}}{x-y},
\end{equation*}
applying the integration by parts calculation above to \eqref{eq:intFormK} we obtain
% \begin{equation*}
% K_{N,\gamma^+}(x,y)=-\frac{1}{(2\pi\mathfrak{i})^2}\oint_{C_u}\oint_{C_w}\frac{u^x w^{-y-1}}{x-y}
% \left(u\frac \partial{\partial u} + w\frac \partial{\partial w}+1\right)
% \frac{e^{\gamma^+(u^{-1}-w^{-1})}\frac{(1-u)^N}{(1-w)^{N}}}{u-w}\ du\ dw.
% \end{equation*}
\begin{multline*}
K_{N,\gamma^+}(x,y)=-\frac{1}{(2\pi\mathfrak{i})^2}\oint_{C_u}\oint_{C_w}\frac{u^x w^{-y-1}}{x-y}
\\\times\left(u\frac \partial{\partial u} + w\frac \partial{\partial w}+1\right)
\frac{e^{\gamma^+(u^{-1}-w^{-1})}\frac{(1-u)^N}{(1-w)^{N}}}{u-w}\ du\ dw.
\end{multline*}
It follows from
% \begin{equation*}
% \left(u\frac \partial{\partial u} + w\frac \partial{\partial w}+1\right)\frac{e^{\gamma^+(u^{-1}-w^{-1})}\frac{(1-u)^N}{(1-w)^{N}}}{u-w}
% =e^{\gamma^+(u^{-1}-w^{-1})}\frac{(1-u)^N}{(1-w)^{-N}}\left(\frac{\gamma^+}{uw}-\frac{N}{(1-u)(1-w)}\right)
% \end{equation*}
\begin{multline*}
\left(u\frac \partial{\partial u} + w\frac \partial{\partial w}+1\right)\frac{e^{\gamma^+(u^{-1}-w^{-1})}\frac{(1-u)^N}{(1-w)^{N}}}{u-w}
\\=e^{\gamma^+(u^{-1}-w^{-1})}\frac{(1-u)^N}{(1-w)^{-N}}\left(\frac{\gamma^+}{uw}-\frac{N}{(1-u)(1-w)}\right)
\end{multline*}
that the integrals with respect to $u$ and $w$ can be separated. Carrying this out we obtain \eqref{eq:KDiffForm}.
$\qed$
\end{proof}

\subsection{Estimates of \texorpdfstring{$K_{x,N}^{\pm}(\gamma^+)$}{K\_\{x,N,+,-\}} for various values of \texorpdfstring{$x$}{x}}

\begin{lemma}
\label{lem:estFuncK+}
%Suppose $c\neq 1$.
For any $\delta_0>\frac 13$ there exist constants $C_1=C_1(\delta_0)>0$ and $C_2=C_2(\delta_0)>0$ such that 
\begin{equation*}
\left|K_{x,N}^{\pm}(\gamma^+)\right|\leq C_1 e^{\pm N \Re A_{\frac x{cN}}\left(z_{\frac x{cN}}^+\right)} \frac{e^{C_2|\gamma^+-c^2N|}}{N^{\frac {\delta+1}4}}
\end{equation*}
for all $\delta\in[\delta_0,1)$, all $x\in\mathcal{I}^-_N(1,\delta)$, all $\gamma^+$ and all $N\in\mathbb{N}$.
\end{lemma}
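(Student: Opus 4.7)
I plan to analyze the single-contour integrals defining $K^{\pm}_{x,N}(\gamma^+)$ by a saddle-point argument parallel to the one used for $K_{N,\gamma^+}$ in Lemma \ref{lem:depoisK2}, but considerably simpler since there is only one contour and no $(u-w)^{-1}$ factor to contend with. The two cases are symmetric (changing the sign of $A$), so I would describe $K^+$ in detail and obtain $K^-$ by the analogous deformation from the cycle around $1$.

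Set $x'=x/(cN)$ and $\tilde\gamma=\gamma^+-c^2N$. The key observation is that
$e^{\gamma^+u^{-1}}(1-u)^Nu^x=e^{NA_{x'}(u)}\,e^{\tilde\gamma u^{-1}}$,
with $A_{x'}$ given by \eqref{eq:defnA}, so the prefactor $e^{N\Re A_{x'}(z^+_{x'})}$ in the statement exactly matches the modulus of $e^{NA_{x'}(z^+_{x'})}$ at the saddle $z^+=z^+_{x'}$. First I would deform the original contour, which winds once around $0$, to a steepest-descent contour $C$ that passes through both critical points $z^\pm_{x'}$ and on which $\Re(A_{x'}(u)-A_{x'}(z^+_{x'}))\le 0$, with equality only at $z^\pm_{x'}$; this is exactly the left (blue) dotted contour in Figure \ref{fig:contDefBulk}. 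No residue is picked up during the deformation, since the integrand has no poles away from $0$, which stays enclosed. Along $C$ the auxiliary factor satisfies $|e^{\tilde\gamma u^{-1}}|\le e^{C_2|\tilde\gamma|}$ because $|u^{-1}|$ is bounded.

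Next I would split $C$ into two short pieces $C_\pm=\{z^\pm_{x'}+e^{\pm i\xi}t:|t|<N^{-\beta}\}$ for some $\beta$ with $\tfrac13<\beta<\tfrac{\delta_0+1}{4}$, and a long remainder $C'$, just as in \eqref{eq:contourPieces}. Writing $|x'-c|=2-pN^{\delta-1}$ with $p\ge 1$ (which is the meaning of $x\in\mathcal{I}^-_N(1,\delta)$), the asymptotic \eqref{eq:asympOfA''} gives $|A''_{x'}(z^+_{x'})|\sim C\sqrt p\,N^{(\delta-1)/2}$, so the steepest-descent decay along $C$ is of order $e^{-D\sqrt p\,N^{(\delta-1)/2}|t|^2}$. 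Hence on $C'$ the integrand is bounded by $e^{N\Re A_{x'}(z^+_{x'})}\cdot e^{-D\sqrt p\,N^{(\delta+1)/2-2\beta}}e^{C_2|\tilde\gamma|}$, which is negligible compared with $N^{-(\delta+1)/4}$ since $\beta<(\delta+1)/4$. On $C_\pm$, Taylor expansion at $z^\pm_{x'}$ combined with the change of variable $t'=p^{1/4}N^{(\delta+1)/4}\,t$ reduces the integral to a truncated Gaussian; the cubic remainder contributes a factor $1+o(1)$ because $Nt^3=o(1)$ for $|t|<N^{-\beta}$ and $\beta>\tfrac13$, exactly as in \eqref{eq:estCubicTerm}. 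The resulting Gaussian integral contributes the crucial factor $N^{-(\delta+1)/4}$.

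The main obstacle will be uniformity in $x$ as $|x'-c|\to 2$, where the two saddles collide and the quadratic coefficient of $A_{x'}$ degenerates at rate $N^{(\delta-1)/2}$. The narrow-neck parameter $p\ge 1$ on $\mathcal{I}^-_N(1,\delta)$ and the choice $\tfrac13<\beta<\tfrac{\delta_0+1}{4}$ together guarantee that $C_\pm$ stay inside the quadratic regime of the expansion while still localizing the mass of the integral near the saddles; this is exactly the mechanism that produces the exponent $\tfrac{\delta+1}{4}$ in the bound. For $K^-_{y,N}(\gamma^+)$ the analogous deformation from the cycle around $1$ (as the right contour in Figure \ref{fig:contDefBulk}) applies, and because the integrand is $e^{-NA_{y'}(w)-\tilde\gamma w^{-1}}$, the bound is governed by the minimum of $\Re A_{y'}$ on the deformed contour, attained at $z^+_{y'}$, which accounts for the sign flip in the exponent $\pm N\Re A_{x'}(z^+_{x'})$.
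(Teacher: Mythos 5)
Your proposal is correct and follows essentially the same saddle-point route as the paper: deform the single contour through $z^\pm_{x'}$, split off short linear arcs $C_\pm$ of half-length $N^{-\beta}$ with $\tfrac13<\beta<\tfrac{\delta_0+1}{4}$, show the remainder is exponentially negligible, bound $|e^{\tilde\gamma u^{-1}}|\le e^{C_2|\tilde\gamma|}$, and apply the Gaussian rescaling to extract the $N^{-(\delta+1)/4}$ factor. The only cosmetic difference is that you absorb $p^{1/4}$ into the change of variable (mirroring \eqref{eq:changeVarTS}), while the paper simply uses $t'=N^{(\delta+1)/4}t$ and drops the resulting $p^{-1/4}\le 1$; both yield the stated bound.
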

%{\it Note:} The condition $c\neq 1$ is only needed near the left edge. If $-(2-\varepsilon)\leq\frac x{cN}-c\leq 2-N^{\delta-1}$, for $\varepsilon>0$ independent of $N$, then the result holds for $c=1$ as well.

\begin{proof}
We present the proof of the result for $K_{x,N}^{+}(\gamma^+)$. The proof for $K_{x,N}^{-}(\gamma^+)$ is completely identical.

Throughout the proof $C_1$ and $C_2$ will denote arbitrary constants that depend only on $\delta_0$. We will use the same notation as in the proof of Lemma \ref{lem:depoisK2}. In particular $\tilde{\gamma}=\gamma^+-c^2N$, $A(u)=A_{\frac x{cN}}(u)$, $z^\pm=z_{\frac x{cN}}^\pm$, the contour of integration is deformed so that it goes through $z^\pm$ and has the property that for all $u$ on the deformed contour $\Re(A(u)-A(z^+))\leq 0$ and the deformed contour $C_u$ is divided into three parts as in \eqref{eq:contourPieces}. Consider
\begin{equation*}
K_{x,N}^{+}(\gamma^+) e^{-N A(z^+)}=\frac 1{2\pi\mathfrak{i}}\oint_{C_u}e^{N(A(u)-A(z^+))}e^{\tilde{\gamma}u^{-1}}\ du.
\end{equation*}
Let $\frac x{cN}-c=\pm(2-pN^{\delta-1})$ for some $p>0$. Arguments similar to those in the proof of Lemma \ref{lem:depoisK2} show that the contribution of the large contour $C'_u$ is exponentially small. Let $\beta$ be as in Lemma \ref{lem:depoisK2}. On the contour $C_{u,+}$ we have
\begin{align*}
\left|\frac 1{2\pi\mathfrak{i}}\oint_{C_{u,+}}e^{N(A(u)-A(z^+))}e^{\tilde{\gamma}u^{-1}} \ du\right|
&\leq C_1 e^{C_2|\tilde{\gamma}|}\int_{-N^{-\beta}}^{N^{-\beta}}e^{N\Re(A(z^++e^{\mathfrak{i}\xi}t)-A(z^+))}\ dt
\\&\leq C_1 e^{C_2|\tilde{\gamma}|}\int_{-N^{-\beta}}^{N^{-\beta}}e^{-D\sqrt{p}N^{\frac{\delta+1}2}t^2}\ dt
%=C_1 e^{C_2|\tilde{\gamma}|}\int_{-N^{-\beta}}^{N^{-\beta}}e^{-D Nt^2}O(1)dt
\end{align*}
for some positive constant $D$.
Making the change of variable $t'=N^{\frac {\delta+1}4}t$ we obtain
\begin{equation*}
\left|\frac 1{2\pi\mathfrak{i}}\oint_{C_{u,+}}e^{N(A(u)-A(z^+))}e^{\tilde{\gamma}u^{-1}} \ du\right|
\leq C_1 \frac{e^{C_2|\tilde{\gamma}|}}{N^{\frac {\delta+1}4}}\int_{-N^{\frac {\delta+1}4-\beta}}^{N^{\frac {\delta+1}4-\beta}}e^{-D\sqrt{p}t'^2}\ dt'\leq C_1 \frac{e^{C_2|\tilde{\gamma}|}}{N^{\frac {\delta+1}4}}.
\end{equation*}
Of course, the contribution from $C_{u,-}$ is of the same order.

$\qed$
\end{proof}

\begin{lemma}
\label{lem:estDiffK+}
%Suppose $c\neq 1$.
For any $\delta_0>\frac 13$ there exist constants $C_1=C_1(\delta_0)>0$ and $C_2=C_2(\delta_0)>0$ such that 
\begin{equation*}
\left|K_{x+1,N}^{\pm}(\gamma^+)-\sign(x) cK_{x,N+1}^{\pm}(\gamma^+)\right|\leq C_1 e^{\pm N \Re A_{\frac x{cN}}\left(z_{\frac x{cN}}^+\right)} \frac{e^{C_2|\gamma^+-c^2N|}}{N^{\frac {3-\delta}4}}
\end{equation*}
for all $\delta\in[\delta_0,1)$, all $x\in\mathcal{I}^-_N(1,\delta)$, all $\gamma^+$ and all $N\in\mathbb{N}$.
\end{lemma}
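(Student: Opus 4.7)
The plan is to combine the two $K$'s into a single contour integral whose integrand carries an extra linear factor vanishing at the limiting location of the saddle point, and then re-run the saddle-point analysis of Lemma \ref{lem:estFuncK+} with this factor tracked explicitly.

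\textbf{Step 1: Unified integral representation.} Directly from the definitions,
\begin{equation*}
K^{+}_{x+1,N}(\gamma^+) - \sign(x)\,c\,K^{+}_{x,N+1}(\gamma^+) = \frac{1}{2\pi\mathfrak{i}}\oint e^{\gamma^+ u^{-1}}(1-u)^N u^{x}\,F(u)\,du,
\end{equation*}
where $F(u) = \bigl(1+\sign(x)\,c\bigr)u - \sign(x)\,c$. A parallel manipulation (pulling out one factor of $w$ and one of $(1-w)$) expresses $K^{-}_{x+1,N} - \sign(x)\,c\,K^{-}_{x,N+1}$ as a single integral with an analogous linear factor $\tilde F(w)$ in $w$. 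So from this point on it suffices to analyze the $K^+$ version.

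\textbf{Step 2: Vanishing at the limiting saddle.} The linear factor $F$ has its unique root at $u = \frac{\sign(x)\,c}{1+\sign(x)\,c}$, which agrees with the limiting location of $z^+_{x/(cN)}$ as $\bigl|\frac{x}{cN}-c\bigr|\to 2$ with the matching sign in the expansion \eqref{eq:asympOfZ+}. Consequently, for $x\in\mathcal{I}^-_N(1,\delta)$, writing $\bigl|\frac{x}{cN}-c\bigr|=2-pN^{\delta-1}$ with $p\ge 1$ and using \eqref{eq:asympOfZ+} gives
\begin{equation*}
F\bigl(z^+_{x/(cN)}\bigr) = O\bigl(N^{(\delta-1)/2}\bigr).
\end{equation*}
This is the key point: the extra linear factor is small, of the order of the deviation of $z^+$ from its limiting position, precisely on the scale picked out by the saddle-point regime.

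\textbf{Step 3: Saddle-point estimate with the extra factor.} Deform the contour through $z^\pm$ and split it into the tail piece $C'_u$ and the two bumps $C_{u,\pm}$ of radius $N^{-\beta}$, with $\beta$ slightly larger than $\frac{\delta+1}{4}$, exactly as in the proof of Lemma \ref{lem:estFuncK+}. On $C'_u$ the contribution is exponentially small in $N$, since $F$ is uniformly bounded on a bounded contour and $\Re(A(u)-A(z^+))\le -D_3$ there, as already shown. On each bump expand
\begin{equation*}
F(u) = F(z^\pm) + \bigl(1+\sign(x)\,c\bigr)(u-z^\pm).
\end{equation*}
The constant-in-$u$ piece produces $F(z^\pm)$ times the Gaussian integral already estimated in Lemma \ref{lem:estFuncK+}, yielding a contribution of size
\begin{equation*}
\bigl|F(z^\pm)\bigr|\cdot \frac{1}{N^{(\delta+1)/4}}\,e^{N\Re A(z^+)}\,e^{C_2|\gamma^+-c^2N|} = O\!\left(\frac{e^{N\Re A(z^+)}\,e^{C_2|\gamma^+-c^2N|}}{N^{(3-\delta)/4}}\right).
\end{equation*}
The linear-in-$u$ piece gives, after the substitution $t'=p^{1/4}N^{(\delta+1)/4}t$ used in Lemma \ref{lem:estFuncK+}, a contribution of size $O(N^{-(\delta+1)/2})$, which for $\delta>\frac13$ satisfies $N^{-(\delta+1)/2}=o\bigl(N^{-(3-\delta)/4}\bigr)$ and is therefore subdominant. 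Assembling the two bumps and the tail gives the claimed bound.

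\textbf{Main obstacle.} The only delicate point is the sign bookkeeping in Step 2: one must verify that $\sign(x)$ in the statement correctly selects, for each $x\in\mathcal{I}^-_N(1,\delta)$, the branch of the asymptotic \eqref{eq:asympOfZ+} whose limiting location is exactly the zero of $F$. Once this matching is confirmed, the rest is a direct extension of the saddle-point machinery of Lemma \ref{lem:estFuncK+}, with the new factor $F$ contributing precisely the $N^{(\delta-1)/2}$ improvement that accounts for the difference between the exponents $(\delta+1)/4$ and $(3-\delta)/4$ in the denominators.
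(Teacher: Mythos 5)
Your approach coincides with the paper's: combine the two $K$'s into a single contour integral against a linear factor $F(u) = \bigl(1+\sign(x)\,c\bigr)u - \sign(x)\,c$, observe that $F$ vanishes at the limiting location of the saddle so that $F(z^+) = O(N^{(\delta-1)/2})$ on the relevant scale, and then rerun the Gaussian saddle analysis of Lemma~\ref{lem:estFuncK+} carrying the extra factor. Steps~1 and~3, including the exponent arithmetic showing the $F(z^+)$ piece dominates the $(u-z^+)$ piece for $\delta>\tfrac13$, are correct and match the paper's computation.

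The problem is exactly the point you flag as the ``main obstacle'' and then do not resolve. The $\pm$ in the expansion \eqref{eq:asympOfZ+} is $\sign\!\left(\tfrac{x}{cN}-c\right) = \sign(x-c^2N)$, not $\sign(x)$. These agree when $x<0$ and when $x>c^2N$, but they disagree on $0<x<c^2N$. The paper's proof treats only the two subcases ``$\tfrac{x}{cN}-c>0$'' (where $z^+ \to \tfrac{c}{c+1}$ and the root of $(1+c)u-c$ matches) and ``$x<0$'' (where $z^+ \to \tfrac{c}{c-1}$ and the root of $(1-c)u+c$ matches), leaving the band $0<x\leq c^2N$ unaddressed. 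This is not merely cosmetic: for $c>2$ the left edge of the bulk sits at $x=c(c-2)N>0$, so there $\sign(x)=+1$ selects $F$ with root $\tfrac{c}{c+1}$ while in fact $z^+\to\tfrac{c}{c-1}$; then $F(z^+)\to\tfrac{2c}{c-1}\neq 0$, the $O(N^{(\delta-1)/2})$ cancellation disappears, and the saddle analysis only yields the weaker $N^{-(\delta+1)/4}$ of Lemma~\ref{lem:estFuncK+}. So the verification you postponed in Step~2 in fact fails for $c>2$. The fix is to show (or to restate the lemma with) $\sign(x-c^2N)$ in place of $\sign(x)$; with that sign the root of $F$ always agrees with the limiting saddle and the argument closes. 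You should note that the paper's own write-up shares this gap rather than silently assume the sign matching; having correctly identified it as the delicate step, you should either verify it, discover the counterexample at $c>2$, or propose the corrected sign.
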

%{\it Note:} The condition $c\neq 1$ is only needed near the left edge. 

\begin{proof}
We present the proof of the result for $K_{x,N}^{+}(\gamma^+)$. The proof for $K_{x,N}^{-}(\gamma^+)$ is completely identical.

In this proof the indices of $A(u)$ and $z^+$ are $\frac x{cN}$. The proof is similar to the proof of Lemma \ref{lem:estFuncK+}. Suppose $\frac x{cN}-c=(2-pN^{\delta-1})>0$. 
We have 
% \begin{equation*}
% \left(K_{x+1,N}^{+}(\gamma^+)-cK_{x,N+1}^{+}(\gamma^+)\right)e^{-N A(z^+)}
% =\frac 1{2\pi\mathfrak{i}}\oint_{C}e^{N(A(u)-A(z^+))}e^{\tilde{\gamma}u^{-1}}(u-c(1-u)) \ du.
% \end{equation*}
\begin{multline*}
\left(K_{x+1,N}^{+}(\gamma^+)-cK_{x,N+1}^{+}(\gamma^+)\right)e^{-N A(z^+)}
\\=\frac 1{2\pi\mathfrak{i}}\oint_{C}e^{N(A(u)-A(z^+))}e^{\tilde{\gamma}u^{-1}}(u-c(1-u)) \ du.
\end{multline*}
The main contribution comes from the sections of the contour near $z^{\pm}$. If $u=z^++e^{\mathfrak{i}\xi}t$, then from \eqref{eq:asympOfZ+} we obtain 
\begin{equation*}
|u-c(1-u)|=(c+1)\left|z^++e^{\mathfrak{i}\xi}t-\frac c{c+1}\right|\leq D_1N^{\frac{\delta-1}2}+D_2 t
\end{equation*}
for some positive constants $D_1$, $D_2$. Proceeding as in Lemma \ref{lem:estFuncK+}, we obtain
% \begin{equation*}
% \left|K_{x+1,N}^{+}(\gamma^+)-cK_{x,N+1}^{+}(\gamma^+)\right|
% \leq C_1 e^{N \Re A_{\frac x{cN}}\left(z_{\frac x{cN}}^+\right)} \frac{e^{C_2|\gamma^+-c^2N|}}{N^{\frac {\delta+1}4}}\left(D_1N^{\frac{\delta-1}2}+\frac{D_2}{N^{\frac{\delta+1}4}}\right),
% \end{equation*}
\begin{multline*}
\left|K_{x+1,N}^{+}(\gamma^+)-cK_{x,N+1}^{+}(\gamma^+)\right|
\\\leq C_1 e^{N \Re A_{\frac x{cN}}\left(z_{\frac x{cN}}^+\right)} \frac{e^{C_2|\gamma^+-c^2N|}}{N^{\frac {\delta+1}4}}\left(D_1N^{\frac{\delta-1}2}+\frac{D_2}{N^{\frac{\delta+1}4}}\right),
\end{multline*}
which completes the proof when $x>0$.

When $x<0$, instead of $u-c(1-u)$ we have
\begin{equation*}
|u+c(1-u)|=|c-1|\left|z^++e^{\mathfrak{i}\xi}t-\frac c{c-1}\right|,
\end{equation*}
and the rest follows as above, since in this case it follows from \eqref{eq:asympOfZ+} that the leading term of $z^+$ is $\frac c{c-1}$.

$\qed$
\end{proof}

\begin{lemma}
\label{lem:estFuncK+OnEdge}
%Suppose $c\neq 1$.
There exist constants $C_1>0$ and $C_2>0$ such that
\begin{equation*}
\left|K_{x,N}^{\pm}(\gamma^+)\right|\leq C_1 e^{\pm N \Re A_{\frac x{cN}}\left(z_{\frac x{cN}}^+\right)} \frac{e^{C_2|\gamma^+-c^2N|}}{N^{\frac 13}}
\end{equation*}
for all $x$, all $\gamma^+$ and all $N\in\mathbb{N}$.
\end{lemma}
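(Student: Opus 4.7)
The plan is to split into cases according to how close $\frac{x}{cN} - c$ is to the degenerate values $\pm 2$ of the saddle. If $x \in \mathcal{I}^-_N(1,1/2)$, then Lemma \ref{lem:estFuncK+} applied with $\delta_0 = 1/2$ directly yields a bound with denominator $N^{(1/2+1)/4} = N^{3/8}$, which is strictly stronger than $N^{1/3}$. If $|\frac{x}{cN} - c| > 2 + \epsilon_0$ for a fixed $\epsilon_0 > 0$, then the two critical points of $A_{x/(cN)}$ are real and well-separated; deforming the contour through the relevant saddle and applying the standard steepest-descent estimate gives a prefactor of order $N^{-1/2}$ (and exponential decay for the part of the contour away from the saddle), again better than $N^{-1/3}$. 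Only the edge transition regime $\bigl||\frac{x}{cN} - c| - 2\bigr| \leq N^{-1/2}$ requires a new argument.

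\textbf{Airy rescaling at the edge.} In the transition regime, the critical points are within distance $O(N^{-1/4})$ of each other (or coalesce), and $|A''(z^+)| = O(N^{-1/3})$ by the asymptotic \eqref{eq:asympOfA''}, while $A'''(z^+)$ remains bounded away from zero by \eqref{eq:asympOfA'''}. Choose a unit direction $e^{\mathfrak{i}\xi}$ such that $\Re(e^{3\mathfrak{i}\xi} A'''(z^+)) < 0$ (possible uniformly, since $A'''(z^+)$ tends to a nonzero real limit at the edge), and deform $C_u$ so that it passes through $z^+$ tangentially to this direction. Rescale $u - z^+ = N^{-1/3} e^{\mathfrak{i}\xi} \tau$. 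Then the quadratic Taylor term becomes $\tfrac{1}{2} N^{1/3} A''(z^+) e^{2\mathfrak{i}\xi} \tau^2 = O(|\tau|^2)$, the cubic term becomes $\tfrac{1}{6} A'''(z^+) e^{3\mathfrak{i}\xi} \tau^3$ with strictly negative real part, and the quartic remainder is $O(N^{-1/3}|\tau|^4)$, which can be absorbed into the cubic on the relevant Airy range. The Jacobian $du = N^{-1/3} e^{\mathfrak{i}\xi} d\tau$ supplies the factor $N^{-1/3}$, the factor $e^{\tilde{\gamma} u^{-1}}$ (with $\tilde{\gamma} = \gamma^+ - c^2 N$) contributes $e^{C_2|\tilde{\gamma}|}$ since $|u|$ stays bounded away from $0$, and the resulting $\tau$-integral is uniformly bounded by an Airy-type integral.

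\textbf{Tail contributions and main obstacle.} The contribution from the portion of the contour outside a neighborhood of $z^+$ of size $N^{-1/3+\varepsilon}$ is exponentially suppressed, by an argument parallel to the bound on $K_{C'_u, C_w}$ in the proof of Lemma \ref{lem:depoisK2}. The main obstacle is the edge transition: neither the quadratic nor the cubic term of the Taylor expansion dominates, so one must retain both while choosing the contour direction so that the cubic provides decay and the quadratic can be tolerated. The crucial observation making the Airy rescaling work uniformly is that $|A''(z^+)| = O(N^{-1/3})$ throughout the transition window, which is exactly the size that leaves the rescaled quadratic term bounded after the Airy change of variable.
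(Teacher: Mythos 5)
Your reduction to the transition regime is natural, and the two non-transitional cases are handled correctly (Lemma \ref{lem:estFuncK+} with $\delta_0=\tfrac12$ in the bulk, real well-separated saddles far outside). But the Airy-rescaling step contains two genuine errors that leave a gap in exactly the range the lemma is supposed to cover.

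First, the key claim ``$|A''(z^+)|=O(N^{-1/3})$ throughout the transition window'' is false. Write $2-\bigl|\tfrac{x}{cN}-c\bigr|=pN^{\delta-1}$; then by \eqref{eq:asympOfA''}, $|A''(z^+)|\sim\sqrt{p}\,N^{(\delta-1)/2}$. Your transition window $\bigl||\tfrac{x}{cN}-c|-2\bigr|\le N^{-1/2}$ corresponds to $\delta\le\tfrac12$, and for $\delta\in(\tfrac13,\tfrac12]$ this gives $|A''(z^+)|$ between $N^{-1/3}$ and $N^{-1/4}$. After the substitution $u-z^+=N^{-1/3}e^{\mathfrak{i}\xi}\tau$ the quadratic exponent is $\tfrac12 N^{1/3}A''(z^+)e^{2\mathfrak{i}\xi}\tau^2$, whose coefficient has modulus as large as $N^{1/12}$ in this range, so it is not $O(|\tau|^2)$ and cannot simply be ``tolerated''. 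Second, you take a single tangent direction $e^{\mathfrak{i}\xi}$ through $z^+$. On one side of the saddle ($\tau>0$) the cubic term is $\tfrac16 e^{3\mathfrak{i}\xi}A'''(z^+)\tau^3$, but on the other side ($\tau<0$) it flips sign, so $\Re(e^{3\mathfrak{i}\xi}A'''(z^+))<0$ can provide decay along only one ray; on the other ray the cubic produces growth that, at the exact edge where $A''(z^+)=0$, nothing compensates. (There is also a small hole in the case split: the band $2+N^{-1/2}<|\tfrac{x}{cN}-c|\le 2+\epsilon_0$ is not covered by any of your three cases.)

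The paper's proof circumvents both problems at once: it bends the contour at $z^\pm$, using \emph{different} slopes $\xi_1\neq\xi_2$ on the two sides, and chooses $\xi_{1,2}\in(\tfrac\pi2,\tfrac{5\pi}6)$ so that along each branch \emph{both} $\Re\bigl(e^{2\mathfrak{i}\xi}A''(z^+)\bigr)<0$ and the cubic contribution has negative real part. (This is possible uniformly because $A''(z^+)$ is asymptotically purely imaginary by \eqref{eq:asympOfA''} and $A'''(z^+)$ tends to a nonzero real limit by \eqref{eq:asympOfA'''}.) Once the quadratic coefficient is known to have negative real part it can be discarded — it only helps — and one is left with $\int_0^\varepsilon e^{-D_2 Nt^3}\,dt\le C N^{-1/3}$, valid uniformly in $\delta$. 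The magnitude of $A''(z^+)$ never needs to be controlled; only its sign structure does. You would need to incorporate this sign analysis and the bent contour to repair your argument.
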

\begin{proof}
We present the proof of the result for $K_{x,N}^{+}(\gamma^+)$. %The proof for $K_{x,N}^{-}(\gamma^+)$ is completely identical.
As before, we drop the indices of $A(z)$ and $z^+$, which are $\frac x{cN}$ in this proof, and let $\tilde{\gamma}=\gamma^+-c^2N$. Let $\left|\frac x{cN}-c\right|=2- pN^{\delta-1}$, $\delta\geq 0$. 

Suppose $p>0$. In this case $A(z)$ has complex conjugate critical points. We deform the integration contour as contour $C_u$ in Lemma \ref{lem:depoisK2}, however with one difference: near the critical points we deform the contour to be piecewise linear with {\it different} slopes on each side of the critical points. More precisely, let $\xi_{1,2}\in(0,\pi)$ and deform the integration contour so that it is given by $z^\pm+e^{\pm\mathfrak{i}\xi_1}t,t>0$ and $z^\pm-e^{\pm\mathfrak{i}\xi_2}t, t>0$ near the critical points $z^\pm$. Choose $\xi_{1,2}$ so that both $\Re(e^{2\mathfrak{i}\xi}A''(z^+))<0$ and $\Re(e^{3\mathfrak{i}\xi}A'''(z^+))<0$. For example, when $\frac x{cN}-c=2- pN^{\delta-1}$, it follows from \eqref{eq:asympOfA''} and \eqref{eq:asympOfA'''} that $\frac \pi 2<\xi_{1,2}<\frac{5\pi}6$. 
Consider
\begin{equation*}
K_{x,N}^{+}(\gamma^+) e^{-N A(z^+)}=\frac 1{2\pi\mathfrak{i}}\oint_{C}e^{N(A(u)-A(z^+))}e^{\tilde{\gamma}u^{-1}}\ du.
\end{equation*}
We divide the contour into five sections: one away from the critical points and two linear sections near each critical point. That the contribution of the contour away from the critical points is exponentially small, can be seen as in Lemma \ref{lem:depoisK2}. The contribution of the linear sections near the critical points is of order
\begin{equation*}
B(N)=\int_0^\varepsilon e^{-N(D_1t^2N^{\frac{\delta-1}2}+D_2t^3)}e^{\tilde{\gamma}\Re\left((z^\pm+e^{\pm\mathfrak{i}\xi_{1,2}}t)^{-1}\right)}\ dt
\end{equation*}
for some positive constants $D_1$ and $D_2$. We estimate $B(N)$ as follows:
\begin{equation*}
B(N)\leq e^{C_2\tilde{\gamma}} \int_0^\varepsilon e^{-D_2Nt^3}\ dt=\frac{e^{C_2\tilde{\gamma}}}{N^{\frac 13}} \int_0^{\varepsilon N^{\frac 13}} e^{-D_2s^3}\ ds\leq C_1\frac{e^{C_2\tilde{\gamma}}}{N^{\frac 13}}.
\end{equation*}

When $p<0$, $A(z)$ has two real critical points. We deform the integration contour as in Lemma \ref{lem:KExtremes} and proceed as above.

$\qed$
\end{proof}

\begin{lemma}
\label{lem:estFuncK+Extremes}
%Suppose $c\neq 1$.
For any $\delta>\frac 13$ there exist constants $C_1>0$, $C_2>0$ and $C_3>0$ such that
\begin{equation*}
\left|K_{x,N}^{\pm}(\gamma^+)\right|\leq C_1 e^{\pm N \Re A_{\frac x{cN}}\left(z_{\frac x{cN}}^\pm\right)} e^{-C_3N^{\frac {3\delta}2-\frac 12}}e^{C_2|\gamma^+-c^2N|}
\end{equation*}
for all $x\notin\mathcal{I}^+_N(1,\delta)$, $x>-N$, all $\gamma^+$ and all $N\in\mathbb{N}$.
\end{lemma}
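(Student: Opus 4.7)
The strategy is to follow the saddle-point analysis already used in the proofs of Lemmas \ref{lem:estFuncK+}, \ref{lem:KExtremes}, and \ref{lem:estFuncK+OnEdge}, adapting it to the frozen regime where the critical points $z^{\pm}=z^{\pm}_{x/(cN)}$ of $A=A_{x/(cN)}$ are real and well separated. As in those proofs, set $\tilde\gamma=\gamma^{+}-c^{2}N$ and write
\begin{equation*}
K^{\pm}_{x,N}(\gamma^{+})\,e^{\mp NA(z^{\pm})}=\frac{1}{2\pi i}\oint_{C^{\pm}} e^{\pm N(A(u)-A(z^{\pm}))}\,e^{\pm\tilde\gamma u^{-1}}\,du,
\end{equation*}
where $C^{+}$ encircles $0$ once and $C^{-}$ encircles $1$ once (avoiding $0$). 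Since $x\notin\mathcal{I}^{+}_N(1,\delta)$, we may parametrise $|x/(cN)-c|=2+pN^{\delta-1}$ with $p\ge 1$, and the derivation analogous to \eqref{eq:asympOfA''} yields $|A''(z^{\pm})|\asymp\sqrt{p}\,N^{(\delta-1)/2}$, both real, with signs making $z^{+}$ (resp.\ $z^{-}$) a local extremum of $A|_{\mathbb{R}}$ of the appropriate type.

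Next, deform $C^{\pm}$ to pass through $z^{\pm}$ along the steepest-descent direction, which here is perpendicular to the real axis. The topology of this deformation is exactly that of Figures \ref{fig:contDefFroz} and \ref{fig:contDefEdgeSmC}: the deformed contour consists of a small loop passing through $z^{\pm}$ transversally together with a possible outer excursion, and the residue bookkeeping is precisely what makes the relevant saddle for $K^{+}$ be $z^{+}$ and for $K^{-}$ be $z^{-}$, accounting for the $\pm$ matching in the exponent $e^{\pm N\Re A(z^{\pm})}$. Split the deformed contour into an inner piece of length $t_{0}:=D_{1}N^{(\delta-1)/2}$ around $z^{\pm}$ and an outer piece.

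On the inner piece, Taylor expand
\begin{equation*}
A(z^{\pm}+it)-A(z^{\pm})=-\tfrac{1}{2}A''(z^{\pm})t^{2}+O(t^{3}),
\end{equation*}
so at scale $|t|\le t_{0}$ we have $N\Re(A(u)-A(z^{\pm}))\le -D_{2}\sqrt{p}\,N^{(3\delta-1)/2}+O(N^{(3\delta-2)/2})$ with the correct sign; since $\delta>\tfrac{1}{3}$, the quadratic term dominates the cubic remainder and the contour can be chosen so that $\Re(A-A(z^{\pm}))$ is bounded by $-D_{3}N^{(3\delta-3)/2}$ along the whole inner piece. On the outer piece, the construction can be arranged (as in the proof of Lemma \ref{lem:KExtremes}) so that $\Re(A-A(z^{\pm}))$ is bounded below in absolute value by an $N$-independent constant, producing even stronger decay $e^{-C N}$. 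The remaining factor $e^{\pm\tilde\gamma u^{-1}}$ is bounded by $e^{C_{2}|\tilde\gamma|}$ because $|u|$ stays bounded away from $0$ throughout the deformed contour. Integrating the contour length (polynomial in $N$) against these exponential bounds absorbs everything into constants $C_{1},C_{2},C_{3}$ and yields the claimed estimate.

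The main obstacle is carrying out the contour deformation in a way that works uniformly for both $K^{+}$ (original contour around $0$) and $K^{-}$ (original contour around $1$) and for both frozen regimes $x/(cN)>c+2$ and $-1/c<x/(cN)<c-2$; in particular, one must track which residues are picked up, analogous to the separate $0<c<1$ and $c>1$ cases of Lemma \ref{lem:KExtremes}, and verify that in each configuration the inner loop can indeed be routed through $z^{\pm}$ transversal to the real axis without crossing a forbidden pole. A secondary technical point is that the choice $t_{0}=N^{(\delta-1)/2}$ saturates the balance between the quadratic and cubic terms in the Taylor expansion — a separate check is needed for the borderline case $\delta=\delta_{0}$ close to $\tfrac{1}{3}$, but this is exactly what the hypothesis $\delta>\tfrac{1}{3}$ buys us.
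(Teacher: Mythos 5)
Your deformation strategy contains a genuine error that prevents the proof from producing the \emph{exponential} factor $e^{-C_3N^{3\delta/2-1/2}}$. You deform the contour to \emph{pass through} $z^{\pm}$ along the steepest-descent direction and then claim that, on the inner piece $|t|\le t_0$, one has $N\Re\bigl(A(u)-A(z^{\pm})\bigr)\le -D_2\sqrt{p}\,N^{(3\delta-1)/2}$. This is false at $t=0$: there the exponent is exactly $0$. A contour that goes through a simple saddle $z^{\pm}$ with $|A''(z^{\pm})|\asymp N^{(\delta-1)/2}$ produces a Gaussian-type integral of size $\sim (N|A''(z^{\pm})|)^{-1/2}\asymp N^{-(\delta+1)/4}$; that is precisely the \emph{polynomial} bound of Lemma~\ref{lem:estFuncK+}, not the exponential bound claimed here. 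No choice of linear cut-off $t_0$ changes this, because the integrand equals $1$ at the saddle.

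The mechanism that actually produces exponential decay, and is what the paper (via Lemma~\ref{lem:KExtremes}) exploits, is that the integration contour is a free closed loop and need not go through the saddle at all. After the residue bookkeeping of Figure~\ref{fig:contDefEdgeSmC}, the relevant piece of the contour is chosen to be a level curve of $\Re A$ \emph{displaced} from $z^{\pm}$ by $t_0\asymp N^{(\delta-1)/2}$, i.e.\ a curve on which $\Re(A(w)-A(z^{-}))=D_2 t_0^2 N^{(\delta-1)/2}$ (and similarly with the opposite sign around $z^{+}$ for $K^{+}$). Along the entire displaced curve the factor $\bigl|e^{\mp N(A-A(z^{\pm}))}\bigr|$ is uniformly $e^{-D_2 N^{(3\delta-1)/2}}$, and the far part of the contour contributes $e^{-\mathrm{const}\cdot N}$; combining these gives the claimed bound. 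To repair your argument you would need to replace the "inner loop through $z^{\pm}$" with this offset level-curve contour (the condition $\delta>\tfrac13$ is what guarantees $N t_0^3=o(N t_0^2\,N^{(\delta-1)/2})$, so the cubic Taylor remainder does not disturb the level-curve picture). Your remarks about the $\pm$ bookkeeping, the outer excursion, and the boundedness of $e^{\pm\tilde\gamma u^{-1}}$ are all fine; it is only the inner-piece analysis that fails.
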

\begin{proof}
For $K_{x,N}^{-}(\gamma^+)$ deform the integration contour as contour $C_w$ in Lemma \ref{lem:KExtremes} and estimate the contour integral as in Lemma \ref{lem:KExtremes}. The only difference in obtaining the estimate for $K_{x,N}^{+}(\gamma^+)$ is that the contour should be deformed to pass through the {\it larger} of the two real critical points of $A_{\frac x{cN}}(z)$.

$\qed$
\end{proof}

\subsection{Several estimates of the correlation kernel}
In this section we use the estimates of the functions $K^{\pm}_{x,N}$ obtained in the previous section to obtain estimates for the correlation kernel.

\begin{lemma}
\label{lem:decayCorrK}
%Suppose $c\neq 1$.
For any $\varepsilon>0$ there exist constants $C_1$ and $C_2$ such that 
\begin{equation*}
\left|K_{N,\gamma^+}(x,y)\right|\leq C_1\frac{e^{C_2|\gamma^+-c^2N|}}{1+|x-y|}e^{N\Re\left(A_{\frac x{cN}}\left(z_{\frac x{cN}}^+\right)-A_{\frac y{cN}}\left(z_{\frac y{cN}}^+\right)\right)}
\end{equation*}
for all $x,y\in\mathcal{I}_N(\varepsilon)$, all $\gamma^+$ and all $N\in\mathbb{N}$.
\end{lemma}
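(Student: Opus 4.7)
My plan is to combine the representation of $K_{N,\gamma^+}(x,y)$ from Lemma \ref{lem:kerKbyFuncK}, which already supplies the factor $1/(x-y)$, with the pointwise bounds on $K^\pm_{x,N}$ from Lemma \ref{lem:estFuncK+}; the diagonal case $x=y$ is handled separately via Lemma \ref{lem:depoisK2} with $l=0$, where the target exponential factor is $1$ and
\begin{equation*}
|K_{N,\gamma^+}(x,x)|\leq\frac{\phi_{x/cN}}{\pi}+\frac{C_1 e^{C_2|\gamma^+-c^2N|}}{2cN-|x-c^2N|}\leq C(\varepsilon)\,e^{C_2|\gamma^+-c^2N|}
\end{equation*}
since the denominator is at least $c\varepsilon N$.

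For $x\neq y$ in $\mathcal{I}_N(\varepsilon)$, Lemma \ref{lem:kerKbyFuncK} gives
\begin{equation*}
(x-y)K_{N,\gamma^+}(x,y)=NK^+_{x,N-1}K^-_{y+1,N+1}-\gamma^+ K^+_{x-1,N}K^-_{y+2,N}.
\end{equation*}
Fix any $\delta_0\in(\tfrac13,1)$ and, for each sufficiently large $N$, choose $\delta=\delta_N\in[\delta_0,1)$ with $N^{\delta_N-1}\leq\varepsilon/4$; then the four indices $x-1,x,y+1,y+2$ all lie in $\mathcal{I}^-_N(1,\delta_N)$. Applying Lemma \ref{lem:estFuncK+} to each factor (the $O(1)$ shifts of the subscripts and of $N$ only contribute constant multiplicative errors in the exponent, absorbed into the constants), and pairing $K^+$ with $K^-$, the exponents combine into $N\Re(A_{x/cN}(z^+_{x/cN})-A_{y/cN}(z^+_{y/cN}))$ while the denominators combine into $N^{(\delta_N+1)/2}$.

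The key arithmetic is $N/N^{(\delta_N+1)/2}=(N^{\delta_N-1})^{-1/2}\leq(4/\varepsilon)^{1/2}$, and, writing $|\gamma^+|\leq c^2N+|\gamma^+-c^2N|$, also $|\gamma^+|/N^{(\delta_N+1)/2}\leq C(\varepsilon)(1+|\gamma^+-c^2N|/N)$. After absorbing the resulting polynomial factor in $|\gamma^+-c^2N|$ into the exponential by enlarging constants, we obtain
\begin{equation*}
|(x-y)K_{N,\gamma^+}(x,y)|\leq C(\varepsilon)\,e^{N\Re(A_{x/cN}(z^+_{x/cN})-A_{y/cN}(z^+_{y/cN}))}\,e^{C|\gamma^+-c^2N|}.
\end{equation*}
Dividing by $|x-y|\geq 1$ and using $1/|x-y|\leq 2/(1+|x-y|)$ completes the bound for large $N$; the finitely many remaining configurations (small $N$, bounded $x,y$) are handled by further enlarging $C_1,C_2$. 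The main obstacle is the polynomial loss $N^{(1-\delta)/2}$ in the saddle-point bounds of Lemma \ref{lem:estFuncK+}, which is not automatically bounded since one only has $\delta<1$; the bulk hypothesis $x,y\in\mathcal{I}_N(\varepsilon)$ is precisely what lets us take $\delta_N\to 1$ at the logarithmic rate needed to keep this loss uniformly bounded.
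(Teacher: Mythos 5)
Your approach is exactly the paper's---combine the integrated-by-parts representation of Lemma~\ref{lem:kerKbyFuncK} with the saddle-point bounds of Lemma~\ref{lem:estFuncK+} for $x\neq y$, and use Lemma~\ref{lem:depoisK2} for $x=y$---and the substance of the argument is correct. One arithmetic slip to tidy: having imposed only the one-sided constraint $N^{\delta_N-1}\le\varepsilon/4$, the chain $(N^{\delta_N-1})^{-1/2}\le(4/\varepsilon)^{1/2}$ goes the wrong way: that inequality requires $N^{\delta_N-1}\ge\varepsilon/4$. You want a two-sided pinning, e.g.\ take $\delta_N$ with $N^{\delta_N-1}=\varepsilon/4$ exactly (which is $\ge\delta_0$ once $N$ is large): the upper bound gives the inclusion $\mathcal{I}_N(\varepsilon)\subset\mathcal{I}^-_N(1,\delta_N)$ needed to invoke Lemma~\ref{lem:estFuncK+}, and the matching lower bound tames $N^{(1-\delta_N)/2}$. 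With that correction, the rest of your proposal---the $O(1)$ shifts in the indices and in $N$ producing only bounded multiplicative corrections in the exponent, the absorption of $|\gamma^+-c^2N|/N^{(\delta_N+1)/2}$ into $e^{C_2|\gamma^+-c^2N|}$, and the diagonal case via Lemma~\ref{lem:depoisK2} with the denominator $2cN-|x-c^2N|\ge\varepsilon cN$ on $\mathcal{I}_N(\varepsilon)$---works as stated and agrees with the paper's (terse) proof.
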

\begin{proof}
When $x\neq y$ this follows from Lemmas \ref{lem:kerKbyFuncK} and \ref{lem:estFuncK+}. When $x=y$ the result follows from Lemma \ref{lem:depoisK2}.

$\qed$
\end{proof}

\begin{lemma}
\label{lem:estKDiffDelta}
Let $K_1$ and $K_2$ be arbitrary positive constants, let $\frac 13<\delta_0\leq \delta_1,\delta_2\leq 1$, and let $x\in\mathcal{I}^-_N(K_1,\delta_1)$, $y\in\mathcal{I}^-_N(K_2,\delta_2)$. There exist constants $C_1,C_2>0$, which depend only on $K_1,K_2$ and $\delta_0$, such that for all $\gamma^+$ and for all $N\in\mathbb{N}$ the following hold. If $x$ and $y$ have the same sign, then
\begin{equation}
\label{eq:KsqEdgeSameSign}
|K_{N,\gamma^+}(x,y)K_{N,\gamma^+}(y,x)|\leq C_1e^{C_2|\gamma^+-c^2N|}\frac{N^{\frac{|\delta_1-\delta_2|}2}}{(1+|x-y|)^2}.
\end{equation}
If $x$ and $y$ have opposite signs, then
\begin{equation}
\label{eq:KsqEdgeOpSign}
|K_{N,\gamma^+}(x,y)K_{N,\gamma^+}(y,x)|\leq C_1e^{C_2|\gamma^+-c^2N|}\frac{N^{1-\frac{\delta_1+\delta_2}2}}{(1+|x-y|)^2}.
\end{equation}
\end{lemma}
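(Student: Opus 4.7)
The plan is to start from the representation in Lemma \ref{lem:kerKbyFuncK},
\begin{equation*}
(x-y) K_{N,\gamma^+}(x,y) = N K^+_{x,N-1}(\gamma^+) K^-_{y+1,N+1}(\gamma^+) - \gamma^+ K^+_{x-1,N}(\gamma^+) K^-_{y+2,N}(\gamma^+),
\end{equation*}
and plug in the individual bounds from Lemma \ref{lem:estFuncK+}. Writing $\tilde\gamma = \gamma^+ - c^2 N$ and applying the estimate $|K^\pm_{x,N}(\gamma^+)| \leq C e^{\pm N \Re A_{x/(cN)}(z^+_{x/(cN)})} e^{C|\tilde\gamma|}/N^{(\delta+1)/4}$ to each of the four factors, then using the triangle inequality and $|\gamma^+| = O(N)$, yields
\begin{equation*}
|K_{N,\gamma^+}(x,y)| \leq \frac{C e^{C|\tilde\gamma|} N^{(2-\delta_1-\delta_2)/4}}{|x-y|}\, e^{N\Re[A_{x/(cN)}(z^+_{x/(cN)}) - A_{y/(cN)}(z^+_{y/(cN)})]}.
\end{equation*}
The essential feature is that the exponential phase factor in $|K(x,y)|$ is the reciprocal of the one in $|K(y,x)|$, so they cancel identically in the product. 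Since $(2-\delta_1-\delta_2)/2 = 1 - (\delta_1+\delta_2)/2$, this already proves the opposite-sign bound \eqref{eq:KsqEdgeOpSign}; the $1$ in the denominator $1+|x-y|$ absorbs the case $x=y$, which is handled directly by the diagonal estimate from Lemma \ref{lem:depoisK2}.

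For the same-sign bound \eqref{eq:KsqEdgeSameSign} the naive per-factor estimate is deficient by $N^{(1-\max(\delta_1,\delta_2))/2}$, and the improvement must come from Lemma \ref{lem:estDiffK+}. The substitutions
\begin{equation*}
K^+_{x,N-1} = \sign(x-1)\, c\, K^+_{x-1,N} + E_1, \qquad K^-_{y+2,N} = \sign(y+1)\, c\, K^-_{y+1,N+1} + E_2,
\end{equation*}
feed $|E_i|$ into the numerator with a gain of $N^{(\delta_i-1)/2}$ relative to the individual $K^\pm$. When the same-sign hypothesis $\sign(x-1) = \sign(y+1)$ holds, the coefficient in front of the leading cross term $K^+_{x-1,N} K^-_{y+1,N+1}$ reduces to $c(N-\gamma^+) = cN(1-c^2) - c\tilde\gamma$; the plan is to use this sign-matching to arrange for the $E_i$-error terms to dominate, producing $|K(x,y)| \leq C e^{C|\tilde\gamma|} N^{|\delta_1-\delta_2|/4}/|x-y|$, whose square (together with the phase cancellation of the previous paragraph) is precisely \eqref{eq:KsqEdgeSameSign}.

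The hardest point is that the factor $c(N-\gamma^+)$ in front of the leading term is itself of order $N$ for $c\neq 1$, so a single application of Lemma \ref{lem:estDiffK+} does not by itself achieve the promised improvement. The plan to overcome this is to combine the substitution with the recurrence
\begin{equation*}
N K^+_{x,N-1}(\gamma^+) = x K^+_{x-1,N}(\gamma^+) - \gamma^+ K^+_{x-2,N}(\gamma^+),
\end{equation*}
obtained by integrating $\partial_u [e^{\gamma^+/u}(1-u)^N u^x]$ around the $u$-contour, together with its $K^-$ analog. These recurrences convert the $c(N-\gamma^+) K^+_{x-1,N} K^-_{y+1,N+1}$ term into expressions to which Lemma \ref{lem:estDiffK+} can be applied a second time; the sign-matching hypothesis is precisely what makes the further application produce a genuine cancellation (rather than merely shifting indices), because the analogous coefficient in the opposite-sign case would be $c(N+\gamma^+) = c(1+c^2)N + O(|\tilde\gamma|)$, with no sign change available to trigger the cancellation. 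This is also consistent with the fact that only the same-sign case admits the improved bound.
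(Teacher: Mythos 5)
Your treatment of the opposite-sign case and the diagonal $x=y$ is correct and is exactly what the paper does: plug the per-factor bounds of Lemma~\ref{lem:estFuncK+} into the representation of Lemma~\ref{lem:kerKbyFuncK}, observe that the phase $e^{N\Re(A_{x/cN}(z^+_{x/cN})-A_{y/cN}(z^+_{y/cN}))}$ in $K(x,y)$ is the reciprocal of the one in $K(y,x)$, and handle $x=y$ by Lemma~\ref{lem:depoisK2}.

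The same-sign case is where there is a genuine gap. You substitute $K^+_{x,N-1}$ and $K^-_{y+2,N}$ simultaneously and, as you correctly compute, the coefficient on the surviving cross term becomes $c(N-\gamma^+)=cN(1-c^2)-c\tilde\gamma$, which is of order $N$ for $c\neq 1$ and is therefore too large by exactly the factor $N^{(1-\max(\delta_1,\delta_2))/2}$ you identified. Your proposed fix via the recurrence $N K^+_{x,N-1}=x K^+_{x-1,N}-\gamma^+K^+_{x-2,N}$ (and its $K^-$ analogue) is not carried out and does not obviously close the gap: that identity only trades the factor $N$ for a factor $x$ or $\gamma^+$, both again of order $N$, so no further cancellation falls out of it. The paper uses a different decomposition. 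Rather than substituting both factors at once, it inserts and removes the single cross term $\sign(y)cN\,K^+_{x,N-1}K^-_{y+2,N}$, yielding three pieces: $N K^+_{x,N-1}\bigl(K^-_{y+1,N+1}-\sign(y)cK^-_{y+2,N}\bigr)$, $\sign(y)cN\bigl(K^+_{x,N-1}-\sign(x)cK^+_{x-1,N}\bigr)K^-_{y+2,N}$, and a residual $\bigl(\sign(x)\sign(y)c^2N-\gamma^+\bigr)K^+_{x-1,N}K^-_{y+2,N}$. Done sequentially in this direction, each substitution transfers a factor $c$ toward the pair $K^+_{x-1,N}K^-_{y+2,N}$, so in the same-sign case the residual coefficient is $c\cdot c\cdot N-\gamma^+=-\tilde\gamma$, which is then absorbed into $e^{C|\tilde\gamma|}$ after a case split on $|\tilde\gamma|\lessgtr N^{1/2}$. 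Concretely, the source of your $O(N)$ leftover is the \emph{direction} of the $K^-$ substitution: the difference whose phase insertion vanishes at the saddle $z^+_{y/(cN)}\approx c/(c\pm1)$ is $K^-_{y+1,N+1}-\sign(y)cK^-_{y+2,N}$ (insertion proportional to $w-c(1-w)$), not $K^-_{y+2,N}-\sign(y+1)cK^-_{y+1,N+1}$ (insertion proportional to $1-(1+c)w$); using the former, your own scheme already produces $c^2N-\gamma^+=-\tilde\gamma$ with no recurrence needed.
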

\begin{proof}
If $x=y$, the result follows immediately from Lemma \ref{lem:depoisK2}. If $x\neq y$ and they have the same sign, from Lemma \ref{lem:kerKbyFuncK} we obtain
\begin{align*}
K_{N,\gamma^+}(x,y)
=&\frac{1}{x-y}\bigg(N K_{x,N-1}^{+}(\gamma^+)\Big(K_{y+1,N+1}^{-}(\gamma^+)-\sign(y)cK_{y+2,N}^{-}(\gamma^+)\Big)
\\&+\sign(y)Nc\Big(K_{x,N-1}^{+}(\gamma^+)-\sign(x)cK_{x-1,N}^{+}(\gamma^+)\Big)K_{y+2,N}^{-}(\gamma^+)
\\&+(\gamma^+-c^2N) K_{x-1,N}^{+}(\gamma^+)K_{y+2,N}^{-}(\gamma^+)\bigg).
\end{align*}
If $|\gamma^+-c^2N|<N^{\frac 12}$, applying Lemmas \ref{lem:estFuncK+} and \ref{lem:estDiffK+} we obtain \eqref{eq:KsqEdgeSameSign}. If $|\gamma^+-c^2N|>N^{\frac 12}$, applying the same lemmas we obtain
\begin{equation*}
|K_{N,\gamma^+}(x,y)K_{N,\gamma^+}(y,x)|\leq C_1e^{C_2|\gamma^+-c^2N|}\frac{|\gamma^+-c^2N|^2N^{-\frac{\delta_1+\delta_2+2}2}}{(1+|x-y|)^2},
\end{equation*}
which implies \eqref{eq:KsqEdgeSameSign} with a larger $C_2$.

When $x$ and $y$ have opposite signs, \eqref{eq:KsqEdgeOpSign} follows from Lemmas \ref{lem:kerKbyFuncK} and \ref{lem:estFuncK+}.

$\qed$
\end{proof}
\begin{remark}
\label{rem:KsqEdge2}
Notice that one of the sets $\mathcal{I}^-_N(K_1,\delta_1)$, $\mathcal{I}^-_N(K_2,\delta_2)$ is contained in the other. If, for example, $\mathcal{I}^-_N(K_1,\delta_1)\subset\mathcal{I}^-_N(K_2,\delta_2)$, then both $x$ and $y$ are in $\mathcal{I}^-_N(K_2,\delta_2)$, whence \eqref{eq:KsqEdgeSameSign} implies the better estimate
\begin{equation*}
%\label{eq:KsqEdge2}
|K_{N,\gamma^+}(x,y)K_{N,\gamma^+}(y,x)|\leq \frac{C_1e^{C_2|\gamma^+-c^2N|}}{(1+|x-y|)^2}.
\end{equation*}
\end{remark}

\begin{lemma}
\label{lem:estKBulkEdge}
Let $K>0$ and $\frac 13<\delta_0\leq \delta_1<1$. There exist constants $C_1,C_2>0$, which depend only on $K$ and $\delta_0$, such that for all $\gamma^+$, for all $x\in\mathcal{I}^-_N(K_1,\delta_1)$, for all $y$ and for all $N\in\mathbb{N}$ we have
\begin{equation*}
|K_{N,\gamma^+}(x,y)K_{N,\gamma^+}(y,x)|\leq C_1e^{C_2|\gamma^+-c^2N|}\frac{N^{\frac{5-3\delta_1}{6}}}{(1+|x-y|)^2}.
\end{equation*}
\end{lemma}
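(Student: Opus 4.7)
The strategy mirrors the proof of Lemma \ref{lem:estKDiffDelta} but, since $y$ is unconstrained, replaces the sharp bulk estimate of Lemma \ref{lem:estFuncK+} for the $y$-variable with the weaker universal estimate of Lemma \ref{lem:estFuncK+OnEdge}. This mismatch between the decay rates $N^{-(\delta_1+1)/4}$ (available for $x$) and $N^{-1/3}$ (only available for $y$) is exactly what produces the exponent $(5-3\delta_1)/6$ in place of the $|\delta_1-\delta_2|/2$ exponent of Lemma \ref{lem:estKDiffDelta}.

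First I would dispose of the diagonal case $x=y$. Since $x\in\mathcal{I}^-_N(K_1,\delta_1)$ forces $2cN-|x-c^2N|\geq K_1 c N^{\delta_1}$, Lemma \ref{lem:depoisK2} applied with $l=0$ yields $|K_{N,\gamma^+}(x,x)|\leq C_1 e^{C_2|\gamma^+-c^2N|}$, so squaring gives a bound trivially subsumed by the right-hand side because $N^{(5-3\delta_1)/6}\geq 1$.

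For $x\neq y$, I apply Lemma \ref{lem:kerKbyFuncK} and use the triangle inequality to write
$$|K_{N,\gamma^+}(x,y)|\leq \frac{N|K^+_{x,N-1}(\gamma^+)||K^-_{y+1,N+1}(\gamma^+)|+|\gamma^+||K^+_{x-1,N}(\gamma^+)||K^-_{y+2,N}(\gamma^+)|}{|x-y|}.$$
Since $x\in\mathcal{I}^-_N(K_1,\delta_1)$, Lemma \ref{lem:estFuncK+} bounds each $|K^+_{x,\cdot}|$ by a constant times $e^{N\Re A_{x/cN}(z^+_{x/cN})}e^{C_2|\gamma^+-c^2N|}N^{-(\delta_1+1)/4}$, whereas $y$ is arbitrary so Lemma \ref{lem:estFuncK+OnEdge} only gives each $|K^-_{y,\cdot}|$ bounded by a constant times $e^{-N\Re A_{y/cN}(z^+_{y/cN})}e^{C_2|\gamma^+-c^2N|}N^{-1/3}$. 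Applying the symmetric bounds (with the roles of $x$ and $y$ swapped) to $|K_{N,\gamma^+}(y,x)|$, the gauge factors $e^{\pm N\Re A_{x/cN}(\cdot)}$ and $e^{\pm N\Re A_{y/cN}(\cdot)}$ cancel pairwise in the product.

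Assembling the estimates, I obtain
$$|K_{N,\gamma^+}(x,y)K_{N,\gamma^+}(y,x)|\leq \frac{C_1\,(N+|\gamma^+|)^2\,e^{C_2|\gamma^+-c^2N|}\cdot N^{-(\delta_1+1)/2-2/3}}{|x-y|^2}.$$
Splitting $(N+|\gamma^+|)^2\leq C N^2+C|\gamma^+-c^2N|^2$, the first piece produces the desired exponent $2-(\delta_1+1)/2-2/3=(5-3\delta_1)/6$, while the polynomial factor $|\gamma^+-c^2N|^2$ is absorbed into $e^{C_2|\gamma^+-c^2N|}$ after enlarging $C_2$. Replacing $|x-y|^{-2}$ by $(1+|x-y|)^{-2}$ (and combining with the diagonal case) changes only an absolute constant. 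The main point to be careful about is the book-keeping of the gauge factors $e^{\pm N\Re A}$ and the fact that the universal $N^{-1/3}$ bound of Lemma \ref{lem:estFuncK+OnEdge} is valid for \emph{all} $y$ (not just those outside the bulk), which is precisely what makes the unconstrained $y$ tractable.
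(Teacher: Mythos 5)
Your proof is correct and coincides with the paper's (one-line) proof, which cites exactly the same three lemmas — \ref{lem:kerKbyFuncK}, \ref{lem:estFuncK+}, and \ref{lem:estFuncK+OnEdge} — with the diagonal case handled via Lemma \ref{lem:depoisK2}. The exponent arithmetic $2-\tfrac{\delta_1+1}{2}-\tfrac 23=\tfrac{5-3\delta_1}{6}$, the gauge-factor cancellation, and the absorption of $|\gamma^+-c^2N|^2$ into the exponential are all as intended.
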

\begin{proof}
The lemma follows immediately from Lemmas \ref{lem:kerKbyFuncK}, \ref{lem:estFuncK+} and \ref{lem:estFuncK+OnEdge}.

$\qed$
\end{proof}

\begin{lemma}
\label{lem:estKExtremes}
Let $K_1$ and $K_2$ be arbitrary positive constants, let $\frac 13<\delta_0\leq \delta_1<1$, $\frac 13\leq \delta_2$, and let $x\in\mathcal{I}^-_N(K_1,\delta_1)$, $y\notin\mathcal{I}^+_N(K_2,\delta_2)$, $y>-N$. There exist constants $C_1,C_2,C_3>0$, which depend only on $K_1,K_2$ and $\delta_0$, such that for all $\gamma^+$ and for all $N\in\mathbb{N}$ we have
\begin{equation*}
|K_{N,\gamma^+}(x,y)K_{N,\gamma^+}(y,x)|\leq C_1e^{C_2|\gamma^+-c^2N|}\frac{e^{-C_3N^{\frac {3\delta}2-\frac 12}}}{(1+|x-y|)^2}.
\end{equation*}
\end{lemma}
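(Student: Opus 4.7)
The proof follows the pattern of Lemmas \ref{lem:estKDiffDelta} and \ref{lem:estKBulkEdge}. Expand both $K_{N,\gamma^+}(x,y)$ and $K_{N,\gamma^+}(y,x)$ via the integration-by-parts identity \eqref{eq:KDiffForm} of Lemma \ref{lem:kerKbyFuncK}, so that the product $|K_{N,\gamma^+}(x,y)K_{N,\gamma^+}(y,x)|$ contributes the denominator $1/(x-y)^2$ together with four factors of the form $K^\pm_{\cdot,\cdot}(\gamma^+)$ to be estimated. Write $\tilde x = x/(cN)$ and $\tilde y = y/(cN)$. The two factors located at $x$ lie in the bulk, so Lemma \ref{lem:estFuncK+} applies (with a minor adjustment of $\delta_1$ when $K_1<1$) and yields bounds of the form $|K^\pm_{x,\cdot}(\gamma^+)| \lesssim e^{\pm N \Re A_{\tilde x}(z^+_{\tilde x})}$ times polynomial losses in $N$ and the standard $e^{C_2 |\gamma^+ - c^2 N|}$ factor; since both $K^+_x$ and $K^-_x$ involve the same saddle $z^+_{\tilde x}$, these two exponents telescope against each other in the product.

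For the two factors at $y$, the natural approach of invoking Lemma \ref{lem:estFuncK+Extremes} on both $K^+_y$ and $K^-_y$ runs into a genuine subtlety. That lemma estimates $K^+_y$ through $z^+_{\tilde y}$ and $K^-_y$ through the \emph{distinct} real saddle $z^-_{\tilde y}$, so the two exponents do \emph{not} cancel: a residual factor $e^{N \Re(A_{\tilde y}(z^+_{\tilde y}) - A_{\tilde y}(z^-_{\tilde y}))}$ remains. A Taylor expansion of $A_{\tilde y}$ at the degenerate saddle on the edge $\{|\tilde y - c| = 2\}$ (where $A''$ vanishes and the leading $(z-z_0)^3$ term controls the splitting) shows this residual has the same order $N^{3\delta_2/2 - 1/2}$ as the harvested smallness $e^{-2 C_3 N^{3\delta_2/2 - 1/2}}$, and its sign is not controlled a priori. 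The clean fix is to apply Lemma \ref{lem:estFuncK+Extremes} only to $K^+_y$ (harvesting one edge factor $e^{-C_3 N^{3\delta_2/2 - 1/2}}$) and to estimate the paired $K^-_y$ via the universal Lemma \ref{lem:estFuncK+OnEdge}, which routes both signs through the single saddle $z^+_{\tilde y}$ at the price of only an $N^{-1/3}$ polynomial loss.

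With this asymmetric pairing, all four $\Re A$ exponents in the product telescope, leaving the single edge factor $e^{-C_3 N^{3\delta_2/2 - 1/2}}$, the denominator $1/(x-y)^2$, polynomial losses in $N$ (including the $N^2$ from the coefficients $N$ and $\gamma^+$ appearing in \eqref{eq:KDiffForm}), and four accumulated $e^{C_2 |\gamma^+ - c^2 N|}$ factors. Because $3\delta_2/2 - 1/2 > 0$ for $\delta_2 > 1/3$, the polynomial factors are absorbed into a slightly smaller $C_3$, and the $|\gamma^+ - c^2 N|$ factors combine under an enlarged $C_2$. The main obstacle, as explained in the second paragraph, is precisely this asymmetric treatment: the two distinct saddles used by Lemma \ref{lem:estFuncK+Extremes} prevent a direct uniform application to both factors at $y$, and the key observation is that Lemma \ref{lem:estFuncK+OnEdge}, although weaker in its polynomial rate, provides a uniform-saddle substitute for exactly one of the two factors, which is sufficient to make the exponents telescope without any uncontrolled residual.
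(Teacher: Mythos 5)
Your proposal is correct and uses the same ingredients as the paper's proof (\emph{integration by parts via Lemma \ref{lem:kerKbyFuncK}, one bulk estimate at $x$ from Lemma \ref{lem:estFuncK+}, and an asymmetric pairing of Lemmas \ref{lem:estFuncK+Extremes} and \ref{lem:estFuncK+OnEdge} on the two types of $y$--factors}), but with the roles of the two $y$--lemmas swapped relative to the paper. The paper applies Lemma \ref{lem:estFuncK+Extremes} to the $K^-_y$ factors inside $K_{N,\gamma^+}(x,y)$ and Lemma \ref{lem:estFuncK+OnEdge} to the $K^+_y$ factors inside $K_{N,\gamma^+}(y,x)$, whereas you do the reverse. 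Both pairings do one of each, which is the key choice; the paper certainly does not, as you worry, apply Lemma \ref{lem:estFuncK+Extremes} to both $K^+_y$ and $K^-_y$ at once.

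That said, your reasoning about the two distinct real saddles is the right way to see why a one--of--each pairing is needed, and your particular pairing is in fact the tidier one: since Lemma \ref{lem:estFuncK+Extremes} reports $K^+$ in terms of $z^+$ and Lemma \ref{lem:estFuncK+OnEdge} reports $K^-$ also in terms of $z^+$, your bound at $y$ cancels exactly as $e^{+N\Re A_{\tilde y}(z^+_{\tilde y})}\cdot e^{-N\Re A_{\tilde y}(z^+_{\tilde y})}=1$. In the paper's pairing, Lemma \ref{lem:estFuncK+Extremes} on $K^-_y$ reports $z^-_{\tilde y}$ while Lemma \ref{lem:estFuncK+OnEdge} on $K^+_y$ reports $z^+_{\tilde y}$, which leaves a residual $e^{N\Re(A_{\tilde y}(z^+_{\tilde y})-A_{\tilde y}(z^-_{\tilde y}))}$ of the same order $e^{O(N^{3\delta_2/2-1/2})}$ as the harvested smallness — precisely the telescoping failure you diagnose. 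The displayed intermediate estimate for $|K_{N,\gamma^+}(x,y)|$ in the paper's proof writes $z^+_{\tilde y}$ in that exponent as if no residual appears, so either the sign of this splitting is implicitly being invoked as favorable, or there is a small imprecision in the paper that your swap sidesteps. Either way your version of the argument closes the bound cleanly; the accounting of the $e^{C_2|\gamma^+-c^2N|}$ factors and the absorption of the polynomial losses (including the $N$ and $\gamma^+$ coefficients from \eqref{eq:KDiffForm}) into a slightly smaller $C_3$ is also correct, since $3\delta_2/2-1/2>0$.
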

\begin{proof}
Lemmas \ref{lem:kerKbyFuncK}, \ref{lem:estFuncK+} and \ref{lem:estFuncK+Extremes} imply
\begin{equation*}
\left|K_{N,\gamma^+}(x,y)\right|\leq C_1\frac{e^{C_2|\gamma^+-c^2N|}e^{-C_3N^{\frac {3\delta}2-\frac 12}}}{1+|x-y|}e^{N\Re\left(A_{\frac x{cN}}\left(z_{\frac x{cN}}^+\right)-A_{\frac y{cN}}\left(z_{\frac y{cN}}^+\right)\right)},
\end{equation*}
while Lemmas \ref{lem:kerKbyFuncK}, \ref{lem:estFuncK+} and \ref{lem:estFuncK+OnEdge} imply
\begin{equation*}
\left|K_{N,\gamma^+}(y,x)\right|\leq C_1\frac{e^{C_2|\gamma^+-c^2N|}N^{\frac{5-3\delta_1}{12}}}{1+|x-y|}e^{N\Re\left(A_{\frac y{cN}}\left(z_{\frac y{cN}}^+\right)-A_{\frac x{cN}}\left(z_{\frac x{cN}}^+\right)\right)}.
\end{equation*}
Combining the two estimates completes the proof.

$\qed$
\end{proof}

\subsection{Decay of correlations in the bulk}
\label{subsec:decayOfCorr}
In this section we use the estimates of the correlation kernel obtained in the previous section to estimate the decay of correlations in the bulk.

\begin{proposition}
\label{prop:covK}
For any $\varepsilon>0$ and any integer $L>0$ there exist positive constants $C_1=C_1(\varepsilon,L)$ and $C_2=C_2(\varepsilon,L)$ such that 
\begin{align*}
Cov_{\mathbb{P}^{\gamma^+,0}_N}(x,\vec{l};y,\vec{m})
:&=\left|\mathbb{E}_{\mathbb{P}^{\gamma^+,0}_N}(c_{x+\vec{l}}\cdot c_{y+\vec{m}})-\mathbb{E}_{\mathbb{P}^{\gamma^+,0}_N}(c_{x+\vec{l}})\mathbb{E}_{\mathbb{P}^{\gamma^+,0}_N}(c_{y+\vec{m}})\right|
\\&\leq C_1\frac{e^{C_2|\gamma^+-c^2N|}}{(1+|x-y|)^2}
\end{align*}
for all $x,y\in\mathcal{I}_N(\varepsilon)$, all integer vectors $\vec{l}$ and $\vec{m}$ satisfying $|\vec{l}|,|\vec{m}|\leq L$, all $\gamma^+$ and all $N\in\mathbb{N}$.
\end{proposition}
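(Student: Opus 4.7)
My plan is to exploit the determinantal structure provided by Theorem \ref{thm:intFormK} and expand the covariance as a signed sum over permutations that mix two blocks of rows; the decay then follows because each such permutation produces at least two ``cross--block'' kernel entries of size $O(1/(1+|x-y|))$, and because the exponential factors appearing in Lemma \ref{lem:decayCorrK} telescope cleanly across a permutation.

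First, I would let $r$ and $s$ be the lengths of $\vec{l}$ and $\vec{m}$ and enumerate the $r+s$ positions as $p_i=x+l_i$ for $i\le r$ and $p_{r+j}=y+m_j$ for $j\le s$. Since $|\vec{l}|,|\vec{m}|\le L$ and $x,y\in\mathcal{I}_N(\varepsilon)$, for $N$ sufficiently large every $p_i$ lies in $\mathcal{I}_N(\varepsilon/2)$; the finitely many small-$N$ cases can be absorbed into $C_1$ because the covariance is trivially bounded by $1$. By Theorem \ref{thm:intFormK}, writing the $(r+s)\times(r+s)$ kernel matrix in block form and splitting the Leibniz expansion of its determinant into block-preserving permutations (in $S_r\times S_s$, which sum exactly to $\det A\cdot \det D=\mathbb{E}(c_{x+\vec{l}})\mathbb{E}(c_{y+\vec{m}})$) versus the rest, I would arrive at
\[
\mathbb{E}(c_{x+\vec{l}}\,c_{y+\vec{m}}) - \mathbb{E}(c_{x+\vec{l}})\mathbb{E}(c_{y+\vec{m}}) = \sum_{\sigma\notin S_r\times S_s}\operatorname{sgn}(\sigma)\prod_{i=1}^{r+s}K_{N,\gamma^+}(p_i,p_{\sigma(i)}).
\]

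The key step will be to observe that Lemma \ref{lem:decayCorrK} bounds each factor by a polynomial denominator $(1+|p_i-p_{\sigma(i)}|)^{-1}$ times an exponential weight $\exp\bigl(N\Re(A_{p_i/(cN)}(z^+_{p_i/(cN)})-A_{p_{\sigma(i)}/(cN)}(z^+_{p_{\sigma(i)}/(cN)}))\bigr)$, and the product of these exponentials over $i=1,\dots,r+s$ equals $1$ because $\sigma$ is a bijection and each index appears once as a source and once as a target. After this cancellation, every term in the sum is bounded by $C(L)\,e^{C(L)|\gamma^+-c^2N|}\prod_i(1+|p_i-p_{\sigma(i)}|)^{-1}$. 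A non-block-preserving $\sigma$ has some $k\ge 1$ indices moving from block $1$ to block $2$ and, by counting, $k$ moving back; this produces at least two cross-block factors, each bounded by $C(L)/(1+|x-y|)$ via $|p_i-p_{\sigma(i)}|\ge |x-y|-2L$, while within-block factors are at most $1$. Summing over the at most $(r+s)!\le (2L+1)!$ non-block-preserving permutations gives the stated bound.

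I expect the argument to go through essentially mechanically. The only mildly delicate points are the clean telescoping of the exponentials (immediate from $\sigma$ being a permutation) and the uniform applicability of Lemma \ref{lem:decayCorrK} to the shifted points $p_i$, which is handled by slightly shrinking $\varepsilon$. I do not anticipate any substantive obstacle beyond this bookkeeping.
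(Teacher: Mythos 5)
Your argument is correct and follows essentially the same path as the paper's: both write $\mathbb{E}(c_{x+\vec{l}}\,c_{y+\vec{m}})$ as a $2\times2$ block determinant, identify the covariance as the contribution of non--block-preserving permutations (equivalently, terms with at least one factor from each off-diagonal block), bound those cross-block factors via Lemma~\ref{lem:decayCorrK}, and observe that the exponential gauge factors cancel because the indices form a permutation. The extra bookkeeping you supply (the explicit Leibniz split, shrinking $\varepsilon$ to accommodate the shifts $\vec{l},\vec{m}$, and the counting argument showing at least two cross-block factors) is exactly what the paper leaves implicit; the only slight imprecisions are that ``within-block factors are at most $1$'' and ``covariance is trivially bounded by $1$'' need not hold for complex $\gamma^+$, but both are harmlessly replaced by $C_1e^{C_2|\gamma^+-c^2N|}$ bounds that absorb into the constants.
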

\begin{proof}
It follows from Theorem \ref{thm:intFormK} that $\mathbb{E}_{\mathbb{P}^{\gamma^+,0}_N}(c_{x+\vec{l}}\cdot c_{y+\vec{m}})$ is a determinant of the form
\begin{equation*}
\mathbb{E}_{\mathbb{P}^{\gamma^+,0}_N}(c_{x+\vec{l}}\cdot c_{y+\vec{m}})=\det A
=\det\left(\begin{array}{cc}B&C\\ D&E\end{array}\right),
\end{equation*}
where $\mathbb{E}_{\mathbb{P}^{\gamma^+,0}_N}(c_{x+\vec{l}})=\det B$ and $\mathbb{E}_{\mathbb{P}^{\gamma^+,0}_N}(c_{y+\vec{m}})=\det E$. 

Thus, it follows that $Cov_{\mathbb{P}^{\gamma^+,0}_N}(x,\vec{l};y,\vec{m})$ consists of terms in $\det A$ which have at least one factor from each of $C$ and $D$. Since the terms in $C$ and $D$ are of the form $K_{N,\gamma^+}(x+l_i,y+m_j)$, the proposition follows from Lemma \ref{lem:decayCorrK}. Note that the factors 
\begin{equation*}
e^{N\Re\left(A_{\frac x{cN}}\left(z_{\frac x{cN}}^+\right)-A_{\frac y{cN}}\left(z_{\frac y{cN}}^+\right)\right)}
\end{equation*}
cancel out, since we are taking a determinant.
% \begin{equation*}
% \left(
% \begin{array}{cc}
% \framebox(70,70){$\mathbb{E}_{\mathbb{P}^{\gamma^+,0}_N}(c_{x+\vec{l}})$}&\framebox(70,70){*}
% \\ \framebox(70,70){*} &\framebox(70,70){$\mathbb{E}_{\mathbb{P}^{\gamma^+,0}_N}(c_{x+\vec{l}})$}
% \end{array}
% \right)
% \end{equation*}

$\qed$
\end{proof}
\begin{proposition}
\label{prop:covOfPnN}
For any $\varepsilon>0$ and any integer $L>0$ there exists a positive constant $C=C(\varepsilon,L)$ such that 
\begin{equation*}
\left|\mathbb{E}_{\mathbb{P}_N^n}(c_{x+\vec{l}}\cdot c_{y+\vec{m}})-\mathbb{E}_{\mathbb{P}_N^n}(c_{x+\vec{l}})\mathbb{E}_{\mathbb{P}_N^n}(c_{y+\vec{m}})\right|\leq \frac{C}{\min\{N,(1+|x-y|)^2\}}
\end{equation*}
for all $x,y\in\mathcal{I}_N(\varepsilon)$, all integer vectors $\vec{l}$ and $\vec{m}$ satisfying $|\vec{l}|,|\vec{m}|\leq L$, all $N\in\mathbb{N}$ and $n= \lfloor c^2N^2\rfloor$.
\end{proposition}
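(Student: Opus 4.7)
The plan is to combine the Poissonized covariance estimate from Proposition \ref{prop:covK} with the Depoissonization Lemma \ref{lem:depoissonization}, applied separately to each of the three expectations making up the covariance. With $N=N(n)$ chosen so that $n=\lfloor c^2N^2\rfloor$, for fixed $x,y\in\mathcal{I}_N(\varepsilon)$ and $\vec{l},\vec{m}$ with $|\vec{l}|,|\vec{m}|\leq L$, introduce the entire functions of $z$
\[
g_n(z)=\mathbb{E}_{\mathbb{P}^{z/N,0}_N}(c_{x+\vec{l}}\cdot c_{y+\vec{m}}),\quad h_{1,n}(z)=\mathbb{E}_{\mathbb{P}^{z/N,0}_N}(c_{x+\vec{l}}),\quad h_{2,n}(z)=\mathbb{E}_{\mathbb{P}^{z/N,0}_N}(c_{y+\vec{m}}).
\]
By Lemma \ref{lem:poissonization} each admits the Poisson expansion $e^{-z}\sum_{k\geq 0}\frac{z^k}{k!}\mathbb{E}_{\mathbb{P}_N^k}(\cdot)$ required by the Depoissonization Lemma.

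I would next verify the two hypotheses of Lemma \ref{lem:depoissonization}, uniformly in $x,y\in\mathcal{I}_N(\varepsilon)$. Each of $g_n,h_{1,n},h_{2,n}$ is a determinant of bounded size (at most $2L\times 2L$) in entries of $K_{N,z/N}$, so Lemma \ref{lem:depoisK1} translates into the growth bound $|f_n(z)|\leq C_1e^{g_1\sqrt{n}}$ on $|z|=n$. For $|z-n|<n^{1-\alpha}$, the identity $|z/N-c^2N|=|z-c^2N^2|/N\leq C|z-n|/\sqrt{n}+O(1/\sqrt{n})$ together with Lemma \ref{lem:depoisK2} yields entrywise estimates of order $(C/N)e^{g_2|z-n|/\sqrt{n}}$, where uniformity in the bulk comes from $2cN-|a-c^2N|\geq \varepsilon cN$ for $a\in\mathcal{I}_N(\varepsilon)$. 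Standard perturbation of a fixed-size determinant propagates the bound to $|f_n(z)-f_\infty|\leq(C/N)e^{g_2|z-n|/\sqrt{n}}$, with $f_\infty$ the limiting sine-process determinant (gauge factors cancel upon taking the determinant). Applying Lemma \ref{lem:depoissonization} with $a_n=C/N$ then yields
\[
|\mathbb{E}_{\mathbb{P}_N^n}(c_{x+\vec{l}}\cdot c_{y+\vec{m}})-g_n(n)|\leq C/N,\qquad|\mathbb{E}_{\mathbb{P}_N^n}(c_{x+\vec{l}})-h_{1,n}(n)|\leq C/N,
\]
and similarly for $h_{2,n}$.

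Denoting the left-hand side of the statement by $\mathrm{Cov}_n$ and combining the last displays (using that all six quantities lie in $[0,1]$) gives $|\mathrm{Cov}_n-(g_n(n)-h_{1,n}(n)h_{2,n}(n))|\leq C/N$. But $g_n(n)-h_{1,n}(n)h_{2,n}(n)$ is precisely $\mathrm{Cov}_{\mathbb{P}^{n/N,0}_N}(x,\vec{l};y,\vec{m})$, which by Proposition \ref{prop:covK} is bounded by $C_1e^{C_2|n/N-c^2N|}/(1+|x-y|)^2$; since $|n/N-c^2N|=|n-c^2N^2|/N\leq 1/N$, the exponential factor is bounded, and this is at most $C'/(1+|x-y|)^2$. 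Adding the two pieces,
\[
|\mathrm{Cov}_n|\leq \frac{C}{N}+\frac{C'}{(1+|x-y|)^2}\leq \frac{C+C'}{\min\{N,(1+|x-y|)^2\}}.
\]
The main technical point I anticipate is ensuring that the constants in the depoissonization step are uniform over $x,y\in\mathcal{I}_N(\varepsilon)$, but this is essentially the same verification carried out in the proof of Proposition \ref{prop:localStatOfPnN}, enabled by the $\varepsilon$ in $\mathcal{I}_N(\varepsilon)$ which keeps the denominator in Lemma \ref{lem:depoisK2} bounded away from zero at scale $N$.
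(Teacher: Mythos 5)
Your overall plan --- depoissonize each expectation separately and then invoke Proposition \ref{prop:covK} at the real parameter $\gamma^+=n/N$ --- is sensible and close in spirit to the paper, but there is a genuine gap where you depoissonize the joint expectation $g_n(z)=\mathbb{E}_{\mathbb{P}^{z/N,0}_N}(c_{x+\vec{l}}\cdot c_{y+\vec{m}})$ with $a_n=C/N$. That determinant has size about $(|\vec{l}|+|\vec{m}|)$ and its off-diagonal block entries are of the form $K_{N,z/N}(x+l_i,\,y+m_j)$, where the separation $(y+m_j)-(x+l_i)$ is of order $|x-y|$, which over $\mathcal{I}_N(\varepsilon)$ can be as large as $O(N)$. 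Lemma \ref{lem:depoisK2} does not give you the entry-wise $O(1/N)$ estimate there: its constants $C_1(\delta_0,l)$ and $C_2(\delta_0,l)$ depend on the shift $l$, and the proof of that lemma (Taylor-expanding $w^{-l-1}$ near $z^+$) visibly degrades as $l$ grows. There is also no single sine-kernel ``limit'' to compare against for those entries, since $\phi_{x/(cN)}$ and $\phi_{y/(cN)}$ differ when $|x-y|\sim N$, and Theorem \ref{thm:limitOfK} only covers configurations with $x_i-x_j$ held fixed. Consequently your second depoissonization hypothesis, $\max_{|z-n|<n^{1-\alpha}}\bigl|g_n(z)-f_\infty\bigr|/e^{g_2|z-n|/\sqrt{n}}\le C\cdot(C/N)$, is not justified for $g_n$ (it is fine for $h_{1,n}$ and $h_{2,n}$, whose entries have shifts bounded by $2L$).

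The fix is exactly the move the paper makes: do not aim for a sine-process determinant as the limit of $g_n$. Instead, first combine Proposition \ref{prop:covK} (which already controls the Poissonized covariance by $C_1e^{C_2|\gamma^+-c^2N|}/(1+|x-y|)^2$, handling the problematic off-diagonal blocks via Lemma \ref{lem:decayCorrK}) with the diagonal-block replacement from Lemma \ref{lem:depoisK2} to get
\begin{equation*}
\Bigl|\,g_n(z)-\mathbb{E}_{\mathbb{S}(\phi_{x/(cN)})}(c_{\vec{l}})\,\mathbb{E}_{\mathbb{S}(\phi_{y/(cN)})}(c_{\vec{m}})\,\Bigr|\le \frac{C_1e^{C_2|z/N-c^2N|}}{\min\{N,(1+|x-y|)^2\}},
\end{equation*}
and then depoissonize this single quantity with $a_n=1/\min\{N,(1+|x-y|)^2\}$ and $f_\infty$ the \emph{product} of the one-sided sine expectations. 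Finishing with Proposition \ref{prop:localStatOfPnN} (to replace the sine expectations by $\mathbb{P}_N^n$-expectations with $O(1/N)$ error) recovers the stated bound. Your last-paragraph algebra is fine once this corrected depoissonization of $g_n$ is in place; as written, the $a_n=C/N$ claim for $g_n$ is the weak link.
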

\begin{proof}
If $x\in\mathcal{I}_N(\varepsilon)$, then $2cN-|x-c^2N|$ is of order $N$. Using Lemma \ref{lem:depoisK2} and Proposition \ref{prop:covK}, and noting that the terms $\left(\frac{c}{\sqrt{1+\frac xN}}\right)^{-l}$ in Lemma \ref{lem:depoisK2} cancel since we are taking determinants, we obtain
\begin{equation*}
\left|\mathbb{E}_{\mathbb{P}^{\gamma^+,0}_N}(c_{x+\vec{l}}\cdot c_{y+\vec{m}})-\mathbb{E}_{\mathbb{S}(\phi_{\frac x{cN}})}(c_{x+\vec{l}})\mathbb{E}_{\mathbb{S}(\phi_{\frac y{cN}})}(c_{y+\vec{m}})\right|\leq \frac{C_1e^{C_2|\gamma^+-c^2N|}}{\min\{N,(1+|x-y|)^2\}}.
\end{equation*}
Depoissonizing by Lemma \ref{lem:depoissonization} we obtain
\begin{equation*}
\left|\mathbb{E}_{\mathbb{P}_N^n}(c_{x+\vec{l}}\cdot c_{y+\vec{m}})-\mathbb{E}_{\mathbb{S}(\phi_{\frac x{cN}})}(c_{x+\vec{l}})\mathbb{E}_{\mathbb{S}(\phi_{\frac y{cN}})}(c_{y+\vec{m}})\right|\leq \frac{C}{\min\{N,(1+|x-y|)^2\}}.
\end{equation*}
Applying Proposition \ref{prop:localStatOfPnN} to this expression completes the proof.

$\qed$
\end{proof}

\section{Proof of Theorem \ref{thm:main}}
\label{sec:mainProof}
In this section we present the proof of the main theorem. We evaluate the limit of the terms in \eqref{eq:intFormMeas} separately.
\begin{lemma}
\label{lem:limitForContents}
For any $\varepsilon>0$ we have
\begin{equation*}
\lim_{\substack{N\rightarrow \infty\\n=\lfloor c^2N^2\rfloor}}\mathbb{P}_N^n\left\{\lambda:\left|\hat{\rho}(\lambda)\right|<\varepsilon\right\}=1,
\end{equation*}
where $\hat{\rho}(\lambda)$ is as in Proposition \ref{prop:intFormMeas}.
\end{lemma}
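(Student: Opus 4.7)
The plan is to establish a purely deterministic bound $|\hat{\rho}(\lambda)| = O(\log N / \sqrt{n})$ that holds uniformly over every $\lambda \in \mathbb{Y}_N^n$. Since $\sqrt{n}/N \to c > 0$, this quantity tends to $0$, so for every fixed $\varepsilon > 0$ the set $\{|\hat{\rho}(\lambda)|<\varepsilon\}$ coincides with all of $\mathbb{Y}_N^n$ once $N$ is sufficiently large, and the probability in the statement is then exactly $1$. In particular, the argument uses nothing about the Schur--Weyl measure beyond the fact that it is supported on $\mathbb{Y}_N^n$.

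First I would control $\mathfrak{m}$ pointwise. For any cell $(i,j)$ of $\lambda \in \mathbb{Y}_N^n$ one has $1 \leq i \leq N$ and $j \geq 1$, hence $N + c_{i,j} = N + j - i \geq 1$, so the defining series of $\mathfrak{m}(N+c_{i,j})$ converges. Observing that
\begin{equation*}
x^2\mathfrak{m}(x) = \frac{1}{6} + \sum_{k=2}^\infty \frac{1}{k(k+1)(2k+1)\, x^{2k-2}}
\end{equation*}
is nonnegative and monotonically decreasing on $[1,\infty)$, one gets $\mathfrak{m}(x) \leq M/x^2$ for all $x \geq 1$, where $M := \mathfrak{m}(1) < \infty$.

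Next I would bound the sum over cells by extending it to the strip $\{1,\ldots,N\}\times\{1,2,\ldots\}$ (legal by nonnegativity of $\mathfrak{m}$) and reindexing with $m = N + j - i$:
\begin{equation*}
\sum_{(i,j)\in \lambda} \mathfrak{m}(N+c_{i,j}) \leq M\sum_{i=1}^N \sum_{m=N-i+1}^\infty \frac{1}{m^2} \leq M\left(\frac{\pi^2}{6} + \sum_{i=1}^{N-1}\frac{1}{N-i}\right) = O(\log N).
\end{equation*}
Dividing by $2\sqrt{n}$ yields $|\hat{\rho}(\lambda)| = O(\log N/\sqrt{n}) \to 0$ uniformly in $\lambda$, completing the proof.

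There is no serious obstacle here; the only mild care needed is to use $M = \mathfrak{m}(1)$ in place of the large-$x$ asymptotic constant $1/6$ so that the pointwise inequality $\mathfrak{m}(x) \leq M/x^2$ remains valid down to $x=1$, which is attained by the extreme cell at $i=N$, $j=1$ precisely when $\lambda$ has all $N$ rows nonempty.
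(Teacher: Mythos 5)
Your proof is correct and takes essentially the same approach as the paper: both establish a deterministic, $\lambda$-independent bound $\hat{\rho}(\lambda) = O(\log N/\sqrt{n})$ by pairing a pointwise $O(x^{-2})$ bound on $\mathfrak{m}$ with the observation that a diagram with at most $N$ rows has at most $\min\{m,N\}$ cells of any given content. The paper obtains the decay of $\mathfrak{m}$ by computing $\mathfrak{m}'''$ and organizes the cell count by content via $\mathfrak{c}_{k-N}(\lambda)\leq\min\{k,N\}$, whereas you use the cleaner pointwise bound $\mathfrak{m}(x)\leq\mathfrak{m}(1)/x^2$ and extend the sum to the full $N\times\infty$ strip, but these are the same computation presented slightly differently.
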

\begin{proof}
Let $\mathfrak{c}_k(\lambda)$ be the number of cells in $\lambda$ with content $k$. Notice that if $\lambda\in\mathbb{Y}^n_N$, then $\mathfrak{c}_{k-N}(\lambda)\leq\min\{k,N\}$. Hence,
\begin{equation*}
\hat{\rho}(\lambda)=\sum_{k=1}^{\infty}\frac{\mathfrak{c}_{k-N}(\lambda)}{2\sqrt{n}}\mathfrak{m}(k)\leq \frac{1}{2\sqrt{n}} \sum_{k=1}^{N}\mathfrak{m}(k)k+\frac{N}{2\sqrt{n}} \sum_{k=N+1}^{\infty}\mathfrak{m}(k).
\end{equation*}
Differentiating $\mathfrak{m}(x)$ three times, we obtain
\begin{equation*}
\mathfrak{m}'''(x)=\sum_{k=1}^\infty \frac{4}{x^{2k+3}} = \frac{4}{x^3(x^2-1)},
\end{equation*}
whence there exists a constant $c>0$ such that
\begin{equation*}
\hat{\rho}(\lambda)\leq \frac{c \ln N}{2\sqrt n}+\frac c{2\sqrt n}.
\end{equation*}

$\qed$
\end{proof}

\begin{lemma}
\label{lem:sumsOfCs}
For any continuous bounded function $f:\mathbb{R}\rightarrow\mathbb{C}$, any integer vector $\vec{m}$, and any $\varepsilon>0$, we have the following convergence in measure:
\begin{multline*}
\lim_{\substack{N\rightarrow \infty\\n=\lfloor c^2N^2\rfloor}}
\mathbb{P}_N^n\left\{\lambda:\left|\frac 1{cN} \sum_{k=-N}^{\infty}f\left(\frac k{cN}\right)c_{k+\vec{m}}(\lambda)
\right.\right.\\\left.\left.-\left(\int_{c-2}^{c+2}f(a)\mathbb{E}_{\mathbb{S}(\phi_a)}c_{\vec{m}}\ da+\delta_{c<1}\int_{-\frac 1c}^{c-2}f(a)\ da\right)\right|<\varepsilon\right\}=1.
\end{multline*}
\end{lemma}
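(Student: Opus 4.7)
The plan is to apply the second moment (Chebyshev) method. Set $X_N(\lambda) := \frac{1}{cN}\sum_{k\geq -N} f(k/(cN))\, c_{k+\vec m}(\lambda)$; it will suffice to show that $\mathbb{E}_{\mathbb{P}_N^n}(X_N)$ converges to the claimed limit $L$ and that $\Var_{\mathbb{P}_N^n}(X_N) \to 0$, after which Chebyshev's inequality concludes the proof. Both moments will be controlled by partitioning the index range according to the rescaled position $a_k := k/(cN)$ into: a bulk $\{|a_k - c| \leq 2 - \eta\}$ for a small auxiliary $\eta > 0$; a filled frozen region $\{a_k \in [-1/c, c-2]\}$ when $c<1$; empty frozen regions ($a_k > c+2$, and additionally $a_k \in [-1/c, c-2]$ when $c>1$); narrow edge transition strips of $k$-width $O(N^\delta)$ about the two bulk edges $a = c \pm 2$, for a fixed $\delta$ just above $1/3$; and short bulk-edge strips of $a$-width $O(\eta)$ adjoining them on the bulk side.

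For the expectation, on the bulk I apply Proposition \ref{prop:localStatOfPnN} to get $\mathbb{E}_{\mathbb{P}_N^n}(c_{k+\vec m}) = \mathbb{E}_{\mathbb{S}(\phi_{a_k})}(c_{\vec m}) + O(N^{-1})$ uniformly. Together with the continuity of $a \mapsto \phi_a$ on $[c-2, c+2]$, the bulk partial sum is a Riemann sum converging to $\int_{c-2+\eta}^{c+2-\eta} f(a)\, \mathbb{E}_{\mathbb{S}(\phi_a)}(c_{\vec m})\, da$. On the filled frozen region (present only when $c<1$), Corollary \ref{cor:PnNNearEdges} applied to each factor of the product $c_{k+\vec m} = \prod_i c_{k+m_i}$ yields $\mathbb{E}_{\mathbb{P}_N^n}(c_{k+\vec m}) = 1 - o(1)$ uniformly, so the corresponding partial sum tends to $\int_{-1/c}^{c-2} f(a)\, da$. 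On the empty frozen regions, Proposition \ref{prop:PnNNearEdges} forces $c_{k+\vec m} \equiv 0$ with overwhelming probability, giving vanishing contribution. The edge strips contribute $O(N^{\delta-1})$ and the bulk-edge strips $O(\eta)$, both uniformly in $N$. Sending $N \to \infty$ and then $\eta \to 0$ recovers $L$.

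For the variance I expand
\begin{equation*}
\Var_{\mathbb{P}_N^n}(X_N) = \frac{1}{c^2N^2}\sum_{k_1,k_2} f(a_{k_1}) f(a_{k_2})\, \mathrm{Cov}_{\mathbb{P}_N^n}(c_{k_1+\vec m}, c_{k_2+\vec m}).
\end{equation*}
For bulk-bulk pairs, Proposition \ref{prop:covOfPnN} gives $|\mathrm{Cov}| \leq C/\min\{N, (1+|k_1-k_2|)^2\}$; summing over $k_2$ yields $O(N^{-1/2})$, then summing over the $O(N)$ bulk choices of $k_1$ and dividing by $N^2$ produces a contribution of order $N^{-3/2} \to 0$. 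When $k_1$ lies in a frozen region, $c_{k_1+\vec m}$ is Bernoulli with mean within an exponentially small quantity of $0$ or of $1$, so $\Var(c_{k_1+\vec m})$ is exponentially small and Cauchy--Schwarz renders the contribution of such pairs negligible. For $k_1$ in an edge transition strip, the crude bound $|\mathrm{Cov}| \leq 1$ combined with the strip size $O(N^\delta)$ gives a contribution of order $N^{\delta-1}$, which vanishes as $\delta < 1$.

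The principal technical obstacle is the interface between the bulk and the frozen regions, where neither the local sine-process asymptotics of Proposition \ref{prop:localStatOfPnN} (valid only on $\mathcal{I}_N(\varepsilon)$) nor the edge estimates of Proposition \ref{prop:PnNNearEdges} (controlling only deviations of size at least $N^\delta$) apply directly. The resolution is that these interface strips have $k$-width $O(N^\delta)$ with $\delta$ chosen slightly above $1/3$, so that the trivial bound $c_{k+\vec m} \leq 1$ alone suffices to make their contributions to both $\mathbb{E}_{\mathbb{P}_N^n}(X_N)$ and $\Var_{\mathbb{P}_N^n}(X_N)$ tend to zero.
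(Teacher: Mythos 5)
Your proposal follows essentially the same route as the paper's proof: a second-moment argument on $X_N$, partitioning the index range into a bulk where Propositions \ref{prop:localStatOfPnN} and \ref{prop:covOfPnN} give Riemann-sum convergence and covariance decay, frozen regions handled via Proposition \ref{prop:PnNNearEdges} and Corollary \ref{cor:PnNNearEdges}, and narrow interface strips absorbed by the auxiliary parameter $\eta\to 0$ (the paper's $\varepsilon_0$). The only departure is cosmetic, separating bias from variance rather than bounding the second moment of the centered deviation directly as the paper does; note a minor arithmetic slip in that $\sum_{k_2}\bigl(\min\{N,(1+|k_1-k_2|)^2\}\bigr)^{-1}$ is $O(1)$ rather than $O(N^{-1/2})$, though the resulting variance bound $O(1/N)$ still tends to zero, matching the paper's $O(\log N/N)$.
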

%{\it Note}: Limits of such weighted sum of the indicator function of appearance of a local pattern on the boundary of a Young diagram with respect to the Plancherel measure were obtained in \cite{Bu}.

\begin{proof}
Let $\varepsilon_0>0$ and $1>\delta>\frac 13$ be fixed. Throughout the proof, $C$ will denote an arbitrary constant that depends only on $\varepsilon_0$ and $f$.
% Define 
% \begin{equation*}
% A_{N,f,\vec{m}}(k)=f\left(\frac k{cN}\right)c_{k+\vec{m}}(\lambda)-f\left(\frac k{cN}\right)\mathbb{E}_{\mathbb{S}(\phi_{\frac k{cN}})}c_{\vec{m}}.
% \end{equation*}
It follows from Propositions \ref{prop:covOfPnN} and \ref{prop:localStatOfPnN} that 
\begin{multline*}
\left|f\left(\frac k{cN}\right)f\left(\frac l{cN}\right)
\mathbb{E}_{\mathbb{P}_N^n}\left(\left(c_{k+\vec{m}}-\mathbb{E}_{\mathbb{S}(\phi_{\frac k{cN}})}c_{\vec{m}}\right)\cdot\left(c_{k+\vec{m}}-\mathbb{E}_{\mathbb{S}(\phi_{\frac k{cN}})}c_{\vec{m}}\right)\right)\right|
\\\leq \frac{C}{\min\{N,(1+|k-l|)^2\}}\leq \frac{C}{1+|k-l|}
\end{multline*}
for all $k,l\in\mathcal{I}_N(\varepsilon_0)$. Summing up over all such $k$ and $l$, we obtain
% \begin{equation*}
% \mathbb{E}_{\mathbb{P}_N^n}\left|
% \frac 1{cN} \sum_{k\in\mathcal{I}_N(\varepsilon_0)}f\left(\frac k{cN}\right)c_{k+\vec{m}}-\frac 1{cN} \sum_{k\in\mathcal{I}_N(\varepsilon_0)}f\left(\frac k{cN}\right)\mathbb{E}_{\mathbb{S}(\phi_{\frac k{cN}})}c_{\vec{m}}\right|^2
% \leq \frac{C N\ln(N)}{N^2}.
% \end{equation*}
\begin{multline*}
\mathbb{E}_{\mathbb{P}_N^n}\left|
\frac 1{cN} \sum_{k\in\mathcal{I}_N(\varepsilon_0)}f\left(\frac k{cN}\right)c_{k+\vec{m}}-\frac 1{cN} \sum_{k\in\mathcal{I}_N(\varepsilon_0)}f\left(\frac k{cN}\right)\mathbb{E}_{\mathbb{S}(\phi_{\frac k{cN}})}c_{\vec{m}}\right|^2
\\\leq \frac{C N\ln(N)}{N^2}.
\end{multline*}
Replacing the Riemann sum by the appropriate integral we obtain
\begin{equation}
\label{eq:sumCsBulk}
\lim_{\substack{N\rightarrow \infty\\n=\lfloor c^2N^2\rfloor}}\mathbb{E}_{\mathbb{P}_N^n}\left|
\frac 1{cN} \sum_{k\in\mathcal{I}_N(\varepsilon_0)}f\left(\frac k{cN}\right)c_{k+\vec{m}}-\int_{c-2+\varepsilon_0}^{c+2-\varepsilon_0}f(a)\mathbb{E}_{\mathbb{S}(\phi_a)}c_{\vec{m}}\ da\right|^2=0.
\end{equation}
It follows from Proposition \ref{prop:PnNNearEdges} that
\begin{equation*}
\lim_{\substack{N\rightarrow \infty\\n=\lfloor c^2N^2\rfloor}}\mathbb{E}_{\mathbb{P}_N^n}\left|\frac 1{cN} \sum_{\substack{k\notin\mathcal{I}^+_N(1,\delta)\\k\geq -N}}f\left(\frac k{cN}\right)c_{k+\vec{m}}(\lambda)
\right.\left.-\delta_{c<1}\frac 1{cN}\sum_{\substack{k\notin\mathcal{I}^+_N(1,\delta)\\-N\leq k\leq 0}}f\left(\frac k{cN}\right)\right|=0,
\end{equation*}
which, since $f$ is bounded, implies
\begin{equation}
\label{eq:sumCsTail}
\lim_{\substack{N\rightarrow \infty\\n=\lfloor c^2N^2\rfloor}}\mathbb{E}_{\mathbb{P}_N^n}\left|\frac 1{cN} \sum_{\substack{k\in\mathcal{I}_N(0)\\k\geq -N}}f\left(\frac k{cN}\right)c_{k+\vec{m}}(\lambda)-\delta_{c<1}\int_{-\frac 1c}^{c-2}f(a)\ da\right|=0.
\end{equation}
Combining \eqref{eq:sumCsBulk} and \eqref{eq:sumCsTail}, and taking the limit $\varepsilon_0\rightarrow 0$ completes the proof.

$\qed$
\end{proof}

\begin{corollary}
\label{cor:limitForHooks}
For any $\varepsilon>0$ we have
\begin{multline*}
\lim_{\substack{N\rightarrow \infty\\n=\lfloor c^2N^2\rfloor}}\mathbb{P}_N^n\left\{\lambda:\left|\hat{\theta}(\lambda)-\sum_{k=1}^{\infty}\left(\mathfrak{m}(k)\int_{c-2}^{c+2}\mathbb{E}_{\mathbb{S}(\phi_a)}c_{\{0\}}-\mathbb{E}_{\mathbb{S}(\phi_a)}c_{\{0,k\}} da\right)\right|<\varepsilon\right\}
\\=1,
\end{multline*}
where $\hat{\theta}(\lambda)$ and $\mathfrak{m}(k)$ are as in Proposition \ref{prop:intFormMeas}.
\end{corollary}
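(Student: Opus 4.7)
The approach is: (1) express $\hat\theta(\lambda)$ in terms of the indicator observables $c_{x}(\lambda)\bigl(1-c_{x-k}(\lambda)\bigr)$ via a combinatorial identity for hook lengths, (2) apply Lemma~\ref{lem:sumsOfCs} termwise in the hook length $k$, and (3) interchange the $k$--sum with the $N\to\infty$ limit using a uniform tail estimate based on $\mathfrak m(k)=O(1/k^{2})$.

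For step (1), the classical bijection sending a cell $(i,j)\in\lambda$ to the pair $(\lambda_{i}-i,\;j-\lambda'_{j}-1)$ identifies cells of $\lambda$ with pairs $(a,b)\in S(\lambda)\times\bigl(\mathbb Z\setminus S(\lambda)\bigr)$ satisfying $a>b$, where $S(\lambda)=\mathcal P(\lambda)\cup\{-N-1,-N-2,\ldots\}$ is the extended Maya diagram; under this bijection $a-b=h_{i,j}$. (That $j-\lambda'_{j}-1\notin S(\lambda)$ follows from strict monotonicity of $f(i)=\lambda_{i}-i$ together with the pinching $f(\lambda'_{j})\ge j-\lambda'_{j}$ and $f(\lambda'_{j}+1)\le j-\lambda'_{j}-2$, so $f$ skips the value $j-\lambda'_{j}-1$.) Truncating to particles in $[-N,\infty)$ then gives
\begin{equation*}
N_{k}(\lambda):=\#\{(i,j):h_{i,j}=k\}=\sum_{x\ge -N}c_{x}(\lambda)\bigl(1-c_{x-k}(\lambda)\bigr)-R_{k}(\lambda),
\end{equation*}
with a boundary correction $R_{k}(\lambda)=\sum_{x=-N}^{-N+k-1}c_{x}(\lambda)\in[0,k]$ accounting for the indices $x-k<-N$, which are treated as holes by the indicator $c_{x-k}$ but are in fact particles of $S(\lambda)$.

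For step (2), I would apply Lemma~\ref{lem:sumsOfCs} with $f\equiv 1$ twice, once with $\vec m=\{0\}$ and once with $\vec m=\{0,-k\}$, and subtract. The frozen boundary contributions $\delta_{c<1}\int_{-1/c}^{c-2}da$ cancel exactly, since in that region $c_{x}=c_{x-k}=1$ pointwise and hence $c_{x}(1-c_{x-k})\equiv 0$; combining this with the translation invariance $\mathbb E_{\mathbb S(\phi_{a})}c_{\{0,-k\}}=\mathbb E_{\mathbb S(\phi_{a})}c_{\{0,k\}}$, the asymptotic $\sqrt n/(cN)\to 1$, and the deterministic bound $R_{k}(\lambda)/\sqrt n\le k/\sqrt n\to 0$ for each fixed $k$, one obtains the convergence in $\mathbb P_{N}^{n}$--measure
\begin{equation*}
\frac{\mathfrak m(k)\,N_{k}(\lambda)}{\sqrt n}\longrightarrow \mathfrak m(k)\int_{c-2}^{c+2}\Bigl(\mathbb E_{\mathbb S(\phi_{a})}c_{\{0\}}-\mathbb E_{\mathbb S(\phi_{a})}c_{\{0,k\}}\Bigr)\,da
\end{equation*}
for each fixed $k\ge 1$.

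The main obstacle is step (3), the uniform interchange of $\sum_{k}$ with $\lim_{N\to\infty}$. For this I would observe that the hook lengths along any row are strictly decreasing in $j$, since $h_{i,j+1}-h_{i,j}=(\lambda'_{j+1}-\lambda'_{j})-1\le -1$, so at most one cell per row has hook length $k$ and hence $N_{k}(\lambda)\le N$ deterministically. Combined with $\mathfrak m(k)=O(1/k^{2})$ (immediate from its defining series), this yields the $\lambda$--uniform tail bound
\begin{equation*}
\frac{1}{\sqrt n}\sum_{k>K}\mathfrak m(k)\,N_{k}(\lambda)\;\le\;\frac{CN}{K\sqrt n}\;=\;O(1/K),
\end{equation*}
and the analogous tail bound for the limiting series follows from $|\mathbb E_{\mathbb S(\phi_{a})}c_{\{0\}}-\mathbb E_{\mathbb S(\phi_{a})}c_{\{0,k\}}|\le 1$ and $\sum_{k}\mathfrak m(k)<\infty$. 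Truncating the $k$--sum at some large $K$, applying step (2) to the finitely many terms $1\le k\le K$, and finally letting $K\to\infty$ completes the proof.
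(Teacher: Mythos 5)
Your argument is correct and follows essentially the same route as the paper: express $h_k(\lambda)$ via the particle indicators, apply Lemma~\ref{lem:sumsOfCs} with $\vec m=\{0\}$ and $\vec m=\{0,-k\}$, then interchange $\sum_k$ with $\lim_N$ using the deterministic bound $h_k(\lambda)\leq N$ together with $\sum_k\mathfrak m(k)<\infty$. Your boundary correction $R_k(\lambda)$ is in fact a small refinement of the paper's identity $h_k(\lambda)=\sum_i c_i(\lambda)(1-c_{i-k}(\lambda))$, which as literally written overcounts by the (harmless, $O(k/\sqrt n)$) contribution of particles at positions in $[-N,-N+k-1]$ paired with ``holes'' below $-N$ that are really particles of the extended Maya diagram; you identify and control this term exactly, whereas the paper implicitly relies on its negligibility.
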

\begin{proof}
Given a Young diagram $\lambda$ and a positive integer $k$, let $h_k(\lambda)$ be the number of cells in $\lambda$ with hook length $k$. Since $h_k(\lambda)$ is equal to the number of pairs $(i,i-k)$ such that $c_i(\lambda)=1$ and $c_{i-k}(\lambda)=0$, we have
\begin{equation*}
h_k(\lambda)=\sum_{i=-\infty}^{\infty}(c_i(\lambda)-c_i(\lambda)c_{i-k}(\lambda)).
\end{equation*}
Applying Lemma \ref{lem:sumsOfCs}, for any $k\in\mathbb{N}$ and any $\varepsilon>0$ we obtain
\begin{equation}
\label{eq:limitForHookCounts}
\lim_{\substack{N\rightarrow \infty\\n=\lfloor c^2N^2\rfloor}}\mathbb{P}_N^n\left\{\lambda:\left|\frac {h_k(\lambda)}{cN}-\int_{c-2}^{c+2}\mathbb{E}_{\mathbb{S}(\phi_a)}c_{\{0\}}-\mathbb{E}_{\mathbb{S}(\phi_a)}c_{\{0,k\}}\ da\right|<\varepsilon\right\}=1.
\end{equation}
Notice that
\begin{equation*}
\hat{\theta}(\lambda)=\sum_{k=1}^{\infty}\frac{h_k(\lambda)}{\sqrt{n}}\mathfrak{m}(k).
\end{equation*}
Since each row of $\lambda$ can have at most one cell with hook length $k$ we have $h_k(\lambda)<N$, whence the expression 
\begin{equation*}
\left|\frac {h_k(\lambda)}{cN}-\int_{c-2}^{c+2}\left(\mathbb{E}_{\mathbb{S}(\phi_a)}c_{\{0\}}-\mathbb{E}_{\mathbb{S}(\phi_a)}c_{\{0,k\}}\right)\ da\right|
\end{equation*}
is bounded. Since the series $\sum_{k=1}^{\infty}\mathfrak{m}(k)$ is convergent, summing \eqref{eq:limitForHookCounts} in $k$ we obtain the statement of the corollary.

$\qed$
\end{proof}

Define 
\begin{equation*}
F_\lambda(x)=\sqrt{n}f_\lambda\left(\frac x{\sqrt{n}}\right)=\mathcal{L}_\lambda(x)-\sqrt{n}\Omega_c\left(\frac x{\sqrt{n}}\right).
\end{equation*}
We have
\begin{equation*}
\frac{\sqrt{n}}8\|f_\lambda\|_{\frac 12}^2=\frac 1{4\sqrt{n}}\int_0^\infty\int_{-\infty}^\infty\left(\frac{F_\lambda(t+h)-F_\lambda(t)}h\right)^2\ dt\ dh.
\end{equation*}

\begin{corollary}
\label{cor:limitForSobNormMain}
For any $h_0>0$ and for any $\varepsilon>0$ we have
\begin{multline}
\label{eq:SobNormMain}
\lim_{\substack{N\rightarrow \infty\\n=\lfloor c^2N^2\rfloor}}\mathbb{P}_N^n\left\{\lambda:\left|\frac 1{4\sqrt{n}}\int\limits_0^{h_0}\int\limits_{-\infty}^\infty\left(\frac{F_\lambda(t+h)-F_\lambda(t)}h\right)^2\ dt\ dh-\tilde{H}_c(h_0) \right|<\varepsilon\right\}\\=1,
\end{multline}
where
% \begin{equation*}
% \tilde{H}_c(h_0)=\frac 14\int_{c-2}^{c+2}\int_0^1\int_0^{h_0}\mathbb{E}_{\mathbb{S}(\phi_a)}\left(\frac{\mathcal{L}_\lambda(s+h)-\mathcal{L}_\lambda(s)}h\right.\left.-\frac 2\pi \arcsin\left(\frac{c+a}{2\sqrt{1+ac}}\right)\right)^2\ dh\ ds\ da.
% \end{equation*}
\begin{multline*}
\tilde{H}_c(h_0)=\frac 14\int_{c-2}^{c+2}\int_0^1\int_0^{h_0}\mathbb{E}_{\mathbb{S}(\phi_a)}\left(\frac{\mathcal{L}_\lambda(s+h)-\mathcal{L}_\lambda(s)}h\right.\\\left.-\frac 2\pi \arcsin\left(\frac{c+a}{2\sqrt{1+ac}}\right)\right)^2\ dh\ ds\ da.
\end{multline*}
\end{corollary}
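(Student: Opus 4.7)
The plan is to rewrite the Sobolev-type integral in \eqref{eq:SobNormMain} as an average over a ``phase'' $s\in[0,1)$ and a ``scale'' $h\in[0,h_0]$ of a bounded quadratic expression in the indicators $c_k(\lambda)$, and then evaluate each monomial in that expansion via Lemma~\ref{lem:sumsOfCs}. The key local identity is that $\mathcal{L}_\lambda$ is piecewise linear with slope $1-2c_{\lfloor x\rfloor}(\lambda)$ on $\{x\geq-N\}$, so
\begin{equation*}
\frac{\mathcal{L}_\lambda(t+h)-\mathcal{L}_\lambda(t)}{h}=1-\frac{2}{h}\sum_{j\in\mathbb{Z}}c_{j}(\lambda)\,\mu_{t,h}(j),\qquad t\geq-N,
\end{equation*}
with $\mu_{t,h}(j)=|[t,t+h]\cap[j,j+1]|$, while a Taylor expansion yields $\sqrt n\,[\Omega_c((t+h)/\sqrt n)-\Omega_c(t/\sqrt n)]/h=\Omega_c'(t/\sqrt n)+O(h/\sqrt n)$ uniformly for $t/\sqrt n$ bounded away from $c\pm 2$. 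Squared and divided by $\sqrt n$, the Taylor error contributes only $O(1/\sqrt n)$ to the full integral, and the region $\{t<-N\}$ together with the boundary layer of width $O(h_0)$ at $t=-N$ contributes zero since $F_\lambda\equiv0$ there.

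Next, writing $t=k+s$ with $k\in\mathbb{Z}_{\geq-N}$, $s\in[0,1)$, one obtains
\begin{equation*}
\frac{1}{4\sqrt n}\int_0^{h_0}\!\!\int_{-\infty}^{\infty}\!\Bigl(\tfrac{F_\lambda(t+h)-F_\lambda(t)}{h}\Bigr)^{\!2}\!dt\,dh=\tfrac{1}{4}\!\int_0^{h_0}\!\!\int_0^{1}\!\tfrac{1}{\sqrt n}\!\sum_{k\geq-N}\! G_{s,h,k}(\lambda)\,ds\,dh+o(1),
\end{equation*}
where $G_{s,h,k}(\lambda)=\bigl[A(k)-(2/h)\sum_j c_{k+j}(\lambda)\,\mu_{s,h}(j)\bigr]^2$ with $A(k)=1-\Omega_c'((k+s)/\sqrt n)$ and with $|\{j:\mu_{s,h}(j)\neq 0\}|=O(h_0)$. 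Expanded monomial-by-monomial, $G_{s,h,k}$ breaks into three kinds of sums: the constant-in-$\lambda$ Riemann sum $\tfrac{1}{\sqrt n}\sum_k A(k)^2$; linear indicator sums $\tfrac{1}{\sqrt n}\sum_k f(k/\sqrt n)\,c_{k+j}$ with $f$ bounded and continuous; and quadratic indicator sums $\tfrac{1}{\sqrt n}\sum_k c_{k+(j,j')}$. The first is an ordinary Riemann sum and the other two fall directly under Lemma~\ref{lem:sumsOfCs} (using $\sqrt n/(cN)\to1$). A direct check---carried out monomial-by-monomial using that $A(a)=2$ and the configuration is fully packed throughout the left frozen region---shows that the three frozen-region contributions (including the $\delta_{c<1}\int_{-1/c}^{c-2}f\,da$ terms produced by Lemma~\ref{lem:sumsOfCs}) cancel exactly, consistent with $F_\lambda\equiv0$ off the bulk by Corollary~\ref{cor:PnNNearEdges}. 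What remains regroups exactly into $\int_{c-2}^{c+2}\mathbb{E}_{\mathbb{S}(\phi_a)}\bigl(\tfrac{\mathcal{L}_\lambda(s+h)-\mathcal{L}_\lambda(s)}{h}-\Omega_c'(a)\bigr)^{2}da$.

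The final step is to interchange $\lim_{N\to\infty}$ with the $(s,h)$-integrations. The integrand $G_{s,h,k}$ is uniformly bounded in $(s,h,k,\lambda)$, and Proposition~\ref{prop:covOfPnN} bounds the variance of $\tfrac{1}{\sqrt n}\sum_k G_{s,h,k}$ by $O(N^{-1}\log N)$ uniformly in $(s,h)$; dominated convergence together with a second-moment argument of the kind used in the proof of Lemma~\ref{lem:sumsOfCs} then yields convergence in $\mathbb{P}_N^n$-probability of the whole expression to $\tilde H_c(h_0)$. I expect the principal obstacle to be exactly this uniformity in $(s,h)$: one needs error bounds in Lemma~\ref{lem:sumsOfCs} that are uniform for bounded $\|f\|_\infty$ and bounded $|\vec m|$, which amounts to tracking constants through its proof and through Propositions~\ref{prop:localStatOfPnN} and~\ref{prop:covOfPnN}. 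A secondary, minor issue is the boundary layer near $a=c\pm2$ where $\Omega_c''$ has an integrable square-root singularity; since $h\leq h_0$ is bounded, the affected $t$-region has length $O(1)$ and contributes $O(1/\sqrt n)$, which is negligible.
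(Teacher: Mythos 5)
Your proposal takes essentially the same route as the paper: approximate the difference quotient of $\Omega_c$ by $\Omega_c'$ via Taylor expansion; decompose $t=k+s$ with $s\in[0,1)$, $k\in\mathbb{Z}$; express $\mathcal{L}_\lambda(t+h)-\mathcal{L}_\lambda(t)$ in terms of the indicators $c_k(\lambda)$; and feed the resulting bounded short-range pattern sums into Lemma~\ref{lem:sumsOfCs}, with the $\delta_{c<1}\int_{-1/c}^{c-2}f\,da$ contributions cancelling against the frozen-region constants. The slight differences are cosmetic refinements: you use the weight $\mu_{t,h}(j)=|[t,t+h]\cap[j,j+1]|$, which handles non-integer $h$ cleanly (the paper writes $\sum_{i=1}^{h-1}$ as though $h\in\mathbb{Z}$), and you explicitly flag the edge-layer near $a=c\pm2$ where $\Omega_c''$ is singular and the uniformity in $(s,h)$ of the error bounds from Lemma~\ref{lem:sumsOfCs} and Proposition~\ref{prop:covOfPnN}; both of these are issues the paper's proof implicitly assumes away but which are routine to check. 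No genuine gap; this is a correct and marginally more careful rendering of the same argument.
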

\begin{proof}
For any $t$ and any $h$ such that $0<h\leq h_0$, we have
\begin{equation*}
\left|\frac{cN}{h}\left(\Omega_c\left(\frac{t+h}{cN}\right)-\Omega_c\left(\frac{t}{cN}\right)\right)-\Omega'_c\left(\frac t{cN}\right)\right|\leq \frac{C(h_0)}{cN}.
\end{equation*}
From \eqref{eq:Omega'} it follows that the integral in \eqref{eq:SobNormMain} is equal to the expression
\begin{equation}
\label{eq:sumDiffL-OmP}
\frac 1{4cN}\int_0^1\int_0^{h_0}\sum_{k=-N}^\infty\left(\frac{\mathcal{L}_\lambda(s+k+h)-\mathcal{L}_\lambda(s+k)}h-\Omega'_c\left(\frac{s+k}{cN}\right)\right)^2\ dh\ ds
\end{equation}
up to $o(1)$.
From the definition of $c_k(\lambda)$ (see Section \ref{sec:PointProc}) it follows that we can write
\begin{equation*}
\frac{\mathcal{L}_\lambda(s+k+h)-\mathcal{L}_\lambda(s+k)}h=1-\frac{2(1-s)}{h}c_k(\lambda)-\sum_{i=1}^{h-1}\frac{2}hc_{k+i}(\lambda)-\frac{2s}hc_{k+h}(\lambda),
\end{equation*}
which implies that the expression in \eqref{eq:sumDiffL-OmP} can be written in the form 
\begin{equation*}
\frac 1{cN}\sum_{\vec{m}\in I_{h_0}}\sum_k f\left(\frac k{cN}\right)c_{k+\vec{m}}
\end{equation*}
for some finite set $I_{h_0}$.
Thus, we can apply Lemma \ref{lem:sumsOfCs} to \eqref{eq:sumDiffL-OmP}, and obtain the corollary (it is easy to check that the contributions coming from the term $\delta_{c<1}\int f(a)\ da$ in Lemma \ref{lem:sumsOfCs} cancel out). 

$\qed$
\end{proof}

\begin{lemma}[The tail estimate]
\label{lem:limitForSobNormTail}
For any $\varepsilon>0$ there exists $h_0>0$ such that 
\begin{equation*}
\lim_{\substack{N\rightarrow \infty\\n=\lfloor c^2N^2\rfloor}}\mathbb{P}_N^n\left\{\lambda:\frac 1{4\sqrt{n}}\int_{h_0}^{\infty}\int_{-\infty}^\infty\left(\frac{F_\lambda(t+h)-F_\lambda(t)}h\right)^2\ dt\ dh<\varepsilon\right\}=1.
\end{equation*}
\end{lemma}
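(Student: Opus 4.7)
The plan is to show that $\mathbb{E}_{\mathbb{P}_N^n}[T(\lambda, h_0)] \le C(1+\log h_0)/h_0 + o_N(1)$, where $T(\lambda, h_0)$ denotes the tail integral in the statement; Markov's inequality then yields the lemma. Since $\mathcal{L}_\lambda$ is piecewise linear of slopes $\pm 1$ encoded by the particle configuration $\mathcal{P}(\lambda)$, one has $\mathcal{L}_\lambda(t+h) - \mathcal{L}_\lambda(t) = h - 2 N_{[t,t+h)}(\lambda)$ where $N_{[t,t+h)}(\lambda) = \#(\mathcal{P}(\lambda) \cap [t,t+h))$ (with the natural boundary convention past $-N$). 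The analogous formula for the limit shape, via $\Omega_c' = 1 - 2\rho$, gives $F_\lambda(t+h) - F_\lambda(t) = -2(N_{[t,t+h)}(\lambda) - \bar{N}_{[t,t+h)})$ with $\bar{N}_{[t,t+h)} = \int_t^{t+h} \rho(s/(cN))\,ds$, whence
\[
\mathbb{E}\bigl[(F_\lambda(t+h)-F_\lambda(t))^2\bigr] = 4\,\mathrm{Var}(N_{[t,t+h)}) + 4\,B(t,h)^2,
\]
where $B(t,h) = \mathbb{E}[N_{[t,t+h)}] - \bar{N}_{[t,t+h)}$.

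\textbf{Logarithmic variance bound.} Working with the Poissonized measure $\mathbb{P}_N^{\gamma^+,0}$, the process is determinantal with projection kernel, so \eqref{eq:KProj} yields
\[
\mathrm{Var}(N_{[t,t+h)}) = \sum_{k \in [t,t+h)}\sum_{l \notin [t,t+h)} K_{N,\gamma^+}(k,l)K_{N,\gamma^+}(l,k).
\]
By Lemma \ref{lem:decayCorrK}, once the exponential gauge factors cancel between $K(k,l)$ and $K(l,k)$, one has $|K_{N,\gamma^+}(k,l)K_{N,\gamma^+}(l,k)| \le C/(1+|k-l|)^2$ for $k,l \in \mathcal{I}_N(\varepsilon)$. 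Summing over the depth $u = k - t \in [0,h)$ gives
\[
\mathrm{Var}(N_{[t,t+h)}) \le C\sum_{u=0}^{h-1}\Bigl(\tfrac{1}{u+1} + \tfrac{1}{h-u}\Bigr) \le C'\log(1+h)
\]
in the bulk. Edge contributions, handled by Lemmas \ref{lem:estKBulkEdge}--\ref{lem:estKExtremes} combined with Corollary \ref{cor:PnNNearEdges} (the transition region has extent $O(N^\delta)$), are lower-order. Lemma \ref{lem:depoissonization} transfers the bound to $\mathbb{P}_N^n$.

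\textbf{Bias and integration.} Proposition \ref{prop:localStatOfPnN} together with Lemma \ref{lem:KExtremes} gives $|\mathbb{E}[c_k] - \rho(k/(cN))| \le C/N$ in the bulk and exponentially small outside the limit shape, yielding $|B(t,h)| \le Ch/N$. By Corollary \ref{cor:PnNNearEdges} the support of $F_\lambda$ has length $O(\sqrt n)$ with overwhelming probability. Split the $h$-integral at the support width $H \asymp \sqrt n$. For $h \in [h_0, H]$,
\[
\frac{1}{\sqrt n}\int_{h_0}^{H}\int_{t \in \mathrm{supp}(F_\lambda)} \frac{C\log(1+h) + Ch^2/N^2}{h^2}\,dt\,dh \le \frac{C(1+\log h_0)}{h_0} + O(1/\sqrt n).
\]
For $h \ge H$, the supports of $F_\lambda(\cdot)$ and $F_\lambda(\cdot+h)$ are disjoint, so $\int (F_\lambda(t+h)-F_\lambda(t))^2\,dt = 2\|F_\lambda\|_2^2$, and
\[
\frac{1}{\sqrt n}\int_H^{\infty} \frac{2\,\mathbb{E}\|F_\lambda\|_2^2}{h^2}\,dh \le \frac{\mathbb{E}\|F_\lambda\|_2^2}{\sqrt n\,H} = O(\log N /\sqrt n),
\]
using $\mathbb{E}[F_\lambda(t)^2] = O(\log N)$, which follows from the same projection-property argument applied to one-sided counts $N_{(-\infty, t]}$.

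\textbf{Conclusion and main obstacle.} Given $\varepsilon > 0$, choose $h_0$ so that $C(1+\log h_0)/h_0 < \varepsilon^2/2$; the above estimates yield $\mathbb{E}_{\mathbb{P}_N^n}[T(\lambda, h_0)] < \varepsilon^2$ for all large $N$, and Markov's inequality gives $\mathbb{P}_N^n(T(\lambda, h_0) \ge \varepsilon) < \varepsilon$, proving the lemma. The principal difficulty is the logarithmic variance bound: a direct use of the absolute-value covariance estimate in Proposition \ref{prop:covOfPnN} alone produces only $\mathrm{Var}(N_{[t,t+h)}) = O(h)$, under which the $h$-integral diverges and the entire approach collapses. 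Exploiting the projection identity \eqref{eq:KProj} together with the sharp $(1+|k-l|)^{-2}$ decay of $K(k,l)K(l,k)$ (after gauge cancellation) is therefore indispensable; handling the edge region via the weaker kernel estimates of Lemmas \ref{lem:estKBulkEdge}--\ref{lem:estKExtremes} is a secondary technical point whose cost is controlled by the $O(N^\delta)$ size of the transition window.
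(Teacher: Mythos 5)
Your strategy is essentially the same as the paper's. Both proofs hinge on the same key insight you correctly single out: the projection identity \eqref{eq:KProj} turns $\mathrm{Var}(N_{[t,t+h)})$ into the off-diagonal sum $\sum_{k\in I}\sum_{l\notin I}K_{N,\gamma^+}(k,l)K_{N,\gamma^+}(l,k)$, and the $(1+|k-l|)^{-2}$ decay of Lemma \ref{lem:decayCorrK} (after the exponential gauge factors cancel) then gives the logarithmic variance bound, without which the $h$-integral diverges. The subsequent split into variance and bias terms and the integration in $h$ parallel the computations in Lemmas \ref{lem:variancePoisB}--\ref{lem:variancePoisE} after the paper applies the Bufetov discretization lemma.

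Where you diverge is in handling the ``long-range in $t$'' part. The paper replaces $F_\lambda$ by the truncated $F_\lambda^{L,\delta}$ supported in $\mathcal{I}^-_N(L,\delta)$ (justified by Corollary \ref{cor:PnNNearEdges}), so all expectations can be computed under the \emph{unconditional} determinantal measure, with the truncation cost packaged into the separate boundary sum $\sum_k F_\lambda(k)^2/\dist(k,\mathbb{Z}\setminus\mathcal{I}^-_N(L,\delta))$ estimated in Lemma \ref{lem:variancePoisE}. You instead split the $h$-integral at $H\asymp\sqrt n$ and invoke $\mathbb{E}\|F_\lambda\|_2^2=O(\sqrt n\log N)$ via a pointwise bound $\mathbb{E}[F_\lambda(t)^2]=O(\log N)$. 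That pointwise bound is plausible but not automatic: it requires controlling the one-sided count variance and bias through the $O(N^\delta)$-size edge windows using Lemmas \ref{lem:estKBulkEdge}--\ref{lem:estFuncK+Extremes}, and also controlling the contribution from diagrams outside $\mathbb{Y}^n_N(L,\delta)$, where $F_\lambda$ can extend to scale $N^2$. You flag the edge handling as secondary, but it is precisely where the uniform $O(\log N)$ claim needs work. Likewise, each of your expectation bounds is obtained under $\mathbb{P}^{\gamma^+,0}_N$ with $e^{C|\gamma^+-c^2N|}$ factors, and the depoissonization via Lemma \ref{lem:depoissonization} must be threaded through each estimate (as the paper does explicitly at the end of its proof), not just invoked once. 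None of these appear to be fatal, but they are the places where the proof proposal is a sketch rather than a proof and where the paper's $F^{L,\delta}$ truncation buys real simplification.
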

The proof of Lemma \ref{lem:limitForSobNormTail} is given in Section \ref{sec:Tail}. We now prove Theorem \ref{thm:main}.

\begin{proof}[of Theorem \ref{thm:main}]
It follows from Corollary \ref{cor:PnNNearEdges} that
\begin{equation*}
\lim_{\substack{N\rightarrow \infty\\n=\lfloor c^2N^2\rfloor}}\mathbb{P}_N^n\left\{\lambda:\left|\frac{\sqrt{n}}{2}\int_{|x-c|>2}G_c (x)f_\lambda(x)\ dx\right|<\varepsilon\right\}=1
\end{equation*}
for any $\varepsilon>0$. The theorem follows immediately from Proposition \ref{prop:intFormMeas}, Corollaries \ref{cor:limitForHooks} and \ref{cor:limitForSobNormMain}, and Lemmas \ref{lem:limitForContents} and \ref{lem:limitForSobNormTail}. For the constant $H_c$ we obtain the following formula:
\begin{multline}
\label{eq:Hc}
H_c=\sum_{k=1}^{\infty}\left(\mathfrak{m}(k)\int_{c-2}^{c+2}\mathbb{E}_{\mathbb{S}(\phi_a)}c_{\{0\}}-\mathbb{E}_{\mathbb{S}(\phi_a)}c_{\{0,k\}}\ da\right)
\\+\frac 14\int\limits_{c-2}^{c+2}\int\limits_0^1\int\limits_0^{\infty}\mathbb{E}_{\mathbb{S}(\phi_a)}\left(\frac{\mathcal{L}_\lambda(s+h)-\mathcal{L}_\lambda(s)}h-\frac 2\pi \arcsin\left(\frac{c+a}{2\sqrt{1+ac}}\right)\right)^2dh\ ds\ da.
\end{multline}

$\qed$
\end{proof}

\section{The tail estimate}
\label{sec:Tail}
The goal of this section is to prove Lemma \ref{lem:limitForSobNormTail}. To simplify notation, in this section we set $n=c^2N^2$.

For $\delta>0$ and $K>0$ denote
% \begin{equation*}
% \mathbb{Y}^n_N(K,\delta)=
% \left\{\lambda\in\mathbb{Y}^n_N:\supp F_\lambda(x)\subset \mathcal{I}^+_N(K,\delta) \right\},
% \end{equation*}
% \begin{multline*}
% \mathbb{Y}^n_N(K,\delta)=
% \left\{\lambda\in\mathbb{Y}^n_N:
% \right.\\\left.\supp F_\lambda(x)\subset \left((c-2)cN-Kc N^\delta, (c+2)cN+Kc N^\delta\right)\right\},
% \end{multline*}
% \begin{multline*}
% \mathbb{Y}^n_N(K,\delta)=
% \{\lambda\in\mathbb{Y}^n_N:\lambda_1\leq (c+2)cN+Kc N^\delta\}
% \\\cap\left\{\lambda\in\mathbb{Y}^n_N:\begin{cases}
% l(\lambda)\leq -(c-2)cN+Kc N^\delta,&c<1
% \\\lambda_N\geq N+(c-2)cN-Kc N^\delta,&c>1
% \end{cases}\right\}.
% \end{multline*}
\begin{equation*}
F_\lambda^{K,\delta}(k)=
\begin{cases}
F_\lambda(k),&k\in\mathcal{I}^-_N(K,\delta)
\\0,&\text{ otherwise }
\end{cases}
\end{equation*}
and
\begin{equation*}
F_\lambda(k,l)=\left(\frac{F_\lambda(k+l)-F_\lambda(k)}l\right)^2.
\end{equation*}

Notice that $F_\lambda(x)$ is a Lipschitz function with Lipschitz constant $2$. It was proven in \cite{Bu} that for a Lipschitz function with Lipschitz constant $2$ the truncated integral in the $\frac 12$--Sobolev norm can be approximated by a sum of the integrand. More precisely, Lemma 6.1 in \cite{Bu} implies:
\begin{lemma}
For any $\delta\in(0,\frac 12)$, any $K>0$, $L>0$ and any $\varepsilon>0$, there exists a number $h_0>1$ depending only on $\delta,K,L,\varepsilon$ and such that for all $h>h_0$, all $N\in\mathbb{N}$, $n= c^2N^2$, and all $\lambda\in\mathbb{Y}^n_N(K,\delta)$ we have the inequality
% \begin{equation*}
% %\label{eq:tailInt-Sum}
% \frac 1{4\sqrt{n}}\int_{h}^\infty\int_{-\infty}^\infty\left(\frac{F_\lambda(t+h)-F_\lambda(t)}h\right)^2\ dt\ dh
% \leq \frac 1{\sqrt{n}}\sum_{l=h}^\infty\sum_{k=-\infty}^\infty \left(\frac{F_\lambda^{L,\delta}(k+l)-F_\lambda^{L,\delta}(k)}l\right)^2+\varepsilon.
% \end{equation*}
\begin{multline*}
%\label{eq:tailInt-Sum}
\frac 1{4\sqrt{n}}\int_{h}^\infty\int_{-\infty}^\infty\left(\frac{F_\lambda(t+h)-F_\lambda(t)}h\right)^2\ dt\ dh
\\\leq \frac 1{\sqrt{n}}\sum_{l=h}^\infty\sum_{k=-\infty}^\infty \left(\frac{F_\lambda^{L,\delta}(k+l)-F_\lambda^{L,\delta}(k)}l\right)^2+\varepsilon.
\end{multline*}
\end{lemma}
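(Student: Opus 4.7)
The plan is to deduce the claim from Bufetov's Lemma 6.1 in \cite{Bu}, which provides a Riemann-sum approximation of the $\frac{1}{2}$-Sobolev tail for Lipschitz-2 functions, together with a short argument that replaces the discrete sum for $F_\lambda$ by the corresponding sum for the truncation $F_\lambda^{L,\delta}$.

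First, observe that $F_\lambda$ is Lipschitz with constant $2$: $\mathcal{L}_\lambda$ has piecewise slopes $\pm 1$, and $|\Omega_c'|\leq 1$ by \eqref{eq:Omega'}. For $\lambda \in \mathbb{Y}^n_N(K,\delta)$ with $0 < c < 1$, the identity $\mathcal{L}_\lambda(x) = |x|$ outside $\mathcal{I}^+_N(K,\delta)$, combined with $\Omega_c(y) = |y|$ for $y \notin [c-2, c+2]$, forces $F_\lambda$ to vanish outside $\mathcal{I}^+_N(K,\delta)$, so $F_\lambda$ is a compactly supported Lipschitz-$2$ function. Applying Bufetov's Lemma 6.1 to $F_\lambda$ then gives
\begin{multline*}
\frac{1}{4\sqrt n}\int_h^\infty\int_{-\infty}^\infty\left(\frac{F_\lambda(t+h)-F_\lambda(t)}{h}\right)^2 dt\, dh
\\ \leq \frac{1}{\sqrt n}\sum_{l=h}^\infty\sum_{k=-\infty}^\infty\left(\frac{F_\lambda(k+l)-F_\lambda(k)}{l}\right)^2 + \frac{\varepsilon}{2}
\end{multline*}
for all $h > h_0(\varepsilon, \delta, K, L)$.

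Next, I would compare the discrete sums for $F_\lambda$ and $F_\lambda^{L,\delta}$. The two functions agree on $\mathcal{I}^-_N(L,\delta)$ and both vanish outside $\mathcal{I}^+_N(K,\delta)$, so the discrepancy arises only from pairs $(k, k+l)$ with at least one coordinate in the boundary layer $\mathcal{I}^+_N(K,\delta) \setminus \mathcal{I}^-_N(L,\delta)$ of width $O(N^\delta)$. The Lipschitz-$2$ property, combined with $F_\lambda \equiv 0$ on $\partial \mathcal{I}^+_N(K,\delta)$, yields the refined bound $|F_\lambda(k)| \leq 2\dist(k, \partial \mathcal{I}^+_N(K,\delta))$ throughout the layer, together with $|F_\lambda(k+l)-F_\lambda(k)| \leq 2\min(l, \dist(\cdot, \partial))$ for pairs inside the layer. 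A dyadic-shell decomposition of the layer at distances $2^j \leq CN^\delta$ from its outer boundary, summed against $\sum_{l\geq h} l^{-2}$ for pairs with $l$ exceeding the shell width and against constant bounds for $l$ below it, gives a total discrepancy of order $O(N^{2\delta-1})$, which is uniformly $o(1)$ in $N$ since $\delta < 1/2$ and is absorbed into $\varepsilon/2$ by further enlarging $h_0$.

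The main obstacle I anticipate is ensuring uniformity across the full range $\delta \in (0, 1/2)$: the naive Lipschitz bound $|F_\lambda| \leq O(N^\delta)$ produces a discrepancy of $O(N^{3\delta-1}/h)$ that vanishes uniformly in $N$ only when $\delta < 1/3$, so the dyadic-shell refinement with the sharper estimate $|F_\lambda(k+l)-F_\lambda(k)| \leq 2\min(l, 2^j)$ in the $j$-th shell is essential. A secondary subtlety is the case $c > 1$: there $\Omega_c$ has a linear piece on $[-1/c, c-2]$ on which $|x| - \sqrt n \Omega_c(x/\sqrt n)$ contributes a nontrivial deterministic piece to $F_\lambda$, so $F_\lambda$ is no longer supported in $\mathcal{I}^+_N(K,\delta)$; this component must be separated out, either via a modified truncation that absorbs the deterministic piece or via an analog of Corollary \ref{cor:PnNNearEdges} for $c > 1$ that restricts attention to the support region on which $\mathcal{L}_\lambda \approx \sqrt n \Omega_c(\cdot/\sqrt n)$ with high probability.
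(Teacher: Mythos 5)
Your high-level plan -- invoke Bufetov's Lemma 6.1 for the Lipschitz-$2$ function $F_\lambda$ and then pass to the truncation $F_\lambda^{L,\delta}$ -- is the right reading of the paper, which gives no proof beyond the words ``Lemma 6.1 in \cite{Bu} implies.'' The compact-support observation (for $0<c<1$, $F_\lambda$ vanishes off $\mathcal{I}^+_N(K,\delta)$ because both $\mathcal{L}_\lambda$ and $\sqrt{n}\,\Omega_c(\cdot/\sqrt{n})$ equal $|x|$ there) is also correct.

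The concrete gap is in how you dispose of the truncation error. You assert the discrepancy between the $F_\lambda$-sum and the $F_\lambda^{L,\delta}$-sum is $O(N^{2\delta-1})$ and that this is ``absorbed into $\varepsilon/2$ by further enlarging $h_0$.'' But a bound of the form $C(\delta,K,L)\,N^{2\delta-1}$ carries no $h$-dependence, so enlarging $h_0$ does nothing to it; ``uniformly $o(1)$ in $N$'' is not the same as uniformly small for all $N\in\mathbb{N}$ once $h_0$ is fixed. What saves the argument is the interplay between $h$ and the layer width $W=(K+L)cN^\delta$, which you never invoke. If $h>W$, no pair with both endpoints in the same side of the layer occurs, and the one-endpoint-in-layer contributions are $O(W^3/(h\sqrt{n}))=O(N^{3\delta-1}/h)$ plus logarithms; since $h>W$ forces $N<(h/((K+L)c))^{1/\delta}$, this is $O(h^{2-1/\delta})$ up to logarithms. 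If $h\le W$, the dyadic refinement gives $O(N^{2\delta-1}\log(W/h))$, and now $h\le W$ forces $N\ge(h/((K+L)c))^{1/\delta}$, so $N^{2\delta-1}\le C\,h^{(2\delta-1)/\delta}=C\,h^{2-1/\delta}$. In both regimes the exponent $2-1/\delta$ is negative precisely because $\delta<\tfrac12$, and the discrepancy tends to $0$ uniformly in $N$ as $h_0\to\infty$. That step -- translating the constraint $h\lessgtr W\sim N^\delta$ into a bound $N\lessgtr(h/C)^{1/\delta}$ and then into a power of $h$ -- is what you need and do not supply; without it the argument has a hole in the very range $\delta\in(\tfrac13,\tfrac12)$ where the paper uses the lemma. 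You also need to handle the cross-terms where one endpoint is deep inside $\mathcal{I}^-_N(L,\delta)$ and $F_\lambda$ there is $O(N)$, not $O(N^\delta)$; the estimate $|D|\le F_\lambda(k)^2+2|F_\lambda(k)|\,|F_\lambda(k+l)|\le 12d^2+8dl$ with $d=\dist(k,\partial\mathcal I^+_N)$, rather than a crude layer bound on both factors, is what eliminates the would-be $O(N^{2\delta}/h)$ term.

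Your flag about $c>1$ is legitimate but really points at the paper's own definition: as written, $\mathbb{Y}^n_N(K,\delta)$ demands $\mathcal{L}_\lambda=|x|$ outside $\mathcal{I}^+_N(K,\delta)$, which for $c>1$ excludes essentially all diagrams (indeed it is empty for $c\ge 2$), and Corollary \ref{cor:PnNNearEdges} is only stated for $0<c<1$. The intended definition for $c>1$ should constrain the support of $\mathcal{L}_\lambda-\sqrt{n}\,\Omega_c(\cdot/\sqrt{n})$ rather than of $\mathcal{L}_\lambda-|x|$, and then the earlier argument goes through identically with $F_\lambda$ again vanishing at $\partial\mathcal{I}^+_N(K,\delta)$. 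So this is a presentation gap in the source rather than an error you introduce, but your proposal should at least state the modified definition it is implicitly using.
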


% We will show that the expected value of the left-hand side of \eqref{eq:tailInt-Sum} converges to zero with respect to the measure $\mathbb{P}_N^n$. For that purpose we will first show that the expected value of that quantity with respect to the poissonization of $\mathbb{P}_N^n$, i.e. with respect to $\mathbb{P}_N^{\gamma^+,0}$ converges to zero, and then depoissonize.

We now prove Lemma \ref{lem:limitForSobNormTail}. 

\begin{proof}[of Lemma \ref{lem:limitForSobNormTail}.]
Fix $L>0$ and $\delta\in(\frac 13,\frac 12)$. It follows from Corollary \ref{cor:PnNNearEdges} that we can restrict to the Young diagrams in the set $\mathbb{Y}^n_N(L,\delta)$. Separating the terms where $F_\lambda^{L,\delta}(k+l)=0$ or $F_\lambda^{L,\delta}(k)=0$, we obtain
\begin{multline}
\label{eq:tailSumBulk+Far}
\frac 1{cN}\sum_{l=h}^\infty\sum_{k=-\infty}^\infty \left(\frac{F_\lambda^{L,\delta}(k+l)-F_\lambda^{L,\delta}(k)}l\right)^2\\\leq
\frac 1{cN}\sum_{\substack{k,k+l\in\mathcal{I}^-_N(L,\delta)\\l\geq h}}F_\lambda(k,l)
+\frac 2{cN}\sum_{k\in\mathcal{I}^-_N(L,\delta)}\frac{F_\lambda(k)^2}{\dist(k,\mathbb{Z}\backslash\mathcal{I}^-_N(L,\delta))}.
\end{multline}
% \begin{equation}
% \label{eq:tailSumBulk+Far}
% \frac 1{cN}\sum_{\substack{k,k+l\in\mathcal{I}^-_N(L,\delta)\\l\geq h}}F_\lambda(k,l)
% +\frac 2{cN}\sum_{k\in\mathcal{I}^-_N(L,\delta)}\frac{F_\lambda(k)^2}{\max\{h,\dist(k,\mathbb{Z}\backslash\mathcal{I}^-_N(L,\delta))\}}.
% \end{equation}

%We now estimate the first sum on the left-hand side of \eqref{eq:tailSumBulk+Far}. 
It is easy to see that if $k\in\mathcal{I}^-_N(L,\delta)$, then
\begin{equation*}
F_\lambda^{L,\delta}(k+1)-F_\lambda^{L,\delta}(k)=1-2c_k(\lambda)-cN\left(\Omega_c\left(\frac {k+1}{cN}\right)-\Omega_c\left(\frac {k}{cN}\right)\right).
\end{equation*}
Using Theorem \ref{thm:intFormK} and \eqref{eq:phi} we obtain
\begin{multline}
\label{eq:diffF}
F_\lambda^{L,\delta}(k+1)-F_\lambda^{L,\delta}(k)=2\left(\mathbb{E}_{\mathbb{P}^{\gamma^+,0}_N}c_k-c_k(\lambda)\right)+2\left(\frac {\phi_{\frac k{cN}}}{\pi}-K_{N,\gamma^+}(k,k)\right)
\\+\left(\frac 2\pi \arcsin\left(\frac{c+\frac k{cN}}{2\sqrt{1+\frac k{N}}}\right)-cN\left(\Omega_c\left(\frac {k+1}{cN}\right)-\Omega_c\left(\frac {k}{cN}\right)\right)\right).
\end{multline}
Since $\Omega_c'(x)$ is given by \eqref{eq:Omega'} and
\begin{equation*}
\Omega_c''(x)=\frac{2-c^2+cx}{2(1+cx)\sqrt{4-(x-c)^2}},
\end{equation*}
from the second degree Taylor polynomial approximation of $\Omega_c$ it follows that there exists a constant $C>0$ such that
% \begin{equation}
% \label{eq:diffOmegaOmega'}
% \left|\frac 2\pi \arcsin\left(\frac{c+\frac k{cN}}{2\sqrt{1+\frac k{N}}}\right)-cN\left(\Omega_c\left(\frac {k+1}{cN}\right)-\Omega_c\left(\frac {k}{cN}\right)\right)\right|
% \leq \frac{C}{\sqrt{4c^2N^2-(k-c^2N)^2}}
% \end{equation}
\begin{multline}
\label{eq:diffOmegaOmega'}
\left|\frac 2\pi \arcsin\left(\frac{c+\frac k{cN}}{2\sqrt{1+\frac k{N}}}\right)-cN\left(\Omega_c\left(\frac {k+1}{cN}\right)-\Omega_c\left(\frac {k}{cN}\right)\right)\right|
\\\leq \frac{C}{\sqrt{4c^2N^2-(k-c^2N)^2}}
\end{multline}
for all $k\in\mathcal{I}^-_N(L,\delta)$.

It follows from Lemma \ref{lem:depoisK2} that there exist constants $C_1,C_2>0$ such that for all $k\in\mathcal{I}^-_N(L,\delta)$ and for all $\gamma^+$ we have
\begin{equation}
\label{eq:diffKPhi}
\left|\frac {\phi_{\frac k{cN}}}{\pi}-K_{N,\gamma^+}(k,k)\right|\leq\frac{C_1 e^{C_2|\gamma^+-c^2N|}}{2cN-|k-c^2N|}.
\end{equation}
Since $\mathbb{E}_{\mathbb{P}^{\gamma^+,0}_N}(c_k)=K_{N,\gamma^+}(k,k)$, combining \eqref{eq:diffF}, \eqref{eq:diffOmegaOmega'} and \eqref{eq:diffKPhi} we obtain
\begin{equation}
\label{eq:diffFlVar+}
F_\lambda(k,l)
\\\leq \frac 2{l^2}\Var_{\mathbb{P}^{\gamma^+,0}_N}\left(c_k+\dots+c_{k+l-1}\right)+\frac 1{l^2}\left(\sum_{j=k}^{k+l-1}\frac{C_1e^{C_2|\gamma^+-c^2N|}}{2cN-|j-c^2N|}\right)^2,
\end{equation}
for some constants $C_1,C_2>0$ and for all $k$ and $l$ such that $k,k+l\in\mathcal{I}^-_N(L,\delta)$.

Summing the second term on the left--hand side of \eqref{eq:diffFlVar+} we obtain
\begin{multline*}
\frac 1{cN}\sum_{\substack{k,k+l\in\mathcal{I}^-_N(L,\delta)\\l\geq h}}\frac 1{l^2}\left(\sum_{j=k}^{k+l-1}\frac{1}{2cN-|j-c^2N|}\right)^2
\leq\frac 1{cN}\sum_{l=h}^{4cN}\frac 1{l^2}\sum_{k=1}^{4cN}\left(\sum_{j=k}^{k+l}\frac 1j\right)^2
\\\leq\frac{2}{cN}\sum_{l=h}^{4cN}\frac 1{l^2}\sum_{k=1}^{4cN}\left(\ln(k+l)-\ln(k)\right)^2
\leq 8\sum_{l=h}^{4cN}\frac {(\ln l)^2}{l^2}\leq 20\frac {(\ln h)^2}{h}.
\end{multline*}
Combining this with the estimate of the variance given in Lemma \ref{lem:variancePoisB} below, we obtain
\begin{equation}
\label{eq:EstTailMain}
\frac 1{cN}\sum_{\substack{k,k+l\in\mathcal{I}^-_N(L,\delta)\\l\geq h}}F_\lambda(k,l)
\leq C_1 e^{C_2|\gamma^+-c^2N|}\frac{(\ln h)^2}h.
\end{equation}

We now turn to estimating the second sum on the right--hand side of \eqref{eq:tailSumBulk+Far}. We will estimate the sum when $k$ is in the left half of the interval $\mathcal{I}^-_N(L,\delta)$, i.e. when $k$ is larger than $c^2N$. The sum when $k<c^2N$ can be estimated completely similarly. Denote 
\begin{equation*}
M_N(L,\delta)=\max\{M:M\in\mathcal{I}^-_N(L,\delta)\}.
\end{equation*}
We have 
\begin{equation*}
M_N(L,\delta)=2cN-LcN^\delta+c^2N
\end{equation*}
and
\begin{equation*}
\dist(k,\mathbb{Z}\backslash\mathcal{I}^-_N(L,\delta))=M_N(L,\delta)-k.
\end{equation*}
Notice that
\begin{equation}
\label{eq:FasLastandDiff}
F_\lambda(k)^2\leq 2(F_\lambda(k)-F_\lambda(M_N(L,\delta)))^2+2F_\lambda(M_N(L,\delta))^2.
\end{equation}
Since $F_\lambda$ is Lipschitz with constant $2$ and $\lambda\in\mathbb{Y}^n_N(L,\delta)$, we have
\begin{equation*}
|F_\lambda(M_N(L,\delta))|\leq 4LN^\delta,
\end{equation*}
which implies
\begin{equation}
\label{eq:FLastSum}
\frac 2{cN}\sum_{k\in\mathcal{I}^-_N(L,\delta)}\frac{F_\lambda(M_N(L,\delta))^2}{M_N(L,\delta)-k}\leq \frac {32L^2N^{2\delta}\ln(4cN)}{cN}.
\end{equation}
Since
% \begin{equation*}
% \frac 2{cN}\sum_{\substack{k\in\mathcal{I}^-_N(L,\delta)\\k>c^2N}}\frac {\left(\sum_{j=k}^{M_N(L,\delta)}\frac{1}{2cN-|j-c^2N|}\right)^2}{{M_N(L,\delta)-k}}
% \leq \frac {2}{cN}\sum_{\substack{k\in\mathcal{I}^-_N(L,\delta)\\k>c^2N}}\frac {(\ln(M_N(L,\delta)-k))^2}{M_N(L,\delta)-k}\leq\frac{2(\ln(4cN))^3}{cN},
% \end{equation*}
\begin{multline*}
\frac 2{cN}\sum_{\substack{k\in\mathcal{I}^-_N(L,\delta)\\k>c^2N}}\frac {\left(\sum_{j=k}^{M_N(L,\delta)}\frac{1}{2cN-|j-c^2N|}\right)^2}{{M_N(L,\delta)-k}}
\\\leq \frac {2}{cN}\sum_{\substack{k\in\mathcal{I}^-_N(L,\delta)\\k>c^2N}}\frac {(\ln(M_N(L,\delta)-k))^2}{M_N(L,\delta)-k}\leq\frac{2(\ln(4cN))^3}{cN},
\end{multline*}
it follows from \eqref{eq:diffFlVar+}, \eqref{eq:FasLastandDiff} and \eqref{eq:FLastSum} that
\begin{multline*}
\frac 2{cN}\sum_{\substack{k\in\mathcal{I}^-_N(L,\delta)\\k>c^2N}}\frac {F_\lambda(k)^2}{M_N(L,\delta)-k}
\\\leq \frac 4{cN}\sum_{\substack{k\in\mathcal{I}^-_N(L,\delta)\\k>c^2N}}\frac{\Var_{\mathbb{P}^{\gamma^+,0}_N}\left(c_k+\dots+c_{M_N(L,\delta)}\right)}{M_N(L,\delta)-k}+\frac {C_1e^{C_2|\gamma^+-c^2N|}\ln(4cN)}{N^{1-2\delta}}.
\end{multline*}
Using the estimate of the variance given in Lemma \ref{lem:variancePoisE} below, we obtain
\begin{equation*}
\frac 2{cN}\sum_{\substack{k\in\mathcal{I}^-_N(L,\delta)\\k>c^2N}}\frac {F_\lambda(k)^2}{M_N(L,\delta)-k}
\leq \frac{C_1e^{C_2|\gamma^+-c^2N|}}{N^\frac 16}.
\end{equation*}
Combining this with \eqref{eq:EstTailMain} we obtain
% \begin{equation*}
% \mathbb{E}_{\mathbb{P}^{\gamma^+,0}_N}\left(\frac 1{\sqrt{n}}\sum_{l=h}^\infty\sum_{k=-\infty}^\infty \left(\frac{F_\lambda^{L,\delta}(k+l)-F_\lambda^{L,\delta}(k)}l\right)^2\right)
% \leq C_1e^{C_2|\gamma^+-c^2N|}\left(\frac 1{N^\frac 16}+\frac{(\ln h)^2}h\right),
% \end{equation*}
\begin{multline*}
\mathbb{E}_{\mathbb{P}^{\gamma^+,0}_N}\left(\frac 1{\sqrt{n}}\sum_{l=h}^\infty\sum_{k=-\infty}^\infty \left(\frac{F_\lambda^{L,\delta}(k+l)-F_\lambda^{L,\delta}(k)}l\right)^2\right)
\\\leq C_1e^{C_2|\gamma^+-c^2N|}\left(\frac 1{N^\frac 16}+\frac{(\ln h)^2}h\right),
\end{multline*}
which implies the Lemma after depoissonization.

$\qed$
\end{proof}

\begin{lemma}
\label{lem:variancePoisB}
Let
\begin{equation*}
V_{\gamma^+,N}^{L,\delta}(h)=\frac 1N \sum_{\substack{k,k+l\in\mathcal{I}^-_N(L,\delta)\\l\geq h}}\frac 1{l^2}\Var_{\mathbb{P}_N^{\gamma^+,0}}\left(c_k+\ldots+c_{k+l-1}\right).
\end{equation*}
For any $\delta>\frac 13$ and $L>0$ there exist constants $C_1>0$ and $C_2>0$ such that for any $h>0$ there exists $N_0$ such that for all $N>N_0$ and all $\gamma^+$ we have
\begin{equation*}
V_{\gamma^+,N}^{L,\delta}(h)\leq C_1 e^{C_2|\gamma^+-c^2N|}\frac{\ln h}h.
\end{equation*}
\end{lemma}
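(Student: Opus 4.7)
The plan is to express the variance via the projection property of the correlation kernel and then carefully bound the resulting double sum. By \eqref{eq:KProj},
\begin{equation*}
\Var_{\mathbb{P}_N^{\gamma^+,0}}(c_k + \cdots + c_{k+l-1}) = \sum_{j \in I} \sum_{y \notin I} K_{N,\gamma^+}(j,y) K_{N,\gamma^+}(y,j),
\end{equation*}
where $I = [k, k+l-1]$ and each summand is nonnegative. Substituting into the definition of $V_{\gamma^+,N}^{L,\delta}(h)$ and exchanging the order of summation gives
\begin{equation*}
V_{\gamma^+,N}^{L,\delta}(h) = \frac{1}{N} \sum_{j,\, y} K_{N,\gamma^+}(j,y) K_{N,\gamma^+}(y,j) \, S(j, y, h),
\end{equation*}
where $S(j,y,h) = \sum 1/l^2$ runs over admissible $(k,l)$. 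A direct combinatorial count shows that for each fixed $l$ the number of valid $k$ is at most $\min(l, |j-y|)$, whence $S(j, y, h) \leq C(1 + \ln(|j-y|/h))$ when $|j-y| \geq h$ and $S(j, y, h) \leq C|j-y|/h$ otherwise.

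Next, I would partition the $y$-sum into three regions. In the bulk region $y \in \mathcal{I}^-_N(L/2, \delta)$, since $\mathcal{I}^-_N(L, \delta) \subset \mathcal{I}^-_N(L/2, \delta)$, Remark \ref{rem:KsqEdge2} gives $|K(j,y)K(y,j)| \leq C_1 e^{C_2|\gamma^+-c^2N|}/(1+|j-y|)^2$. In the near-edge region $y \in \mathcal{I}^+_N(K_1, \delta) \setminus \mathcal{I}^-_N(L/2, \delta)$, which has cardinality $O(N^\delta)$, Lemma \ref{lem:estKBulkEdge} with $\delta_1 = \delta$ yields the weaker bound $|K(j,y)K(y,j)| \leq C_1 e^{C_2|\gamma^+-c^2N|} N^{(5-3\delta)/6}/(1+|j-y|)^2$. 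Crucially, for $j \in \mathcal{I}^-_N(L, \delta) \subset \mathcal{I}^-_N(L/2, \delta)$ and $y$ in this near-edge region, the $L/2 \cdot cN^\delta$-wide ring $\mathcal{I}^-_N(L/2, \delta) \setminus \mathcal{I}^-_N(L, \delta)$ separates $j$ from $y$, so $|j-y| \geq LcN^\delta/2$ for $N$ large. Finally, in the far region $y \notin \mathcal{I}^+_N(K_1, \delta)$, Lemma \ref{lem:estKExtremes} supplies an extra exponentially small factor $e^{-C_3 N^{3\delta/2 - 1/2}}$.

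Combining the pieces is routine. In the bulk region, for each fixed $j$, splitting the $y$-sum at $|j-y| = h$ shows $\sum_y S(j, y, h)/(1+|j-y|)^2 = O(\ln h / h)$; summing over $j \in \mathcal{I}^-_N(L, \delta)$ (of size $O(N)$) and dividing by $N$ produces the target bound $O(\ln h / h)$. In the near-edge region, using the distance bound $|j-y| \geq LcN^\delta/2$ together with $S \leq C(1 + \ln N)$, one obtains a per-$y$ contribution of order $N^{(5-3\delta)/6} \cdot \ln N / N^\delta$; multiplication by the $O(N^\delta)$ size of the near-edge and by $1/N$ yields a total of order $N^{-(1 + 3\delta)/6} \ln N = o(1)$ as $N \to \infty$ for any $\delta > 1/3$. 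The far contribution is exponentially negligible. Putting these together, for any fixed $h > 0$ the bound $V_{\gamma^+,N}^{L,\delta}(h) \leq C_1 e^{C_2|\gamma^+-c^2N|} \ln h/h$ holds once $N$ exceeds some $N_0 = N_0(h)$.

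The main obstacle is the near-edge estimate: the bound from Lemma \ref{lem:estKBulkEdge} carries a polynomially growing factor $N^{(5-3\delta)/6}$, and it is only the combination of this with the $O(N^\delta)$ size of the near-edge set, the $N^{-2\delta}$ decay supplied by the minimum-distance bound squared, and the $1/N$ outer normalization that finally makes the overall exponent negative. Verifying this delicate cancellation is the heart of the argument; everything else reduces to the standard logarithmic decay of bulk covariances encoded in Remark \ref{rem:KsqEdge2} together with careful bookkeeping of the $S(j,y,h)$ factor.
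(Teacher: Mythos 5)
Your overall strategy is the same as the paper's: use the projection identity \eqref{eq:KProj} to write the variance as a two-point sum over $j$ inside and $y$ outside each interval, exchange the order of summation to isolate the combinatorial weight (your $S(j,y,h)$ is exactly the paper's $P^h_N(x,y)$), and then bound the resulting sum in regions using the kernel estimates of Section~\ref{sec:boundForCorrKer}. Your bounds on $S(j,y,h)$ and your handling of the near-edge and far regions match the paper's analysis, and the arithmetic verifying that the near-edge exponent $(5-9\delta)/6 - \dots = -(1+3\delta)/6 < 0$ is correctly carried out.

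There is, however, a genuine gap in your treatment of the ``bulk'' region. You apply Remark~\ref{rem:KsqEdge2} to all pairs $(j,y)$ with $j\in\mathcal{I}^-_N(L,\delta)$ and $y\in\mathcal{I}^-_N(L/2,\delta)$, concluding $|K(j,y)K(y,j)|\le C_1 e^{C_2|\gamma^+-c^2N|}/(1+|j-y|)^2$ uniformly. But Remark~\ref{rem:KsqEdge2} is drawn from \eqref{eq:KsqEdgeSameSign}, which is the \emph{same-sign} case of Lemma~\ref{lem:estKDiffDelta}; it does not cover pairs with $jy<0$ (these occur for $c<2$, when $\mathcal{I}^-_N(L/2,\delta)$ straddles zero). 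For opposite-sign pairs the applicable estimate is \eqref{eq:KsqEdgeOpSign}, which carries an extra factor $N^{1-(\delta_1+\delta_2)/2}$; when one of the two points is near an end of $\mathcal{I}^-_N(L/2,\delta)$ this becomes $N^{1-\delta}$, and the bound you assert is not established. The paper circumvents this by further subdividing the bulk: same-sign pairs in $\mathcal{I}^-_N(L/2,\delta)$ use Remark~\ref{rem:KsqEdge2}; opposite-sign pairs with both points deep in the bulk, i.e.\ in $\mathcal{I}_N(\varepsilon)$, use Lemma~\ref{lem:estKDiffDelta} with $\delta_1=\delta_2=1$ (giving no polynomial loss); and opposite-sign pairs with one point in $\mathcal{I}^-_N(L/2,\delta)\setminus\mathcal{I}_N(\varepsilon)$ use Lemma~\ref{lem:estKDiffDelta} with $\delta_1,\delta_2\le 1$, compensating the $N^{1-\delta}$ loss by the observation that $|x-y|\gtrsim N$ for such pairs. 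Your final estimate still holds once this split is made, but the uniform invocation of Remark~\ref{rem:KsqEdge2} across signs is not justified and must be replaced by the finer case analysis.
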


\begin{proof}
We can assume $h<N^{\delta}$. Throughout the proof, $C_1$ and $C_2$ will denote arbitrary constants that depend only on $L$ and $\delta$.
It is immediate from \eqref{eq:KProj} that 
\begin{equation}
\label{eq:VarSumKK}
\Var_{\mathbb{P}_N^{\gamma^+,0}}\left(c_k+\ldots+c_{k+l-1}\right)=\sum_{x\in[k,k+l-1]}\sum_{y\notin[k,k+l-1]}K_{N,\gamma^+}(x,y)K_{N,\gamma^+}(y,x).
\end{equation}
Summing over $k,k+l\in\mathcal{I}^-_N(L,\delta)$, $l\geq h$, we obtain
\begin{equation*}
V_{\gamma^+,N}^{L,\delta}(h)=\frac 1N\sum_{\substack{k,k+l\in\mathcal{I}^-_N(L,\delta)\\l\geq h}}\sum_{x\in[k,k+l-1]}\sum_{y\notin[k,k+l-1]}\frac{K_{N,\gamma^+}(x,y)K_{N,\gamma^+}(y,x)}{l^2}.
\end{equation*}
Let $P^h_N(x,y)$ be the coefficient of $K_{N,\gamma^+}(x,y)K_{N,\gamma^+}(y,x)$ in the above sum and let 
\begin{equation*}
Q^h_{N,\gamma^+}(x,y)=P^h_N(x,y)K_{N,\gamma^+}(x,y)K_{N,\gamma^+}(y,x).
\end{equation*}
We have
\begin{equation*}
V_{\gamma^+,N}^{L,\delta}(h)=\frac 1N\sum_{\substack{x\in\mathcal{I}^-_N(L,\delta)\\y\in\mathbb{Z}}}Q^h_{N,\gamma^+}(x,y).
\end{equation*}

When $x\in\mathcal{I}^-_N(L,\delta)$ and $y\in\mathcal{I}^+_N(L,\delta)$, estimating from above the number of intervals of length $l\geq h$ that contain $x$ but not $y$, we obtain
\begin{equation*}
P^h_N(x,y)\leq 2\sum_{l=h}^\infty \frac {\min\{|x-y|,l\}}{l^2}\leq \psi(h,|x-y|),
\end{equation*}
where
\begin{equation*}
\psi(h,l)=\begin{cases}
\frac{2l}{h-1},&l\leq h\\
4\ln l,&l>h
\end{cases}.
\end{equation*}

Since for a fixed $l$ the number of pairs $x,y\in\mathcal{I}^-_N(L,\delta)$ such that $|x-y|=l$ is less than $4cN$, it follows from Remark \ref{rem:KsqEdge2} that
\begin{equation}
\label{eq:VEInsideSame}
\frac 1N\sum_{\substack{x\in \mathcal{I}^-_N(L,\delta)\\y\in\mathcal{I}^-_N(\frac L 2,\delta)\\xy>0}} |Q^h_{N,\gamma^+}(x,y)|
\leq \sum_{l=1}^\infty \frac{\psi(h,l) C_1e^{C_2|\gamma^+-c^2N|}}{(1+l)^2}
\leq C_1e^{C_2|\gamma^+-c^2N|}\frac{\ln h}h.
\end{equation}

Similarly, it follows from Lemma \ref{lem:estKDiffDelta} with $\delta_1=\delta_2=1$ that for any $\varepsilon>0$, 
\begin{equation}
\label{eq:VEBulk}
\frac 1N\sum_{\substack{x,y\in \mathcal{I}_N(\varepsilon)}} |Q^h_{N,\gamma^+}(x,y)|\leq C_1e^{C_2|\gamma^+-c^2N|}\frac{\ln h}h.
\end{equation}

If $x\in \mathcal{I}^-_N(L,\delta)$, $y\in\mathcal{I}^-_N(\frac L 2,\delta)\backslash\mathcal{I}_N(\varepsilon)$, and $x$ and $y$ have opposite signs, then $\frac {2cN}3\leq|x-y|\leq 4cN$, whence Lemma \ref{lem:estKDiffDelta} implies
\begin{equation*}
|Q^h_{N,\gamma^+}(x,y)|\leq C_1e^{C_2|\gamma^+-c^2N|}\frac{N^{1-\delta}\ln N}{N^2}.
\end{equation*}
Since the cardinality of the set $\mathcal{I}^-_N(L,\delta)\times(\mathcal{I}^-_N(\frac L 2,\delta)\backslash\mathcal{I}_N(\varepsilon))$ is less than $16c^2N^2$, we obtain
\begin{equation}
\label{eq:VEInsideDiff}
\frac 1N\sum_{\substack{x\in \mathcal{I}^-_N(L,\delta)\\y\in\mathcal{I}^-_N(\frac L 2,\delta)\backslash\mathcal{I}_N(\varepsilon)\\xy<0}} |Q^h_{N,\gamma^+}(x,y)|
\leq C_1e^{C_2|\gamma^+-c^2N|}\frac{\ln N}{N^\delta}.
\end{equation}

If $x\in \mathcal{I}^-_N(L,\delta)$ and $y\in\mathcal{I}^+_N(\frac L 2,\delta)\backslash\mathcal{I}^-_N(\frac L 2,\delta)$, then $\frac 12 cLN^\delta<|x-y|< 5cN$, whence Lemma \ref{lem:estKBulkEdge} implies
\begin{equation*}
|Q^h_{N,\gamma^+}(x,y)|\leq C_1e^{C_2|\gamma^+-c^2N|}\frac{N^{\frac{5-3\delta}6}\ln N}{(1+|x-y|)^2}.
\end{equation*}
Since the cardinality of $\mathcal{I}^+_N(\frac L 2,\delta)\backslash\mathcal{I}^-_N(\frac L 2,\delta)$ is less than $LcN^\delta$ and for a fixed $$y\in\mathcal{I}^+_N(\frac L 2,\delta)\backslash\mathcal{I}^-_N(\frac L 2,\delta)$$ we have 
\begin{equation*}
\displaystyle\sum_{x\in \mathcal{I}^-_N(L,\delta)}\frac 1{(1+|x-y|)^2}\leq \frac 2{LcN^{\delta}},
\end{equation*}
we obtain
\begin{equation}
\label{eq:VEEdge}
\frac 1N\sum_{\substack{x\in \mathcal{I}^-_N(L,\delta)\\y\in\mathcal{I}^+_N(\frac L 2,\delta)\backslash\mathcal{I}^-_N(\frac L 2,\delta)}} |Q^h_{N,\gamma^+}(x,y)|
\leq C_1e^{C_2|\gamma^+-c^2N|}\frac{\ln N}{N^{\frac 16+\frac \delta 2}}.
\end{equation}

When $x\in\mathcal{I}^-_N(L,\delta)$ and $y\notin\mathcal{I}^+_N(L,\delta)$, summing over all subintervals of $\mathcal{I}^-_N(L,\delta)$ of length at least $h$, we obtain
\begin{equation*}
P^h_N(x,y)\leq\sum_{l=h}^{|\mathcal{I}^-_N(L,\delta)|}\frac{|\mathcal{I}^-_N(L,\delta)|-l}{l^2}\leq \frac N{h-1}.
\end{equation*}
Using Lemma \ref{lem:estKExtremes} to estimate $|K_{N,\gamma^+}(x,y)K_{N,\gamma^+}(y,x)|$, we obtain
\begin{equation}
\label{eq:VEExtremes}
\frac 1N\sum_{\substack{x\in \mathcal{I}^-_N(L,\delta)\\y\notin \mathcal{I}^+_N(L,\delta)}} |Q^h_{N,\gamma^+}(x,y)|
\leq C_1e^{C_2|\gamma^+-c^2N|}\frac{e^{-C_3N^{\frac {3\delta}2-\frac 12}}}{(1+|x-y|)^2}.
\end{equation}
Combining the estimates \eqref{eq:VEInsideSame}, \eqref{eq:VEBulk}, \eqref{eq:VEInsideDiff}, \eqref{eq:VEEdge} and \eqref{eq:VEExtremes} completes the proof.

$\qed$
\end{proof}

\begin{lemma}
\label{lem:variancePoisE}
For any $\delta>\frac 13$ and $L>0$ there exist constants $C_1>0$ and $C_2>0$ such that for any $h>0$ there exists $N_0$ such that for all $N>N_0$ and all $\gamma^+$ we have
\begin{equation*}
\frac 2{cN}\sum_{\substack{k\in\mathcal{I}^-_N(L,\delta)\\k>c^2N}}\frac{\Var_{\mathbb{P}^{\gamma^+,0}_N}\left(c_k+\dots+c_{M_N(L,\delta)}\right)}{M_N(L,\delta)-k}
\leq \frac{C_1e^{C_2|\gamma^+-c^2N|}}{N^{\frac 16}}.
\end{equation*}
\end{lemma}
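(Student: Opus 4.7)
The plan closely mirrors the proof of Lemma \ref{lem:variancePoisB}. Setting $M=M_N(L,\delta)$ for brevity, the projection identity \eqref{eq:KProj} yields
\begin{equation*}
\Var_{\mathbb{P}_N^{\gamma^+,0}}(c_k+\dots+c_M)=\sum_{x\in[k,M]}\sum_{y\notin[k,M]}K_{N,\gamma^+}(x,y)K_{N,\gamma^+}(y,x),
\end{equation*}
so after swapping the order of summation, the quantity to be bounded becomes
\begin{equation*}
\frac{2}{cN}\sum_{x,y}K_{N,\gamma^+}(x,y)K_{N,\gamma^+}(y,x)\,P(x,y),
\end{equation*}
where $P(x,y)$ is the weighted count $\sum_k(M-k+1)^{-1}$ over $k\in(c^2N,M]\cap\mathcal{I}^-_N(L,\delta)$ satisfying $x\in[k,M]$ and $y\notin[k,M]$.

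A direct count produces three regimes: $P(x,y)\le\ln(2cN/(M-x+1))$ when $y>M$ or $y\le c^2N$ (the constraint on $y$ is then automatic); $P(x,y)\le\ln((M-y+1)/(M-x+1))$ when $c^2N<y<x\le M$ (one needs $k>y$); and $P(x,y)=0$ in the remaining case $x\le y\le M$. The kernel factor is then controlled depending on the location of $y$: when $y\in\mathcal{I}^-_N(L,\delta)$, Remark \ref{rem:KsqEdge2} gives $|K(x,y)K(y,x)|\le C_1 e^{C_2|\gamma^+-c^2N|}(1+|x-y|)^{-2}$; when $y\in\mathcal{I}^+_N(L,\delta)\setminus\mathcal{I}^-_N(L,\delta)$, Lemma \ref{lem:estKBulkEdge} supplies the same shape with an extra factor $N^{(5-3\delta)/6}$; and when $y\notin\mathcal{I}^+_N(L,\delta)$, Lemma \ref{lem:estKExtremes} provides stretched-exponential decay, making that contribution negligible.

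Summing region by region, the contribution from $c^2N<y<x\le M$ is bounded, after substituting $u=M-x+1$, $w=(M-y+1)-u$ and using $\ln(1+w/u)\le w/u$, by a constant times $e^{C_2|\gamma^+-c^2N|}(\ln N)^2/N$. The contribution from $y\le c^2N$ is handled similarly, with the additional separation $|x-y|\ge x-c^2N$ making the $y$-sum small. The dominant term comes from the right-edge band $M<y\in\mathcal{I}^+_N(L,\delta)$: summing $\sum_{y>M}(1+y-x)^{-2}\le(M-x+1)^{-1}$ in $y$ and then $\sum_{u\ge 1}u^{-1}\ln(2cN/u)\le(\ln N)^2$ in $u=M-x+1$ gives a bound of order $e^{C_2|\gamma^+-c^2N|}N^{-(1+3\delta)/6}(\ln N)^2$. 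Since $\delta>\frac 13$ forces $(1+3\delta)/6>\frac 13>\frac 16$, this is $o(N^{-1/6})$ for large $N$, and the other regimes are even smaller, yielding the required estimate. The main technical obstacle is the right-edge band: the penalty $N^{(5-3\delta)/6}$ from Lemma \ref{lem:estKBulkEdge} has to be absorbed by combining the geometric decay $(1+|x-y|)^{-2}$ with the logarithmic growth of $P(x,y)$ near $x=M$, and it is precisely the slack $\delta>\frac 13$ that allows the final exponent $-(1+3\delta)/6$ to beat $-\frac 16$ after the $(\ln N)^2$ factor is accounted for.
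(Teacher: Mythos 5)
Your proof is essentially the paper's: both start from the projection identity \eqref{eq:KProj} to write the variance as $\sum_{x\in[k,M]}\sum_{y\notin[k,M]}K_{N,\gamma^+}(x,y)K_{N,\gamma^+}(y,x)$, interchange the sums to isolate a weight (your $P(x,y)$, the paper's $R^h_N(x,y)$), bound the weight logarithmically, and then bound the kernel product by splitting the range of $y$ and invoking Lemma \ref{lem:estKBulkEdge} in the edge region and Lemma \ref{lem:estKExtremes} outside $\mathcal{I}^+_N(L,\delta)$. One small inaccuracy: Remark \ref{rem:KsqEdge2} is stated only for $x$ and $y$ of the same sign, so when $c<2$ the negative $y\in\mathcal{I}^-_N(L,\delta)$ call for the opposite-sign bound \eqref{eq:KsqEdgeOpSign} with its extra $N^{1-\delta}$ factor; but for such $y$ one has $|x-y|\gtrsim N$, which makes that piece $O(N^{-\delta}\ln N)=o(N^{-1/6})$, so the conclusion is unaffected.
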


\begin{proof}
Using \eqref{eq:VarSumKK} we can write the sum of the variance in the form
\begin{equation*}
\frac 2{cN}\sum_{\substack{k\in\mathcal{I}^-_N(L,\delta)\\k>c^2N}}\frac{\Var_{\mathbb{P}^{\gamma^+,0}_N}\left(c_k+\dots+c_{M_N(L,\delta)}\right)}{M_N(L,\delta)-k}=\frac 2{cN}\sum_{\substack{x\in\mathcal{I}^-_N(L,\delta)\\x>c^2N\\y\notin[x,M_N(L,\delta)]}}S^h_{N,\gamma^+}(x,y),
\end{equation*}
where
\begin{equation*}
S^h_{N,\gamma^+}(x,y)=R^h_N(x,y)K_{N,\gamma^+}(x,y)K_{N,\gamma^+}(y,x)
\end{equation*}
and
\begin{equation*}
R^h_N(x,y)=
\begin{cases}
\sum\limits_{k=y}^{x-1} \frac 1{M_N(L,\delta)-k},&y\in(c^2N,x)\\
\sum\limits_{k=c^2N}^{x-1} \frac 1{M_N(L,\delta)-k},&y<c^2N\text{ or }y>M_N(L,\delta)
\end{cases}.
\end{equation*}
Since
\begin{equation*}
R^h_N(x,y)\leq
\frac{|x-y|}{M_N(L,\delta)-x}\text{ if }y\in\mathcal{I}^+_N(L,\delta),
\end{equation*}
it follows from Lemma \ref{lem:estKBulkEdge} that
\begin{equation}
\label{eq:varPoisEM}
\frac 2{cN}\sum_{\substack{x\in\mathcal{I}^-_N(L,\delta)\\y\in\mathcal{I}^+_N(L,\delta)}}S^h_{N,\gamma^+}(x,y)
\leq C_1e^{C_2|\gamma^+-c^2N|}\frac{(\ln M_N(L,\delta))^2}{N^{\frac 16+\frac \delta 2}}
\leq \frac{C_1e^{C_2|\gamma^+-c^2N|}}{N^{\frac 16}}.
\end{equation}
Since
\begin{equation*}
R^h_N(x,y)\leq
\frac{4cN}{M_N(L,\delta)-x}\quad\text{ if }y\notin\mathcal{I}^+_N(L,\delta),
\end{equation*}
it follows from Lemma \ref{lem:estKExtremes} that
\begin{equation}
\label{eq:varPoisEE}
\frac 2{cN}\sum_{\substack{x\in\mathcal{I}^-_N(L,\delta)\\y\notin\mathcal{I}^+_N(L,\delta)}}S^h_{N,\gamma^+}(x,y)
\leq C_1e^{C_2|\gamma^+-c^2N|}e^{-C_3N^{\frac {3\delta}2-\frac 12}}.
\end{equation}
Combining the estimates \eqref{eq:varPoisEM} and \eqref{eq:varPoisEE} completes the proof.

$\qed$
\end{proof}

% \begin{lemma}
% \label{lem:variance}
% For any $\delta>\frac 13$ and for any $h>0$ there exists $N_0$ such that for all $L>0$, all $N>N_0$ and all $\gamma^+$ we have
% \begin{equation*}
% \frac 1N \sum_{\substack{k,k+l\in\mathcal{I}^-_N(L,\delta)\\l\geq h}}\frac 1{l^2}Var_{\mathbb{P}_N^n}\left(c_k+\ldots+c_{k+l-1}\right)
% \leq C_1\frac{\ln h}h e^{C_2|\gamma^+-c^2N|}.
% \end{equation*}
% \end{lemma}
% \begin{proof}
% This follows immediately from Lemma \ref{lem:variancePois} by depoissonizing.
% 
% $\qed$
% \end{proof}

\bibliography{mybib}
\bibliographystyle{alpha}

\end{document}